
\NeedsTeXFormat{LaTeX2e}

\documentclass{amsart}

\usepackage[all]{xy}
\usepackage{amsfonts}
\usepackage{amssymb}

\numberwithin{equation}{section}

\theoremstyle{plain}

\newtheorem{theorem}[subsection]{Theorem}
\newtheorem{conjecture}[subsection]{Conjecture}
\newtheorem{proposition}[subsection]{Proposition}
\newtheorem{lemma}[subsection]{Lemma}
\newtheorem{corollary}[subsection]{Corollary}

\newtheorem{case}{Case}

\theoremstyle{definition}

% Redefinitions
\renewcommand{\leq}{\leqslant}
\renewcommand{\geq}{\geqslant}

\renewcommand{\P}{\mathbb{P}}
\renewcommand{\Im}{\mathop{\rm Im}\nolimits}

% Extra commands

% Shorthand

\newcommand{\wh}{\widehat}
\newcommand{\E}{\mathbb{E}}
\newcommand{\Z}{\mathbb{Z}}

\newcommand{\C}{\mathbb{C}}
\newcommand{\F}{\mathbb{F}}

\begin{document}

\title{Roth's theorem in $\Z_4^n$}

\author{T. Sanders}
\address{Department of Pure Mathematics and Mathematical Statistics\\
University of Cambridge\\
Wilberforce Road\\
Cambridge CB3 0WA\\
England } \email{t.sanders@dpmms.cam.ac.uk}

\begin{abstract}
We show that if $A \subset \Z_4^n$ contains no three-term arithmetic progressions in which all the elements are distinct then $|A|=o(4^n/n)$.
\end{abstract}

\maketitle

\section{Introduction}

Suppose that $G$ is a finite abelian group. A three-term arithmetic progression in $G$ is a triple $(x,x+d,x+2.d)$ with $x,d \in G$; a proper progression is one in which all the elements are different i.e. $2.d \neq 0_G$.

In \cite{KFRPre,KFR} Roth famously proved that any subset of $\Z/N\Z$ of sufficiently large density contains a proper three-term arithmetic progression, a result which was generalised by Meshulam in \cite{RM}.
\begin{theorem}[Meshulam]\label{thm.meshulam}
Suppose that $G$ is a finite abelian group of odd order and $A \subset G$ contains no proper three-term arithmetic progressions. Then 
\begin{equation*}
|A|=O(|G|/\log^{\Omega(1)}|G|).
\end{equation*}
\end{theorem}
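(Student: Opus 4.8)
The plan is to run Roth's Fourier-analytic density-increment argument, with the one twist that the increment will be harvested on a genuine \emph{subgroup} of $G$ rather than on a Bohr set; the hypothesis that $|G|$ is odd is exactly what makes this work, since it forces $d \mapsto 2.d$ to be a bijection of $G$ and so turns ``$A$ contains no proper progression'' into the cleaner statement that $1_A(x)1_A(x+d)1_A(x+2.d) = 0$ whenever $d \neq 0_G$. Write $\alpha := |A|/|G|$, let $\wh{G}$ be the dual group, and normalise $\wh{1_A}(\gamma) = |G|^{-1}\sum_x 1_A(x)\overline{\gamma(x)}$, so that $\wh{1_A}(0) = \alpha$ and $\sum_\gamma |\wh{1_A}(\gamma)|^2 = \alpha$ by Parseval. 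Expanding $1_A$ into characters, the number $T$ of pairs $(x,d) \in G^2$ with $x, x+d, x+2.d \in A$ satisfies
\[
T \;=\; |G|^2 \sum_{\gamma \in \wh{G}} \wh{1_A}(\gamma)^2\, \wh{1_A}(-2.\gamma),
\qquad\text{hence}\qquad
\sum_{\gamma \neq 0} \wh{1_A}(\gamma)^2\, \wh{1_A}(-2.\gamma) \;=\; \frac{\alpha}{|G|} - \alpha^3,
\]
the second equation coming from $T = |A| = \alpha|G|$ (the only surviving pairs have $d = 0$) after isolating the $\gamma = 0$ term. Bounding the left-hand side by $\bigl(\max_{\gamma \neq 0}|\wh{1_A}(\gamma)|\bigr)\sum_\gamma|\wh{1_A}(\gamma)|^2$ --- using that $\gamma \mapsto -2.\gamma$ permutes $\wh{G}$ and fixes $0$, together with Parseval --- then gives $\max_{\gamma \neq 0}|\wh{1_A}(\gamma)| \geq \alpha^2 - |G|^{-1} \geq \tfrac12\alpha^2$, the last step being automatic once $|G| \geq 2\alpha^{-2}$, which I may assume since the goal is $\alpha^{-1} = O(\log|G|)$.

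Next I would convert this large Fourier coefficient into a density increment on a subgroup. Choose $\gamma_0 \neq 0$ attaining the bound, let $r$ be its order --- a divisor of $|G|$, hence odd and $\geq 3$ --- and put $K := \Ker\gamma_0$, so that $[G:K] = r$ and $|K|$ is odd. For each $r$-th root of unity $\zeta$ the fibre $\gamma_0^{-1}(\zeta)$ is a coset of $K$; letting $\nu_\zeta$ denote the density of $A$ on that coset, one has $\E_\zeta\,\nu_\zeta = \alpha$ and, since $\sum_\zeta \zeta^{-1} = 0$,
\[
\Bigl|\sum_\zeta (\nu_\zeta - \alpha)\zeta^{-1}\Bigr| \;=\; r\,|\wh{1_A}(\gamma_0)| \;\geq\; \tfrac12 r\alpha^2,
\qquad\text{so}\qquad
\sum_\zeta |\nu_\zeta - \alpha| \;\geq\; \tfrac12 r\alpha^2 .
\]
Because the positive and negative parts of $\nu_\zeta - \alpha$ have equal total mass, some $\zeta$ has $\nu_\zeta \geq \alpha + \tfrac14\alpha^2$; translating that coset onto $K$ itself, I obtain a subgroup $K \leq G$ of odd order with $|K| \leq |G|/3$ inside which a translate of $A$ still contains no proper progression and has density at least $\alpha\bigl(1 + \tfrac14\alpha\bigr)$.

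Finally I would iterate, starting from $(G_0,A_0) = (G,A)$: this produces a nested chain $G_0 \supseteq G_1 \supseteq \cdots$ of odd-order subgroups with $|G_{i+1}| \leq |G_i|/3$ and translates $A_i \subseteq G_i$ with no proper progression and densities $\alpha_{i+1} \geq \alpha_i(1 + \tfrac14\alpha_i)$. The recursion forces $\alpha_i^{-1}$ to fall by a definite amount at each step, so within $O(\alpha^{-1})$ steps one would have $\alpha_i > 1$, which is absurd; the only way out is that the construction already broke down because some $|G_i|$ fell below the threshold $2\alpha_i^{-2} \leq 2\alpha^{-2}$ needed for the Fourier step --- but $|G_i| \geq |G|\,3^{-i} \geq |G|\,3^{-O(\alpha^{-1})}$, so if $\alpha^{-1}$ were much smaller than $\log|G|$ this threshold would never be reached. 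Hence $\alpha^{-1} = O(\log|G|)$, i.e. $|A| = O(|G|/\log|G|)$, which is stronger than the stated bound. The step I expect to be the real crux --- and the only one that genuinely uses oddness of $|G|$ --- is extracting the increment on a subgroup whose index is bounded below by $3$: it is precisely this uniform gap that keeps the number of iterations $O(\alpha^{-1})$, and it would fail completely for a group such as $\Z_2^n$, where an entirely different approach is needed. Everything else is routine Fourier bookkeeping together with tracking the two stopping conditions.
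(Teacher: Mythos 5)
Your Fourier bookkeeping (the identity for $T$, the lower bound $\max_{\gamma\neq 0}|\wh{1_A}(\gamma)|\geq \tfrac12\alpha^2$, and the conversion of a large coefficient into a density increment of $\tfrac14\alpha^2$ on a coset of $\Ker\gamma_0$) is fine, but the iteration at the end has a genuine gap, and it is exactly the point where ``odd order'' differs from ``bounded exponent''. At each step you pass to $K=\Ker\gamma_0$, whose index is $r$, the order of the chosen character. The hypothesis only gives $r\geq 3$; there is no \emph{upper} bound on $r$ in a general odd-order group, so your claimed lower bound $|G_i|\geq |G|\,3^{-i}$ does not follow --- from $[G_i:G_{i+1}]\geq 3$ you only get $|G_i|\leq |G|\,3^{-i}$, which is the wrong direction, and what the argument actually needs is $|G_{i+1}|\geq |G_i|/r_i$ with $r_i$ bounded. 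The fatal test case is $G=\Z/N\Z$ with $N$ an odd prime: every non-trivial character has trivial kernel, so after a single step $K=\{0\}$ and the iteration dies immediately, having produced nothing. Thus your argument, as written, proves the Roth--Meshulam bound $|A|=O(|G|/\log|G|)$ only for odd-order groups of \emph{bounded exponent} (where $r=O(1)$ and the subgroup increment is legitimate); this is precisely the content of Lemma \ref{lem.rmit} in the paper, which explicitly assumes bounded exponent in order to return a subgroup of bounded index.

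To get Theorem \ref{thm.meshulam} in the stated generality one cannot rely on subgroup increments alone: for groups with a large cyclic factor (in particular $\Z/N\Z$ itself) there simply are no subgroups of small index to increment onto, and one must instead run Roth's original density increment on Bohr sets / subprogressions, or, as in Meshulam's treatment, reduce the general odd-order group to a product situation and invoke the known $\Z/N\Z$ bounds (Roth, Heath-Brown, Szemer{\'e}di, Bourgain --- which is where the unspecified $\log^{\Omega(1)}$ exponent comes from). Note also that the paper does not prove this theorem; it cites it, and the sketch it does give (Lemmas \ref{lem.rmit} and \ref{lem.itrm}) is deliberately restricted to the bounded-exponent setting for exactly the reason your proof breaks. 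So the fix is not cosmetic: you need either the Bohr-set machinery for the nearly-cyclic case, or a reduction that splits off cyclic factors and treats them with the integer Roth theorem, combined with your subgroup argument for the remaining bounded-exponent part.
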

An explicit value for the $\Omega(1)$ constant can be read out of the proof, and it seems that in light of the recent work of Bourgain \cite{JBRoth2} (itself improving on \cite{JB,ES} and \cite{DRHB}) one could probably take any constant strictly less than $2/3$. While this appears to be the limit in general, for certain groups one can do better. Indeed, for\footnote{Or, more generally, any abelian group of odd order and bounded exponent.} $\Z_3^n$ Roth's original argument simplifies considerably to give the following result which is qualitatively due to Brown and Buhler \cite{TCBJPB}.
\begin{theorem}[Roth-Meshulam] Suppose that $G=\Z_3^n$ and $A \subset G$ contains no proper three-term arithmetic progressions. Then
\begin{equation*}
|A|=O(|G|/\log |G|).
\end{equation*}
\end{theorem}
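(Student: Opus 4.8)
The plan is a standard Fourier-analytic density-increment argument. Write $N = |G| = 3^n$ and use the normalisation $\wh{f}(\gamma) = \E_{x \in G} f(x)\overline{\gamma(x)}$ for $f : G \to \C$ and $\gamma \in \wh{G}$. The key observation is that in $\Z_3^n$ a triple $(a,b,c) \in G^3$ is a three-term progression if and only if $a + b + c = 0$, and such a progression is improper precisely when $a = b = c$; there are exactly $|A|$ of the latter inside $A^3$. So if $A$ has density $\alpha = |A|/N$ and contains no proper progression, then applying Fourier inversion to $1_A \ast 1_A \ast 1_A(0)$ gives
\[ \alpha N = \#\{(a,b,c) \in A^3 : a + b + c = 0\} = N^2 \sum_{\gamma \in \wh{G}} \wh{1_A}(\gamma)^3 = N^2 \alpha^3 + N^2 \sum_{\gamma \neq 0} \wh{1_A}(\gamma)^3. \]

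First I would isolate the non-trivial Fourier mass. Rearranging and using Parseval ($\sum_\gamma |\wh{1_A}(\gamma)|^2 = \alpha$) gives $\sup_{\gamma \neq 0} |\wh{1_A}(\gamma)| \geq \alpha^{-1}\bigl|\alpha^3 - \alpha/N\bigr|$. If $\alpha^2 < 2/N$ then $|A| < \sqrt{2N} = o(N/n)$ and we are done; otherwise this supremum is at least $\alpha^2/2$, attained at some non-zero $\gamma_0$. The character $\gamma_0$ is trivial on an index-$3$ subgroup $H \cong \Z_3^{n-1}$ and partitions $G$ into the three cosets $V_0, V_1, V_2$ of $H$. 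Writing $a_j$ for the density of $A$ in $V_j$, so $a_0 + a_1 + a_2 = 3\alpha$, the bound on $|\wh{1_A}(\gamma_0)|$ becomes $|a_0 + a_1\omega + a_2\omega^2| \geq 3\alpha^2/2$ with $\omega = e^{2\pi i/3}$; since $\sum_j (a_j - \alpha)\omega^j$ has the same modulus and $\sum_j(a_j - \alpha) = 0$, an elementary estimate (at most two of the $a_j$ exceed the mean) yields $\max_j a_j \geq \alpha + \alpha^2/4$.

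Now I would iterate. Translating so that the densest coset becomes $H$ itself — translation preserves the progression-free property — we get $A_1 \subseteq \Z_3^{n-1}$ with no proper progression and density $\alpha_1 \geq \alpha + \alpha^2/4 \geq \alpha$. Repeating produces $A_k \subseteq \Z_3^{n-k}$ with $\alpha_{k+1} \geq \alpha_k + \alpha_k^2/4$, and since each $\alpha_k \geq \alpha$ the quantity $1/\alpha_k$ decreases by at least $1/8$ per step, so the process runs for fewer than $8/\alpha$ steps. It can only stop when the hypothesis $\alpha_k^2 \geq 2/3^{n-k}$ fails, i.e. when $3^{n-k} = O(1/\alpha^2)$; combined with $k < 8/\alpha$ this forces $n = O(1/\alpha)$, that is $\alpha = O(1/n)$, hence $|A| = \alpha N = O(N/n) = O(|G|/\log|G|)$.

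I expect the only genuine content to be the density-increment step — converting a single large non-trivial Fourier coefficient into an honest density gain of order $\alpha^2$ on a codimension-one coset — together with keeping the bookkeeping of the iteration and the small-$|A|$ base case straight; everything else is routine manipulation of the Fourier transform on $\Z_3^n$.
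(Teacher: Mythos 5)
Your proposal is correct and is essentially the argument the paper has in mind: the same Fourier count of solutions to $a+b+c=0$, the same passage from a large non-trivial coefficient to a density increment of order $\alpha^2$ on an index-$3$ subgroup (the paper's Lemmas \ref{lem.rmit} and \ref{lem.itrm}), and the same $O(\alpha^{-1})$-step iteration sketched in \S\ref{sec.out}. The bookkeeping (the $\alpha^2/4$ increment, the $1/\alpha_k$ decrement, and terminating when $\alpha_k^2 < 2\cdot 3^{-(n-k)}$ to force $n=O(1/\alpha)$) all checks out.
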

The question of what the true bounds on $|A|$ are arises in many different studies (see \cite{PMRLGVR,SYID,YE} and \cite{YECEAGSKLR}) and improving the bound is a well known open problem as reported in \cite{BJGFFM,ESCVFL,TCTBlog07}; the closest anyone has come is in \cite{ESC3,ESC2}. While we are not able to make progress on this question it is the purpose of this paper to show an improvement for a different class of groups.

It was quite natural in Theorem \ref{thm.meshulam} to insist that $G$ be of odd order: in the group $\Z_2^n$ every arithmetic progression is easily seen to be of the form $(x,y,x)$, so no set contains a proper progression. Not all groups of even order are as trivial as $\Z_2^n$ and, as part of a more general corpus of results, Lev resolved the question of which abelian groups Meshulam's theorem could be extended to in \cite{VFL}.
\begin{theorem}[Lev]\label{thm.lev}
Suppose that $G=\Z_4^n$ and $A \subset G$ contains no proper three-term arithmetic progressions. Then
\begin{equation*}
|A|=O(|G|/\log |G|).
\end{equation*}
\end{theorem}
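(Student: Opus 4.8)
The plan is to run the Roth--Meshulam density-increment argument, but arranged so that each step passes to a subgroup of \emph{index exactly $2$}; since $\Z_4^n$ carries a chain of such subgroups of length $n$, the density can increase at most $O(n)$ times before saturating, and this forces it to be $O(1/n)=O(1/\log|G|)$ from the outset.

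Write $H=2G=\{0,2\}^n$, a subgroup of order $2^n$, and set $\alpha=|A|/|G|$. We may assume $|A|\geq 2^{n+1}=2|H|$, since otherwise $|A|<2^{n+1}=o(4^n/n)$ and there is nothing to prove. Let $N(A)$ count the pairs $(x,d)$ with $x,x+d,x+2d\in A$, and let $N_0(A)=\#\{(x,d):x,x+d\in A,\ 2d=0\}=\sum_{d\in H}|A\cap(A-d)|$ count the improper ones; we argue by contradiction, assuming $N(A)-N_0(A)=0$. A Fourier expansion with $\widehat f(\gamma)=\sum_x f(x)\overline{\gamma(x)}$ yields
\[
N(A)=\frac1{|G|}\sum_{\gamma\in\widehat G}\widehat{1_A}(\gamma)^2\widehat{1_A}(-2\gamma),\qquad N_0(A)=\frac{|H|}{|G|}\sum_{\gamma\in\Gamma_0}\widehat{1_A}(\gamma)^2,
\]
where $\Gamma_0=\{\gamma:2\gamma=0\}$ is the group of characters of order $\leq2$, of size $|G|/|H|=2^n$. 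Each $\gamma\in\Gamma_0$ is real, so the terms $\widehat{1_A}(\gamma)^2=|\widehat{1_A}(\gamma)|^2$ are nonnegative and $\sum_{\gamma\in\Gamma_0}\widehat{1_A}(\gamma)^2\geq\widehat{1_A}(0)^2=|A|^2$.

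Subtracting, and splitting the sum for $N(A)$ by whether $2\gamma=0$, gives
\[
0=N(A)-N_0(A)=\frac{|A|-|H|}{|G|}\sum_{\gamma\in\Gamma_0}\widehat{1_A}(\gamma)^2+\frac1{|G|}\sum_{2\gamma\neq0}\widehat{1_A}(\gamma)^2\widehat{1_A}(-2\gamma).
\]
By $|A|\geq2|H|$ and the preceding inequality, the first term is at least $\frac{|A|-|H|}{|G|}|A|^2\geq\frac{|A|^3}{2|G|}$. In the second sum, as $\gamma$ ranges over characters with $2\gamma\neq0$ the element $-2\gamma$ ranges over the nonzero elements of $2\widehat G$, a nonzero subgroup of $\Gamma_0$; so by Parseval ($\sum_\gamma|\widehat{1_A}(\gamma)|^2=|G|\,|A|$) its modulus is at most $|A|\max_{0\neq\theta\in2\widehat G}|\widehat{1_A}(\theta)|$. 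Hence there is a nonzero $\theta\in2\widehat G$ (necessarily of order $2$) with
\[
|\widehat{1_A}(\theta)|\geq\frac{|A|^2}{2|G|}=\frac{\alpha^2}{2}|G|.
\]
The subgroup $G'=\ker\theta$ has index $2$, and translating $A$ into whichever coset of $G'$ carries the larger share of $A$ produces $A'\subseteq G'$ with no proper progression and density $|A'|/|G'|\geq\alpha+\tfrac12\alpha^2$. Because $\theta\in2\widehat G$ it is trivial on $\{g\in G:2g=0\}$, and from this one checks that $G'\cong\Z_4^{n-1}\times\Z_2$; more generally the same argument in any group $\Z_4^r\times\Z_2^s$ with $r\geq1$ returns an index-$2$ subgroup isomorphic to $\Z_4^{r-1}\times\Z_2^{s+1}$. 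Along such a chain $|G/2G|$ stays equal to $2^n$, so the standing hypothesis (which amounts to $\alpha\geq2^{1-n}$) persists as the density only grows. Iterating, we obtain $A=A_0,A_1,\dots$ with $A_k\subseteq G_k\cong\Z_4^{n-k}\times\Z_2^{k}$ for $0\leq k\leq n-1$ and $\alpha_{k+1}\geq\alpha_k+\tfrac12\alpha_k^2$. This recursion pushes the density above $1$ within $O(1/\alpha_0)$ steps; were $\alpha_0\gg1/n$ this would already occur among the $n$ legitimate steps $k=0,\dots,n-1$, contradicting $\alpha_k\leq1$. Therefore $\alpha_0=O(1/n)$, i.e. $|A|=O(|G|/\log|G|)$.

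The one genuinely non-formal point — and the reason $\Z_4^n$ is easier than a general group of even order — is the treatment of the improper progressions: there can be as many as $2^n|A|$ of them, far too many to discard, but they are detected only by the order-$\leq2$ characters $\Gamma_0$, which is exactly where the main term of $N(A)$ also sits; writing both through $\sum_{\gamma\in\Gamma_0}\widehat{1_A}(\gamma)^2$ is what lets the improper count be absorbed rather than swamp the estimate. The other thing to get right is the bookkeeping that the increment really lands on an index-$2$ subgroup of the same shape, so that the chain has full length $n$; the remainder is the standard Meshulam iteration.
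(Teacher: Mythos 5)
Your argument is correct, and its engine is exactly the one the paper runs — the Fourier identity for the progression count, Lev's positivity observation that the order-$\leq 2$ characters contribute nonnegative squares, an index-$2$ density increment at a character where the set is biased, and iteration until the density saturates — but the packaging is genuinely different. The paper first proves the unconditional counting bound $\Lambda(A)\geq\exp(-O(\alpha^{-1}))$ (Theorem \ref{thm.weak}) and then deduces Theorem \ref{thm.lev} by comparing $\Lambda(A)|G|^2$ with the number $T(G)=|G|^{3/2}+O(|G|)$ of improper progressions, as in \S\ref{sec.anal}. You instead stay inside the hypothesis ``no proper progressions'' throughout and cancel the improper count \emph{exactly} on the Fourier side: since improper progressions are detected precisely by the real characters, $N(A)-N_0(A)$ has main term $\frac{|A|-2^n}{|G|}\sum_{2\gamma=0}\wh{1_A}(\gamma)^2$, which is where your (harmless) reduction to $|A|\geq 2^{n+1}$ enters. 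This buys a self-contained contradiction argument with no separate counting theorem, at the price of having to carry the hypothesis and the ambient group along the iteration; you do this by observing that the large coefficient occurs at some $\theta\in 2\wh{G}$, hence $\theta$ is trivial on the $2$-torsion and $\ker\theta\cong\Z_4^{r-1}\times\Z_2^{s+1}$. Both the structural claim and the increment $\alpha\mapsto\alpha+\alpha^2/2$ check out.

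Two points to tighten, neither fatal. First, the standing hypothesis at stage $k$ is $|A_k|\geq 2\,|\{g\in G_k:2g=0\}|=2^{n+1}$; since $|G_k|=2^{2n-k}$ shrinks while the $2$-torsion stays of size $2^n$, in density terms this reads $\alpha_k\geq 2^{k+1-n}$, not $\alpha\geq 2^{1-n}$, so it does not persist merely ``because the density grows'' — the threshold doubles each step. The fix is immediate: under the contradiction hypothesis $\alpha_0\geq C/n$ the density exceeds $1$ within $O(1/\alpha_0)=O(n/C)$ steps, and for such $k$ one has $2^{k+1-n}\leq 2^{1-n/2}\ll C/n\leq\alpha_k$ for $n$ large (small $n$ being absorbed into the implied constant), so every step actually used is legitimate. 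Second, in $\Z_4^r\times\Z_2^s$ with $s\geq 1$ the improper differences form $\ker 2$, which is strictly larger than $2G_k$, and their Fourier expansion is supported on $2\wh{G_k}$, a proper subgroup of $\{\gamma:2\gamma=0\}$; the subtraction still works because the extra characters in the main term of $N(A)$ are real and contribute nonnegatively, but this deserves a line where you invoke ``the same argument'' at later stages.
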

The above special case of Lev's work follows rather easily from the method used to prove the Roth-Meshulam theorem coupled with a positivity observation. At considerable further expense we are able to establish the following minor improvement.
\begin{theorem}\label{thm.maintheorem}
Suppose that $G=\Z_4^n$ and $A \subset G$ contains no proper three-term arithmetic progressions. Then
\begin{equation*}
|A|=O(|G|/\log |G|\log \log^{\Omega(1)} |G|).
\end{equation*}
\end{theorem}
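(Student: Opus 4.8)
The plan is to run the Fourier-theoretic density-increment argument behind the Roth--Meshulam theorem and Lev's Theorem~\ref{thm.lev}, but to extract a quantitatively stronger increment at each stage by exploiting a convolution structure that surfaces once the problem is transported to $\Z_2^n$.

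Write $H=2\Z_4^n$, so that $H\cong\Z_2^n$ and $\Z_4^n/H\cong\Z_2^n$, and for $A\subset\Z_4^n$ of density $\alpha=|A|/4^n$ let $F\colon\Z_2^n\to[0,1]$ record the relative density of $A$ on each coset of $H$, normalised so that $\widehat F(0)=\alpha$. Expanding the number of ordered three-term progressions lying in $A$ by means of the Fourier transform on $\Z_4^n$, the characters $\gamma$ with $2\gamma=0$ together supply exactly a main term and the number of improper progressions in $A$ (this is Lev's positivity observation), so that the hypothesis that $A$ has no proper progression rearranges to an identity of the shape
\[
\sum_{\psi\in\Z_2^n\setminus\{0\}}\widehat F(\psi)\,\widehat g(\psi)=\bigl(2^{-n}-\alpha\bigr)\,\mathbb{E}\,F^{2},
\]
where $g\colon\Z_2^n\to\R_{\geqslant0}$ is the transport to $\Z_2^n$ of the restriction of $1_A\ast1_A$ to $H$; concretely $g$ is an average over $w\in\Z_2^n$ of translates of the self-convolutions $1_{A_w}\ast1_{A_w}$, where $A_w\subset\Z_2^n$ is the fibre of $A$ above $w$, of density $F(w)$. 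Two features of this identity matter: its right-hand side is negative of size $\gtrsim\alpha^{3}$, since $\mathbb{E}\,F^{2}\geqslant\alpha^{2}$ and we may assume $\alpha\gg2^{-n}$ (otherwise $|A|\ll\sqrt{|G|}$ and there is nothing to prove); and $g$ is a convolution, so $\|\widehat g\|_{\ell^{1}}\leqslant\mathbb{E}\,F=\alpha$ and, more usefully, $\widehat g$ satisfies a restriction-type $\ell^{3/2}$ estimate inherited from the indicators $1_{A_w}$. Bounding the left side crudely by $\sup_{\psi\ne0}|\widehat F(\psi)|\cdot\|\widehat g\|_{\ell^{1}}$ already produces a non-trivial character $\psi$ with $|\widehat F(\psi)|\gtrsim\alpha^{2}$, and iterating the resulting codimension-one density increment on $\Z_2^n$ --- which lifts to a density increment of $A$ inside an index-two subgroup of $\Z_4^n$ containing $H$ --- reproves Lev's bound $\alpha=O(1/\log|G|)$.

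To win the extra factor $(\log\log|G|)^{\Omega(1)}\asymp(\log(1/\alpha))^{\Omega(1)}$ one has to iterate the density increment more efficiently, and the idea is to exploit the convolution structure of $g$ to do so. Localise to the large spectrum $\Delta=\{\psi:|\widehat F(\psi)|\geqslant\alpha^{2}/(\log(1/\alpha))^{C}\}$: since the other characters contribute only $\lesssim\alpha^{3}/(\log(1/\alpha))^{C}$ to the identity, $\Delta$ carries essentially all the correlation with $\widehat g$. Now run a dichotomy. Either $g$ is well approximated by a function invariant under a subspace of $\Z_2^n$ whose codimension is controlled by $\log(1/\alpha)$ rather than by $1/\alpha$ --- an almost-periodicity statement of Croot--Sisask type for the pieces $1_{A_w}\ast1_{A_w}$ --- and then, combining this with the $\ell^{3/2}$ estimate for $\widehat g$, one manufactures a density increment which, amortised over the argument, is paid for with a power of $\log(1/\alpha)$ fewer dimensions than Lev's; or $\Delta$ spreads out in an additively unstructured way, and then an estimate on the additive energy of $\Delta$ (in the spirit of the Bateman--Katz approach to the cap set problem), fed back into the displayed identity, shows the correlation cannot reach $\alpha^{3}$ unless $\alpha$ is already as small as asserted. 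Iterating the first alternative, and accounting carefully for how the cost of producing structure at each stage is repaid by the density gained, should give Theorem~\ref{thm.maintheorem}.

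The main obstacle is exactly this accounting, together with a tension built into the scheme: producing usable additive structure costs a codimension that shrinks only as the accuracy demanded of it is relaxed, whereas the identity forces structure to appear only once the accuracy is pushed below $\alpha$. Making the competing quantities --- the threshold defining $\Delta$, the codimension supplied by almost-periodicity, and the slack in the energy estimate --- balance so that a net saving of a power of $\log(1/\alpha)$ survives each doubling of the density, and then confirming that the iteration (which descends through index-two subgroups of $\Z_4^n$ that are no longer of the form $\Z_4^m$ but still contain $H$, and so leave the reduction to $\Z_2^n$ intact) does not accumulate unacceptable losses, is where the ``considerable further expense'' of Theorem~\ref{thm.maintheorem} over Theorem~\ref{thm.lev} resides.
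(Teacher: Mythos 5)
Your reduction to the exponent-two group $H=\Im 2=\ker 2$, the identity $\Lambda(A)=\alpha\,\E_{w}F(w)^2+\sum_{\psi\neq 0}\widehat{F}(\psi)\widehat{g}(\psi)$ with $g$ the averaged, translated self-convolutions of the fibres, and the reproof of Lev's bound from $\|\widehat{g}\|_{\ell^1(\wh{H})}\leq\alpha$ are all sound, and they coincide with the paper's set-up (its ``families'', density function $f_{\mathcal{A}}$, and the positivity observation). The gap is everything after that: the part of the argument that actually produces the $\log\log^{\Omega(1)}|G|$ saving is described but not proved. Your dichotomy --- either $g$ is Croot--Sisask almost periodic at a codimension cost governed by $\log(1/\alpha)$, or the large spectrum $\Delta$ of $F$ is additively unstructured and a Bateman--Katz-type energy estimate caps the correlation --- comes with no quantitative statement of either alternative, no proof that the almost-periodicity branch yields an increment whose cost per codimension beats Lev's by a power of $\log(1/\alpha)$, and no argument at all for the second branch. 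Note in particular that $\widehat{g}(\psi)=\E_{w}|\wh{1_{A_w}}(\psi)|^2(-1)^{\psi\cdot w}$ carries the twist $(-1)^{\psi\cdot w}$, so it is not a nonnegative spectral weight, and a bound on the additive energy of $\Delta$ does not by itself control $\sum_{\psi\in\Delta}\widehat{F}(\psi)\widehat{g}(\psi)$; one would need structural information about the spectra of the individual fibres, not just of $F$. You concede yourself that balancing the threshold defining $\Delta$, the almost-periodicity cost and the energy slack is ``where the expense resides'' --- but that balancing is precisely the content of the theorem, so as written this is a research plan rather than a proof.

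For comparison, the paper takes a different and fully worked route that never invokes almost-periodicity or spectral energy of $\Delta$. It dichotomizes on the mean square of the fibre-density function, $\|f_{\mathcal{A}}\|_{L^2(H)}^2=K\alpha^2$: when $K$ is large, a dyadic decomposition plus an elementary iteration (Proposition \ref{prop.largeL} and Corollary \ref{cor.kuy}) gives $\Lambda(\mathcal{A})=\exp(-O(\alpha^{-1}K^{-1}\log^2K))$; when $K$ is small, failure of a simultaneous density increment of the family (Lemmas \ref{lem.coreincrement} and \ref{lem.coreincrement2}) forces many fibres to have large additive energy, which is exploited fibre-by-fibre through a quasi-random Balog--Szemer\'edi--Gowers--Fre{\u\i}man theorem (Corollary \ref{cor.bsgcor}) feeding back into Proposition \ref{prop.largeL} (Lemma \ref{prop.highnrg} and Lemma \ref{lem.mainitlem}); the iteration in \S\ref{sec.end} then balances the two regimes to yield the exponent $1/6$ in Theorem \ref{thm.weighted} and hence Theorem \ref{thm.maintheorem}. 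If you wish to pursue your route you must state and prove both alternatives with explicit parameters and carry out the amortized accounting; as it stands the central quantitative step is missing.
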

It should be noted that the requirement that \emph{all} the elements of our progressions be distinct is essential in our work. It is easy to see by the Cauchy-Schwarz inequality that any set $A \subset G:= \Z_4^n$ has at least $\alpha^2|G|^{3/2}$ progressions. It follows that if $\alpha^2|G|^{3/2}>|G|$ then $A$ contains a progression in which not all the elements are the same, however this may well be a degenerate one of the form $(x,y,x)$.

The paper now splits as follows. First, in \S\ref{sec.ft}, we record the necessary information about the Fourier transform. Then, in \S\S\ref{sec.anal}\verb!&!\ref{sec.out}, we outline our approach to counting progressions and compare it with the Roth-Meshulam-Lev method to give some indication of where we are able to make gains. In \S\ref{sec.families} we define the notion of a family which we shall work with for the bulk of the paper and the proof of Theorem \ref{thm.maintheorem} in \S\S\ref{sec.dense}--\ref{sec.end}.  We close in \S\ref{sec.obstacles} with a conjecture some concluding remarks on lower bounds.

\section{The {F}ourier transform}\label{sec.ft}

We shall make considerable use of the Fourier transform for which the classic book \cite{WR} of Rudin serves as the standard reference. Having said this, the style of our work has more in common with the modern reference \cite{TCTVHV} of Tao and Vu which is also to be recommended.

Suppose that $G$ is a finite abelian group. $\wh{G}$ denotes the \emph{dual group} of $G$, that is the group of homomorphisms $\gamma:G\rightarrow S^1$, where $S^1:=\{z \in \C:|z|=1\}$.  $G$ is endowed with a natural Haar probability measure denoted $\P_G$ assigning mass $|G|^{-1}$ to each element of $G$; we denote integration against $\P_G$ by $\E_{x \in G}$ and, in general $\E_{x \in S}$ corresponds to integration against the probability measure $\P_S$ assigning mass $|S|^{-1}$ to each $s \in S$. 

For $p \in [1,\infty]$ we define the spaces $L^p(G)$ and $\ell^p(G)$ to be the vector space of functions $f:G \rightarrow \C$ endowed with the norms
\begin{equation*}
\|f\|_{L^p(G)}:=\left(\E_{x \in G}{|f(x)|^p}\right)^{1/p} \textrm{ and } \|f\|_{\ell^p(G)}:=\left(\sum_{x \in G}{|f(x)|^p}\right)^{1/p}
\end{equation*}
respectively, with the usual conventions when $p=\infty$. As vector spaces these are all the same (since $G$ is finite), although the norms are different.  A specific consequence of this normalisation is that
\begin{equation*}
\langle f,g \rangle_{L^2(G)} = \E_{x \in G}{f(x)\overline{g(x)}} \textrm{ and } \langle f,g\rangle_{\ell^2(G)}=\sum_{x \in G}{f(x)\overline{g(x)}}.
\end{equation*}

We define the Fourier transform in the usual way, mapping a function $f \in L^1(G)$ to $\wh{f} \in \ell^\infty(\wh{G})$ where
\begin{equation*}
\wh{f}(\gamma):=\E_{x \in G}{f(x)\overline{\gamma(x)}}=\frac{1}{|G|}\sum_{x \in G}{f(x)\overline{\gamma(x)}}.
\end{equation*}
The significance of the Fourier transform is, in no small part, determined by the effect it has on convolution: recall that if $f,g \in L^1(G)$ then their convolution $f \ast g$ is defined by
\begin{equation*}
(f \ast g)(x):=\E_{x \in G}{f(x)g(y-x)}.
\end{equation*}
The Fourier transform functions as an algebra isomorphism from $L^1(G)$ under convolution to $\ell^\infty(\wh{G})$ under point-wise multiplication: $\wh{f \ast g}=\wh{f} . \wh{g}$.

As a word of warning we remark that both convolution and the Fourier transform are used on different groups at the same time through this work, and although it is always made clear, the reader should be alert to this.

We are particularly interested in finite (abelian) groups of exponent $2$, all of which are isomorphic to $\Z_2^n$ for some $n$; to avoid introducing an unnecessary parameter we shall refer to them in the former terms. On these groups the characters correspond to maps $x \mapsto (-1)^{r.x}$, where $r.x$ is the usual bilinear form on $\Z_2^n$ considered as a vector space over $\F_2$.

\section{Counting progressions and analytic statement of results}\label{sec.anal}

It has been observed in many places that one may estimate the size of the largest subset of an abelian group not containing a three-term arithmetic progression by establishing a lower bound on the number of three-term arithmetic progressions.  It should, therefore, come as little surprise that we are interested in the quantity
\begin{equation*}
\Lambda(A):=\E_{x,d \in G}{1_A(x)1_A(x+d)1_A(x+2.d)}.
\end{equation*}
which counts three-term arithmetic progressions: specifically $\Lambda(A)|G|^2$ is the number of three-term arithmetic progressions in $A$. 

Denoting by $T(G)$ the number of trivial\footnote{i.e. not proper.}  three-term arithmetic progressions in $G$ we see that if $\Lambda(A)|G|^2 > T(G)$ then we must have a non-trivial three-term arithmetic progression.  This perspective is, perhaps, inspired by Varnavides who established an equivalence in \cite{PV} but we shall not dwell on this relationship here.

Meshulam's theorem is a simple corollary of the following result.
\begin{theorem}\label{thm.meshcount}
Suppose that $G$ is a finite abelian group of odd order and $A \subset G$ has density $\alpha>0$. Then \begin{equation*}
\Lambda(A) \geq \exp(-\alpha^{-O(1)}).
\end{equation*}
\end{theorem}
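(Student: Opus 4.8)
The plan is to run the classical density-increment argument (Roth's method in the form used by Meshulam). First I would pass to the Fourier side: writing $f = 1_A$ and separating off the trivial Fourier mode, one has
\begin{equation*}
\Lambda(A) = \sum_{\gamma \in \wh{G}} \wh{f}(\gamma)^2 \wh{f}(-2 \cdot \gamma),
\end{equation*}
where $2 \cdot \gamma$ makes sense because $|G|$ is odd, so doubling is an automorphism of $\wh{G}$. The principal term is $\gamma = 0_{\wh G}$, which contributes $\alpha^3$. If every other Fourier coefficient were small, say $\|\wh{f}\|_{\ell^\infty(\wh{G} \setminus \{0\})} \le \alpha^2/2$, then Parseval ($\sum_\gamma |\wh f(\gamma)|^2 = \E|f|^2 = \alpha$) would bound the error term in absolute value by $(\alpha^2/2)\cdot\alpha = \alpha^3/2$, giving $\Lambda(A) \ge \alpha^3/2$, which is far stronger than the claimed bound. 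So I may assume there is a nonzero character $\gamma_0$ with $|\wh{f}(\gamma_0)| \ge \alpha^2/2$.

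The next step is the density increment. A large Fourier coefficient means $f$ correlates with the character $\gamma_0$; averaging $f$ over the cosets of the (index-$|{\rm ord}(\gamma_0)|$, hence small-index if the group had bounded exponent — but in general one uses Bohr sets) subgroup on which $\gamma_0$ is constant, or more simply partitioning $G$ into the fibres of $\gamma_0$ and using that $\Image \gamma_0$ is a cyclic group of odd order, one finds a coset (or a suitable sub-progression of the fibres) on which $A$ has density at least $\alpha + c\alpha^2$ for an absolute constant $c > 0$. Here I would be slightly careful: in a general finite abelian group of odd order one does not have a codimension-one subgroup available, so the cleanest route is to work inside the cyclic group $\Image\gamma_0 \subset S^1$, split it into a bounded number of arcs, and pull back — this is exactly the point where Meshulam's argument diverges from the Roth-Meshulam argument for $\Z_3^n$, and it is the source of the weaker $\log^{\Omega(1)}$-type bound rather than a clean $\log$. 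One then iterates: the density cannot exceed $1$, and each step multiplies the "deficiency'' in a way that forces termination after $O(\alpha^{-1})$ steps, inside an ambient group whose size has shrunk by at most a bounded factor per step, hence is still of size $|G|^{\Omega(1)}$ provided the process does not run too long; tracking this gives that the whole iteration is valid as long as $\alpha \gg 1/\log^{\Omega(1)}|G|$, and when it terminates we have located a structured piece on which $A$ has density close to $1$, forcing many genuine progressions and hence $\Lambda(A) \ge \exp(-\alpha^{-O(1)})$ back in $G$ by a Varnavides-type averaging.

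The main obstacle I anticipate is bookkeeping the iteration in a general odd-order abelian group rather than in $\Z_3^n$: one must ensure at every stage that the structured set one restricts to is itself a union of arithmetic progressions (or a coset of a subgroup) of controlled size, so that a lower bound for $\Lambda$ on the restricted object transfers back to a lower bound for $\Lambda(A)$ with only a bounded loss per step. A secondary technical point is handling the doubling map and the distinction between $\gamma$ and $2\cdot\gamma$ in the Fourier expansion — harmless for odd order, but it must be stated. Everything else (Parseval, the dichotomy, the fact that density increments of size $\alpha^2$ can happen at most $O(\alpha^{-1})$ times before density exceeds $1/2$) is routine.
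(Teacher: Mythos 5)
Your skeleton (the Fourier expansion of $\Lambda(A)$, the dichotomy at the threshold $\alpha^2/2$ with Parseval controlling the nontrivial modes, and an $O(\alpha^{-1})$-step density-increment iteration) is the standard one; note, though, that the paper does not prove Theorem \ref{thm.meshcount} at all — it quotes it from Meshulam — and only sketches the analogous argument in the bounded-exponent setting (Lemma \ref{lem.rmit}, Lemma \ref{lem.itrm}, Theorem \ref{thm.weak}), where a large Fourier coefficient hands you increased density on a subgroup of \emph{bounded index}. That is precisely what is unavailable in the generality of Theorem \ref{thm.meshcount}, and it is the step your proposal flags but does not resolve. For $G=\Z/p\Z$ with $p$ prime (a group covered by the statement) there is no proper subgroup whatsoever, and the preimage of an arc of $\Image\gamma_0\subset S^1$ is a Bohr-type set: it is neither a coset nor an arithmetic progression, it is not an ambient object of the same kind as $G$, and a lower bound for the progression count relative to it does not transfer back to $\Lambda(A)$ in the way your iteration needs. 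Making the increment iterable is the actual content of the theorem: one must either pass from the arc-preimage to a genuine long arithmetic progression in the cyclic factors (Roth's Dirichlet/Weyl step, with its quantitative losses), or run the iteration on regular Bohr sets in Bourgain's style, or follow Meshulam's treatment of the cyclic decomposition of $G$. The target bound $\exp(-\alpha^{-O(1)})$ is forgiving enough that any of these works, but one of them has to be carried out; as written, your sketch asserts the increment rather than supplying it.

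A secondary point: the caveat ``the iteration is valid as long as $\alpha \gg 1/\log^{\Omega(1)}|G|$'' belongs to the no-progression corollary, not to this counting statement, which carries no hypothesis relating $\alpha$ to $|G|$. The clean way to close the argument is: if at some stage the ambient structured object has size at most $\exp(\alpha^{-O(1)})$, then the trivial progressions already give $\Lambda(A)\geq \alpha/|G|\geq \exp(-\alpha^{-O(1)})$ and you stop; otherwise the iteration terminates in the uniform case, and the count transfers back to $G$ with a loss of the square of the density of the final structured object in $G$, which is again $\exp(-\alpha^{-O(1)})$. With that bookkeeping the appeal to ``Varnavides-type averaging'' at the end is unnecessary, and on its own it would not produce the stated lower bound.
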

To see how Meshulam's theorem follows, note that if $G$ is of odd order then $(x,x+d,x+2.d)$ is a proper progression if and only if $d \neq 0_G$. Thus $T(G) =|G|$ and so if $\Lambda(A)|G|^2 > |G|$ then $A$ contains a proper progression; the result follows on inserting the bound for $\Lambda(A)$ from the theorem and rearranging.

In \cite{VFL} Lev effectively removed the `odd order condition' from Theorem \ref{thm.meshcount} to establish the following result.
\begin{theorem}\label{thm.lef}
Suppose that $G$ is a finite abelian group and $A \subset G$ has density $\alpha>0$. Then\begin{equation*}
\Lambda(A) \geq \exp(-\alpha^{-O(1)}).
\end{equation*}
\end{theorem}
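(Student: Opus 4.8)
The plan is to re-run the Fourier-analytic density-increment argument behind Theorem~\ref{thm.meshcount}, inserting a single positivity observation that renders the characters of order at most $2$ harmless. Expanding each copy of $1_A$ into characters and using orthogonality over $x$ and $d$ collapses the count to the identity
\begin{equation*}
\Lambda(A)=\sum_{\gamma \in \wh{G}}\wh{1_A}(\gamma)^2\wh{1_A}(\gamma^{-2}),
\end{equation*}
whose right-hand side is visibly real on pairing $\gamma$ with $\gamma^{-1}$, and in which the $\gamma=1$ term contributes the main term $\alpha^3$.

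First I would dispose of the ``terminal'' case, in which $\max_{\gamma\neq 1}|\wh{1_A}(\gamma)|<\alpha^2/2$. Split the sum according to whether $\gamma^2=1$ or not. If $\gamma^2=1$ then $\gamma$ takes only the values $\pm 1$, so $\wh{1_A}(\gamma)\in\R$, whence $\wh{1_A}(\gamma)^2\geq 0$; moreover $\gamma^{-2}=1$, so these terms sum to $\alpha\sum_{\gamma^2=1}\wh{1_A}(\gamma)^2\geq \alpha\,\wh{1_A}(1)^2=\alpha^3$. For the remaining $\gamma$ we have $\gamma^2\neq 1$, hence $\gamma^{-2}\neq 1$ and $|\wh{1_A}(\gamma^{-2})|<\alpha^2/2$, so by Parseval their total contribution is at most $(\alpha^2/2)\sum_\gamma|\wh{1_A}(\gamma)|^2=\alpha^3/2$ in modulus. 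Altogether $\Lambda(A)\geq \alpha^3-\alpha^3/2\geq \exp(-\alpha^{-O(1)})$. This is the only place the order-$2$ characters intervene, and it is precisely the step where oddness of $|G|$ enters Theorem~\ref{thm.meshcount}: there it forces $\gamma=1$ to be the unique solution of $\gamma^2=1$, while here positivity does the same job for the whole of $\{\gamma:\gamma^2=1\}$.

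If instead some $\gamma^{*}\neq 1$ has $|\wh{1_A}(\gamma^{*})|\geq\alpha^2/2$, one performs a density increment exactly as in the proof of Theorem~\ref{thm.meshcount}: $\gamma^{*}$ is constant on the cosets of $\ker\gamma^{*}$, and on a suitable substructure (a coset of a subgroup, or a long progression, as the argument of Theorem~\ref{thm.meshcount} dictates) $A$ has density at least $\alpha+\Omega(\alpha^2)$; iterating, the density cannot exceed $1$, so after $\alpha^{-O(1)}$ steps one reaches the terminal case, and since every three-term progression found in the final substructure is a genuine progression of $G$ one recovers a lower bound of the required shape for $\Lambda(A)$. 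The subgroups produced en route need no longer have odd order, but this is immaterial: apart from the terminal step the argument never uses oddness.

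The main obstacle is not the positivity observation, which is the short step above, but importing the iteration of Theorem~\ref{thm.meshcount} intact: over a general finite abelian group $\gamma^{*}$ may have unbounded—even prime—order, so $\ker\gamma^{*}$ can have unbounded index, and a naive descent through subgroups risks a final bound that degrades with $|G|$. This is exactly the difficulty already resolved inside Theorem~\ref{thm.meshcount} (in the bounded-exponent case it evaporates, every nontrivial character having bounded order), so the real work is to check that that resolution is untouched by the presence of order-$2$ characters—which it is, since those feature only in the terminal estimate, where positivity handles them cleanly.
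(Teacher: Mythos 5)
Your terminal case is exactly the positivity observation the paper attributes to Lev (compare the sketch of Lemma \ref{lem.itrm}): the characters killed by doubling are real-valued, so their contribution is $\alpha\sum_{2.\gamma=0_{\wh{G}}}\wh{1_A}(\gamma)^2\geq\alpha^3$, while the remaining modes are bounded by $\sup_{\gamma\neq 0_{\wh{G}}}|\wh{1_A}(\gamma)|\cdot\alpha$ via Parseval; that step is correct, and it is essentially all the paper itself carries out (it cites Lev for Theorem \ref{thm.lef} and only runs the iteration for bounded-exponent groups, as in Theorem \ref{thm.weak}).

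The gap is in the increment step, precisely at the point you flag and then wave through. For general finite abelian $G$ the biased character $\gamma^{*}$ may have order comparable to $|G|$ (take $G=\Z_N$ with $N$ a large prime or power of $2$), so the descent runs through substructures of unbounded relative size: kernels are useless, and the Roth-style passage to progressions on which $\gamma^{*}$ is roughly constant loses a factor of order $|G|^{1/2}$ of the ambient size at each step. Consequently ``count the three-term progressions inside the final substructure'' yields only $\Lambda(A)\gtrsim\alpha^3\bigl(|P|/|G|\bigr)^2$ with $|P|$ as small as a small fixed power of $|G|$, a bound that degrades with $|G|$ rather than the claimed $\exp(-\alpha^{-O(1)})$. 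This is not ``the difficulty already resolved inside Theorem \ref{thm.meshcount}'' in the form you describe: when large cyclic factors are present, the $|G|$-independent count is obtained not by counting inside one terminal substructure but by proving a density statement (Roth with a power-of-logarithm saving) and converting it into a count by Varnavides-type averaging over many substructures of size $\exp(\alpha^{-O(1)})$ --- a genuinely different recovery mechanism, whose insensitivity to even torsion (differences of small order collapsing the sampled progressions, the interaction of the $2$-component with the reduction to cyclic factors) is exactly what would need to be checked. So the assertion that oddness enters only in the terminal estimate is not established, and the last sentence of your increment paragraph fails for unbounded exponent; for bounded-exponent groups, which is all the present paper needs, your argument is fine.
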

In general abelian groups $T(G)$ may be comparable to $|G|^2$ which is why we are not able to conclude  Meshulam's theorem without the odd order condition.  Indeed, as noted before it is not always true.

It is instructive to consider two examples.  First, in $G=\Z_2^n$ one sees that $T(G)=|G|^2$ -- all progressions are trivial -- so although we have many\footnote{In fact it is easy to see this without Theorem \ref{thm.lef}: $A \subset \Z_2^n$ clearly contains $|A|^2$ progressions since every pair $(x,y) \in A^2$ generates a triple $(x,y,x)$ which is a three-term arithmetic progression in $\Z_2^n$.} progressions, none are proper.

Second, the group $G=\Z_4^n$ has $T(G)=|G|^{3/2}+O(|G|)$: any trivial progression $(x,y,z)$ with $x+z=2.y$ has $x=z$, $x=y$ or $y=z$.  In the first case this implies that $x-y \in \{x' \in G: 2.x'=0_G\}$; in the second and third cases this implies that all three elements are equal.  Thus, in the first case we have $|G|. |\{x' \in G: 2.x'=0_G\}|$ progressions and in the second and third $|G|$ each.  This leads to the claimed bound which in turn allows us to establish Meshulam's theorem for $\Z_4^n$.

In this particular case, however, one may proceed directly along the lines of the proof of the Roth-Meshulam theorem (coupled with the aforementioned positivity observation) to establish a stronger bound than in Theorem \ref{thm.lef}.
\begin{theorem}\label{thm.weak}
Suppose that $G=\Z_4^n$ and $A \subset G$ has density $\alpha>0$. Then
\begin{equation*}
\Lambda(A) \geq \exp(-O(\alpha^{-1})).
\end{equation*}
\end{theorem}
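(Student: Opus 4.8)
The plan is to run the Roth--Meshulam density-increment argument, exploiting the positivity phenomenon peculiar to groups of exponent dividing $4$. It is convenient to prove the following slightly more general assertion, of which the theorem is the case $G = \Z_4^n$: \emph{if $G$ is any finite abelian group of exponent dividing $4$ and $A \subseteq G$ has density $\alpha > 0$, then $\Lambda(A) \geq \exp(-O(\alpha^{-1}))$}. (This extra generality is forced on us since an index-$2$ subgroup of $\Z_4^n$ is in general not of the form $\Z_4^{n-1}$, although it does still have exponent dividing $4$.) Expanding each of the three factors $1_A$ by Fourier inversion on $G$ and using $\gamma(2.d) = \gamma(d)^2$, one finds that the only triples of characters $(\gamma_1, \gamma_2, \gamma_3)$ surviving the averages over $x$ and $d$ are those with $\gamma_1 = \gamma_3$ and $\gamma_2 = \gamma_3^{-2}$, so that
\begin{equation*}
\Lambda(A) = \sum_{\gamma \in \wh{G}} \wh{1_A}(\gamma)^2\, \wh{1_A}(\gamma^{-2}).
\end{equation*}
Since $G$ has exponent dividing $4$ we have $\gamma^{-2} = \gamma^2$ and $(\gamma^2)^2 = 1$, so every $\gamma^2$ appearing here is a real character, with values in $\{\pm 1\}$. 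Splitting the sum according to whether or not $\gamma^2$ is trivial gives
\begin{equation*}
\Lambda(A) = \alpha \sum_{\gamma \in \wh{G},\, \gamma^2 = 1} \wh{1_A}(\gamma)^2 \;+\; \sum_{\gamma \in \wh{G},\, \gamma^2 \neq 1} \wh{1_A}(\gamma)^2\, \wh{1_A}(\gamma^2),
\end{equation*}
and the first sum is non-negative \emph{term by term}, because $\gamma^2 = 1$ forces $\gamma$ itself to be real-valued, hence $\wh{1_A}(\gamma) \in \R$. This is the positivity observation.

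Next I would estimate the two pieces. Write $M := \max\{|\wh{1_A}(\beta)| : \beta \neq 1,\ \beta^2 = 1\}$. Retaining only the trivial character and a maximiser, the first sum is at least $\alpha^2 + M^2$. In the second sum each $\gamma^2$ is a non-trivial real character, so $|\wh{1_A}(\gamma^2)| \leq M$, while $\sum_\gamma |\wh{1_A}(\gamma)|^2 = \alpha$ by Parseval; hence that sum has modulus at most $M\alpha$. Altogether $\Lambda(A) \geq \alpha(\alpha^2 + M^2 - M)$. If $M \leq \alpha^2/2$ this already gives $\Lambda(A) \geq \alpha^3/2$, and we are done. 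If instead $M > \alpha^2/2$, there is a non-trivial real character $\beta$ with $|\wh{1_A}(\beta)| > \alpha^2/2$; writing $\wh{1_A}(\beta)$ in terms of $|A \cap H|$ and $|A \cap (H + t)|$, where $H := \ker \beta$ has index $2$ and $t \notin H$, shows that on one of the two cosets of $H$ the density of $A$ is at least $\alpha + |\wh{1_A}(\beta)| > \alpha(1 + \alpha/2)$. After translating that coset onto $H$ we have replaced $G$ by a group of exponent dividing $4$ of half the size, and $\alpha$ by something at least $\alpha(1 + \alpha/2)$.

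Finally I would iterate this dichotomy. After $t$ invocations of the second alternative we have a subgroup $G_t \leq G$ of index $2^t$ and (a translate of) a subset $A_t \subseteq G_t$ of density $\alpha_t$, with $\alpha_{t+1} \geq \alpha_t(1 + \alpha_t/2)$; moreover a routine normalisation, using that three-term progressions are preserved under translation and cannot be created by passing to a subset, gives $\Lambda(A) \geq 4^{-t}\Lambda_{G_t}(A_t)$. Since $\alpha_t \leq 1$ throughout, $1/\alpha_t$ decreases by at least $\frac{1}{3}$ at each such step, so the process terminates after $T' = O(\alpha^{-1})$ steps with the first alternative holding, and therefore
\begin{equation*}
\Lambda(A) \geq 4^{-T'}\Lambda_{G_{T'}}(A_{T'}) \geq 4^{-T'} \cdot \frac{1}{2}\alpha_{T'}^3 \geq \exp(-O(\alpha^{-1})),
\end{equation*}
using $T' = O(\alpha^{-1})$, $\alpha_{T'} \geq \alpha$ and $\alpha^3 \geq \exp(-O(\alpha^{-1}))$. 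The one point needing care is the bookkeeping just described: one must extract a \emph{multiplicative} density increment $\alpha \mapsto \alpha(1 + c\alpha)$ rather than a merely additive one $\alpha \mapsto \alpha + c\alpha^2$, since only the former forces the iteration to terminate within $O(\alpha^{-1})$ steps — an additive increment would permit $O(\alpha^{-2})$ steps and so yield only the weaker bound $\exp(-O(\alpha^{-2}))$. Everything else is a routine transcription of the Roth--Meshulam argument, the restriction to characters of order dividing $2$ dictated by the positivity observation being precisely what keeps index-$2$ subgroups — and hence the class of groups of exponent dividing $4$ — available for the recursion.
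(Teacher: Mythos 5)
Your proposal is correct and follows essentially the same route as the paper's (sketched) proof of Theorem \ref{thm.weak}: the expansion $\Lambda(A)=\sum_{\gamma}\wh{1_A}(\gamma)^2\wh{1_A}(2.\gamma)$, Lev's positivity observation for the characters with $2.\gamma=0_{\wh{G}}$, and an iterated density increment on subgroups that terminates after $O(\alpha^{-1})$ steps. The only cosmetic difference is that by bounding the error term via the largest coefficient at a non-trivial \emph{real} character you only ever pass to index-$2$ subgroups, whereas the paper's Lemma \ref{lem.rmit} uses an arbitrary non-trivial character and a bounded-index subgroup; this changes nothing in the final bound.
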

On arranging $\alpha$ large enough so that $\Lambda(A)|G|^2 > |G|^{3/2}+O(|G|)$ is guaranteed by the above theorem we get Theorem \ref{thm.lev}; the main result of this paper is the following refinement of Theorem \ref{thm.weak} which by a similar arrangement implies Theorem \ref{thm.maintheorem}.
\begin{theorem}\label{thm.weighted}
Suppose that $G=\Z_4^n$ and $A \subset G$ has density $\alpha>0$. Then 
\begin{equation*}
\Lambda(A) \geq \exp(-O(\alpha^{-1}\log^{-1/6}\alpha^{-1}\log \log ^{5/3}\alpha^{-1})).
\end{equation*}
\end{theorem}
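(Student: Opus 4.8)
The plan is to run the density-increment strategy of Roth--Meshulam--Lev, but to use a more efficient iteration step that exploits a Bogolyubov-type structure rather than a single large Fourier coefficient. We work in $G = \Z_4^n$ and write $H := \{x \in G : 2.x = 0_G\} \cong \Z_2^n$ for the subgroup of elements of order dividing $2$; the quotient $G/H \cong \Z_2^n$ also has exponent $2$. The key positivity observation (which already underlies Theorems \ref{thm.lev} and \ref{thm.weak}) is that in $\Z_4^n$ one can write $\Lambda(A)$ in a form where the Fourier side only sees characters that factor through $H$, and on such characters the relevant weights are nonnegative; concretely, using $\wh{1_A}$ evaluated on the relevant subgroup of $\wh{G}$ one gets
\begin{equation*}
\Lambda(A) = \sum_{\gamma} \wh{1_A}(\gamma)\wh{1_A}(-2\gamma)\overline{\wh{1_A}(-\gamma)} \geq \alpha^3 + (\textrm{nonnegative contribution from the }H\textrm{-part}).
\end{equation*}
The first step is thus to set up this Fourier identity carefully and isolate the part of the spectrum that is genuinely ``of exponent $2$''. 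The second step is the dichotomy: either the $\ell^2$ mass of $\wh{1_A}$ away from $0$ on this exponent-$2$ part is small — in which case $\Lambda(A)$ is already bounded below by roughly $\alpha^3$, which is more than enough — or there is a large spectral piece, and we must pass to a subspace on which $A$ has increased density.

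The efficiency gain, and the reason the final bound has the $\log^{-1/6}$ and $\log\log^{5/3}$ factors, comes from not iterating on a single character but on a whole \emph{family} (as promised in \S\ref{sec.families}): rather than intersecting one hyperplane per step and losing one dimension for a density increment of order $\alpha$, I would collect a Bogolyubov-type set — an approximate subgroup, or a Bohr-type set relative to the exponent-$2$ structure — built from a large-spectrum set of size controlled by the $\ell^2$ mass, and run a single long iteration of the Croot--Sisask / almost-periodicity type. Concretely, the plan is: (i) extract a set $\Delta \subset \wh{G}$ of characters of exponent $2$ carrying $\ell^2$-mass at least $\sim \eta \alpha^2$ with $|\Delta|$ not too large; (ii) apply an almost-periodicity result (Croot--Sisask, as in the modern treatments) to $1_A \ast 1_{-A}$ or to $1_A$ itself along the annihilator-type object generated by $\Delta$ to locate a coset of a subspace $V$ of bounded codimension on which $A$ has density $\alpha(1 + c\eta)$ or so, \emph{but} with $\mathrm{cod}(V)$ polynomial in $|\Delta|/\alpha$ and in $\log(1/\alpha)$ rather than exponential; (iii) track the trade-off between the density increment and the codimension loss over the whole iteration, optimising the free parameters (this is where the fractional exponents $1/6$ and $5/3$ are pinned down).

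The main obstacle — and the ``considerable further expense'' the introduction alludes to — is step (ii): making the almost-periodicity argument interact correctly with the mixed $\Z_4$/$\Z_2$ structure. In $\Z_3^n$ or in the pure Roth setting one works with honest subgroups or Bohr sets and the structure is clean; here the progression $(x, x+d, x+2.d)$ couples the full group $G$ with its order-$2$ subgroup $H$ in an asymmetric way, so the object one iterates into is not simply a subspace of $\Z_4^n$ but something adapted to how $2.d$ ranges over $H$. One must check that (a) restricting to a coset of $V \leq G$ still leaves enough room in the $d$-variable for the positivity argument to apply on the restricted group, i.e. that $V$ contains the appropriate amount of $2$-torsion; (b) the density increment genuinely propagates — a coset of $V$ on which $1_A$ is dense must translate into a genuine increment for the $\Lambda$-count, which requires re-running the Fourier identity inside $V$; and (c) the quantitative bounds in the almost-periodicity step (number of translates sampled, $L^p$ norm used, resulting codimension) are tuned so that the iteration survives $\sim \alpha^{-1}$ steps without the codimension exceeding $\sim n$, since $|G|/\log|G| \sim 4^n/n$ is exactly the threshold we are trying to beat. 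Balancing these, with $p \sim \log(1/\alpha)$ in the Croot--Sisask step and the size of $\Delta$ controlled via a suitable covering or Chang-type bound, should yield the stated exponents; I expect the bulk of the paper to be the bookkeeping in (c) and the verification that the family formalism of \S\ref{sec.families} makes (a) and (b) go through uniformly.
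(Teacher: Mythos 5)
Your outline defers the entire content of the theorem to step (ii), which you assert rather than prove. The claim that an almost-periodicity/Bogolyubov argument produces a density increment of order $\alpha\eta$ at a codimension cost only polynomial in $|\Delta|/\alpha$ and $\log\alpha^{-1}$, \emph{and} that this can be done compatibly with the fibred $\Z_4$/$\Z_2$ structure, is precisely the hard part; no lemma in your sketch delivers it, and the obstacles you yourself list under (a)--(c) are exactly where the difficulty lives. In particular, the case identified in \S\ref{sec.out} as the real problem is the one where the density function $f_{\mathcal{A}}$ of the family has small $L^2$-mass: there one needs an increment that is \emph{simultaneous} across many fibres $A_y$, whose individual spectra need not be aligned with any single large-spectrum set $\Delta$ you extract, and nothing in your plan addresses this. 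The paper takes a different route entirely (it does not use Croot--Sisask): it dichotomises on $\|f_{\mathcal{A}}\|_{L^2(H)}^2=K\alpha^2$, handling large $K$ by a dyadic decomposition and Proposition \ref{prop.largeL} (Corollary \ref{cor.kuy}); for small $K$, Lemma \ref{lem.mainitlem} shows that failure of an $L\alpha^2/4K$-increment --- obtained from Lemmas \ref{lem.coreincrement} and \ref{lem.coreincrement2}, each costing only index $2$ --- forces many fibres to have large additive energy, whereupon Lemma \ref{prop.highnrg} applies the quasi-random Balog--Szemer{\'e}di--Gowers--Fre{\u\i}man theorem (Corollary \ref{cor.bsgcor}, built on Theorem \ref{thm.bsg}) fibre by fibre, pigeonholes the resulting subgroups, and feeds the outcome back into Proposition \ref{prop.largeL}. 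The exponents $1/6$ and $5/3$ emerge from optimising $L$ against $K$ in that specific scheme, with Corollary \ref{cor.bsgcor} (not a Chang-type bound) as the bottleneck; your ``optimisation in (iii)'' has no corresponding quantitative lemma to optimise, so the stated bound is not established.

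There is also a concrete error in your first step. The inversion formula gives $\Lambda(A)=\sum_{\gamma\in\wh{G}}\wh{1_A}(\gamma)^2\wh{1_A}(2.\gamma)$; only the terms with $2.\gamma=0_{\wh{G}}$ are nonnegative (those $\gamma$ are real characters, and they contribute at least $\alpha^3$ by Lev's observation), while the remaining terms carry no sign and can only be bounded in modulus by $\alpha\sup_{\gamma\neq 0_{\wh{G}}}|\wh{1_A}(\gamma)|$. Your displayed inequality $\Lambda(A)\geq\alpha^3+(\textrm{nonnegative contribution})$ is therefore false as stated --- were it true, Theorem \ref{thm.weighted} (and far more) would follow with no iteration at all. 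The correct statement is the dichotomy of Lemma \ref{lem.itrm}: either $\Lambda(A)=\Omega(\alpha^3)$, or some nontrivial Fourier coefficient is $\Omega(\alpha^2)$ and one must perform a density increment, which is why the whole increment machinery is needed in the first place.
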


\section{Outline of the proof}\label{sec.out}

Our work is strongly influenced by the original Roth-Meshulam-Lev argument; to explain our extra purchase we shall recall a sketch of this. There are basically three ingredients.  First, one has a lemma passing from a large Fourier coefficient to increased density on a subgroup.
\begin{lemma}\label{lem.rmit}
Suppose that $G$ is a group of bounded exponent, $A \subset G$ has density $\alpha>0$ and $\sup_{\gamma \neq 0_{\wh{G}}}{|\wh{1_A}(\gamma)|} \geq \epsilon\alpha$. Then there is a subgroup $G' \leq G$ of bounded index such that $\|1_A \ast \P_{G'}\|_{L^\infty(G)} \geq \alpha+\Omega(\alpha \epsilon)$.
\end{lemma}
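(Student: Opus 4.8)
The plan is to prove Lemma \ref{lem.rmit} by the standard density-increment mechanism adapted to a group of bounded exponent, where the Fourier characters are two-valued (or bounded-order-valued) and so automatically pick out a subgroup of bounded index. First I would fix a non-trivial character $\gamma$ with $|\wh{1_A}(\gamma)| \geq \epsilon\alpha$ and let $G' := \ker \gamma \leq G$. Since $G$ has bounded exponent, say $\exp(G) \leq m$, the image $\gamma(G)$ is a finite subgroup of $S^1$ of order dividing $m$, so $[G:G'] = |\gamma(G)| \leq m = O(1)$; thus $G'$ is the bounded-index subgroup we want, and its cosets are exactly the level sets of $\gamma$.

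The next step is to compare $1_A$ with its average $1_A \ast \P_{G'}$ along cosets of $G'$. Writing $f := 1_A - \alpha$ for the balanced function, I would observe that $\wh{f}(0_{\wh G}) = 0$ and that the Fourier support of $1_A \ast \P_{G'} - \alpha = f \ast \P_{G'}$ is contained in the annihilator $(G')^\perp$ of $G'$, which is precisely the cyclic group generated by $\gamma$ inside $\wh G$. The key quantitative point is then a lower bound on $\|f \ast \P_{G'}\|_{L^\infty(G)}$: since $\gamma \in (G')^\perp \setminus \{0\}$ we have, by Fourier inversion restricted to this annihilator and the triangle inequality,
\begin{equation*}
\|f \ast \P_{G'}\|_{L^1(G)} \geq \left| \E_{x \in G} f \ast \P_{G'}(x) \overline{\gamma(x)} \right| = |\wh{f}(\gamma)| = |\wh{1_A}(\gamma)| \geq \epsilon\alpha,
\end{equation*}
using that $\wh{f}(\gamma) = \wh{f \ast \P_{G'}}(\gamma)$ because $\wh{\P_{G'}}(\gamma) = 1$ for $\gamma$ trivial on $G'$. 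Combined with the mean-zero condition $\E_{x \in G} f \ast \P_{G'}(x) = 0$, the standard splitting of $L^1$ into positive and negative parts gives $\E (f \ast \P_{G'})_+ \geq \epsilon\alpha/2$, hence $\sup_x f \ast \P_{G'}(x) \geq \epsilon\alpha/2$ wait --- one must be slightly more careful since the $L^1$ norm controls only the average of $|f \ast \P_{G'}|$, not its sup directly; but $f \ast \P_{G'}$ is constant on each coset of $G'$ and there are only $O(1)$ cosets, so $\|f \ast \P_{G'}\|_{L^\infty(G)} \geq \|f \ast \P_{G'}\|_{L^1(G)}/1 = \Omega(\epsilon\alpha)$ in fact directly, since the $L^1$ and $L^\infty$ norms of a function taking at most $[G:G']$ values are within a factor $[G:G'] = O(1)$ of each other. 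Unwinding, $\|1_A \ast \P_{G'}\|_{L^\infty(G)} = \|\alpha + f \ast \P_{G'}\|_{L^\infty(G)} \geq \alpha + \Omega(\alpha\epsilon)$, noting the sup is attained where $f \ast \P_{G'}$ is positive.

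I expect the only genuine subtlety --- rather than a true obstacle --- to be bookkeeping the bounded-exponent hypothesis correctly: one needs $[G:G']$ bounded purely in terms of $\exp(G)$ so that the passage from $L^1$ control to $L^\infty$ control (equivalently, to a density increment on some coset of $G'$) loses only a constant factor, and one needs to make sure the increment is genuinely additive (the $\alpha + \Omega(\alpha\epsilon)$ shape) rather than multiplicative-in-disguise. A secondary point of care is that $G'$ should be taken so that $1_A \ast \P_{G'}$ restricted to $G'$ itself, after translation, still has density $\geq \alpha + \Omega(\alpha\epsilon)$; this is immediate since translation is an automorphism and $\P_{G'}$-convolution commutes with it. Everything else is a routine application of Fourier inversion and the triangle inequality.
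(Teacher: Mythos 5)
Your proof is correct and is essentially the argument the paper intends (the paper omits the proof of Lemma \ref{lem.rmit} as ``easy'' and proves the exponent-$2$ analogue, Lemma \ref{lem.trivialdensityincrement}, by the same coset-averaging mechanism): take $G'=\ker\gamma$, of index at most the exponent of $G$, and convert the large Fourier coefficient of the balanced function $f=1_A-\alpha$ into a density increment on a coset using that $f\ast\P_{G'}$ has mean zero, so that $\|1_A\ast\P_{G'}\|_{L^\infty(G)}=\alpha+\sup_x (f\ast\P_{G'})(x)\geq\alpha+\epsilon\alpha/2$. One remark: your first argument via $\E_x(f\ast\P_{G'})_+(x)\geq\epsilon\alpha/2$ already suffices, so the mid-proof hedge comparing $L^1$ and $L^\infty$ norms is unnecessary --- and on its own it would only bound $\sup_x|f\ast\P_{G'}(x)|$, which could a priori be attained where the function is negative, so the mean-zero positive-part step is the one to keep.
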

The proof of this is easy and we shall use some similar results in \S\ref{sec.dense}; we make no improvement on this ingredient and, indeed, the lemma is in many ways best possible.

The core of the argument is the following lemma and it is here that we shall do better. The lemma expresses the fact that either a set $A$ is `uniform' having about the right number of three-term arithmetic progressions or else it has increased density on a subgroup of bounded index.
\begin{lemma}\label{lem.itrm}
Suppose that $G$ is a group of bounded exponent and $A \subset G$ has density $\alpha>0$. Then either $\Lambda(A) =\Omega(\alpha^3)$ or there is a subgroup $G' \leq G$ of bounded index such that $\|1_A \ast \P_{G'}\|_{L^\infty(G)} \geq \alpha+\Omega(\alpha^2)$.
\end{lemma}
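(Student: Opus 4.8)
The plan is to run the standard density-increment dichotomy via the Fourier transform, but carried out over a group of exponent $2$ where convolution with $\P_{G'}$ behaves well. First I would pass from $G = \Z_4^n$ to its exponent-$2$ quotient by the subgroup $2.G$; concretely, write $H := G/2.G \cong \Z_2^n$ and let $\pi : G \to H$ be the quotient map. The key positivity observation (the one alluded to just before Theorem~\ref{thm.lev}) is that the number of three-term progressions in $A$ is controlled from below by the corresponding count for the pushed-forward function $f := \pi_*(1_A)$ on $H$: since $x \mapsto x + 2.d$ only sees the $2.G$-coset through $d$, one has
\begin{equation*}
\Lambda(A) \;=\; \E_{x,d \in G} 1_A(x)1_A(x+d)1_A(x+2.d) \;\geq\; c\,\E_{u,v \in H} f(u) f(u+v) f(u+2.v),
\end{equation*}
and in $H$ the map $v \mapsto 2.v$ is identically $0_H$, so the right-hand side is just $\E_{u,v} f(u)f(u+v)f(u) = \E_u f(u)^2 \cdot \E_u f(u)$ --- wait, that is not quite it; more carefully the relevant quantity on $H$ is $\sum_\gamma \wh f(\gamma)^2 \overline{\wh f(\gamma)}$-type expression where the third character is trivial, giving $\E_u f(u)^2 \E_v f$. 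I would set this up precisely so that the Fourier expansion of $\Lambda(A)$ reduces, after the positivity step, to an expression of the shape $\sum_{\gamma \in \wh H} |\wh f(\gamma)|^2 \wh f(0_{\wh H})$ or similar, in which every summand is a \emph{nonnegative} contribution --- this nonnegativity is exactly what fails in odd-order groups and is the source of Theorem~\ref{thm.weak}'s strength.

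Second I would run the usual argument. If $\sup_{\gamma \neq 0} |\wh{1_A}(\gamma)| < \alpha^2$ (the large-spectrum threshold), then the Fourier expansion of $\Lambda(A)$, using the positivity from step one to discard the nonnegative error terms rather than bound them in absolute value, leaves the main term $\Omega(\alpha^3)$ and we are in the first alternative. If instead some nontrivial character $\gamma$ has $|\wh{1_A}(\gamma)| \geq \alpha^2 = \alpha \cdot \alpha$, then I invoke Lemma~\ref{lem.rmit} with $\epsilon = \alpha$: there is a subgroup $G' \leq G$ of bounded index (the annihilator of $\gamma$, or rather of $\gamma$ together with $2.G$ to keep things in the exponent-$2$ picture) with $\|1_A \ast \P_{G'}\|_{L^\infty(G)} \geq \alpha + \Omega(\alpha \cdot \alpha) = \alpha + \Omega(\alpha^2)$, which is the second alternative. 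The bounded-exponent hypothesis is what guarantees $[G:G']$ is bounded in terms of $\alpha$ only (each character cuts the index by a bounded factor).

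The main obstacle --- and the only genuinely delicate point --- is getting the positivity bookkeeping exactly right: one must verify that after pushing forward to the exponent-$2$ quotient, the ``cross terms'' in the Fourier expansion of the progression count are all real and nonnegative, so that throwing them away costs nothing and does not merely cost $O(\alpha^2)$. This is where the choice of $\Z_4^n$ (rather than an arbitrary group of bounded exponent, or an odd-order group) is essential: the doubling map $v \mapsto 2.v$ lands in the $2$-torsion, and on that quotient the character sum collapses to a sum of squares. A secondary point is matching the threshold: I want the density increment to be $\Omega(\alpha^2)$, which forces the large-spectrum cutoff at $\alpha^2$ rather than $\epsilon \alpha$ for fixed $\epsilon$, and one should check the main term $\Omega(\alpha^3)$ still survives at that cutoff (it does, since subtracting a spectrum of $\ell^2$-mass $\leq \alpha$ and $\ell^\infty$-bound $\leq \alpha^2$ from a main term of size $\alpha^3$ leaves $\Omega(\alpha^3)$ --- here again positivity lets us avoid even needing this estimate on the error, since the discarded terms have a sign). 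Once these are in place the lemma follows by combining the two cases. Everything else is routine Fourier analysis of the kind recorded in \S\ref{sec.ft}.
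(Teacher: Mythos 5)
Your second step is essentially the right argument, but the ``positivity bookkeeping'' that you yourself flag as the delicate point is done incorrectly, and the error is fatal as written. The correct expansion is $\Lambda(A)=\sum_{\gamma\in\wh{G}}\wh{1_A}(\gamma)^2\wh{1_A}(2.\gamma)$, and the terms split into two blocks: those with $2.\gamma=0_{\wh{G}}$ (equivalently, the characters of the quotient $G/2.G$ you pass to), which are indeed nonnegative because such $\gamma$ are real-valued, so that block is at least $\alpha\cdot\wh{1_A}(0_{\wh{G}})^2=\alpha^3$; and those with $2.\gamma\neq 0_{\wh{G}}$, which are \emph{not} sign-definite and cannot be discarded by positivity. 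That second block has to be bounded in absolute value by $\sup_{\gamma\neq 0_{\wh{G}}}|\wh{1_A}(\gamma)|\cdot\sum_{\gamma}|\wh{1_A}(\gamma)|^2=\alpha\sup_{\gamma\neq 0_{\wh{G}}}|\wh{1_A}(\gamma)|$ via Parseval, and this is exactly where the hypothesis of the first horn of the dichotomy (no nontrivial coefficient of size $\gg\alpha^2$) enters; without it nothing prevents this block from swamping $\alpha^3$. Your step one claims more, namely the unconditional inequality $\Lambda(A)\geq c\,\E_{u,v\in H}f(u)f(u+v)f(u+2.v)=c\alpha\|f\|_{L^2(H)}^2\geq c\alpha^3$, reiterated later as ``positivity lets us avoid even needing this estimate on the error.'' This is false: projecting to $H=G/2.G$ only constrains the first and third points of a progression modulo $2.G$, whereas in $G$ they must satisfy $x+z=2.u$ exactly; in the fibre language of \S\ref{sec.families}, the inner products $\langle\tau_y(1_{A_y}\ast 1_{A_y}),f_A\rangle_{L^2(\Im 2)}$ can be far smaller than $f_A(y)^2\alpha$, because $1_{A_y}\ast 1_{A_y}$ may be concentrated where the relevant translate of $f_A$ vanishes. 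Indeed, an unconditional bound $\Lambda(A)=\Omega(\alpha^3)$ would force every progression-free subset of $\Z_4^n$ to have size $O(|G|^{3/4})$ (compare $\alpha^3|G|^2$ with the $O(\alpha|G|^{3/2})$ trivial progressions), contradicting the $\Omega(|G|^{\log 3/\log 4}/\sqrt{\log|G|})$ construction quoted in \S\ref{sec.obstacles}; it would also make the dichotomy, and the rest of the paper, unnecessary.

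If you repair step one by applying positivity only to the $2.\gamma=0_{\wh{G}}$ block (Lev's observation, exactly as above) and control the $2.\gamma\neq 0_{\wh{G}}$ block with the sup hypothesis, then your step two --- threshold of order $\alpha^2$, followed by Lemma \ref{lem.rmit} with $\epsilon=\Omega(\alpha)$ --- is precisely the paper's proof. Two smaller points: the lemma is stated for an arbitrary group of bounded exponent and the argument needs nothing special about $\Z_4^n$ (for odd order the real-character block reduces to the trivial character and one recovers the classical estimate), so the ``essential'' role you ascribe to $\Z_4^n$ belongs to Theorem \ref{thm.weak} rather than to this lemma; and at cutoff exactly $\alpha^2$ the error term is of the same order $\alpha^3$ as the main term, so take the cutoff to be $\alpha^2/2$, say, to conclude $\Lambda(A)\geq\alpha^3/2$ in the first case.
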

\begin{proof}[Proof (sketch)]
By the usual application of the inversion formula one has
\begin{equation*}
\Lambda(A) = \sum_{\gamma \in \wh{G}}{\wh{1_A}(\gamma)^2\wh{1_A}(2.\gamma)}.
\end{equation*}
We write $H:=\{\gamma \in \wh{G}:2.\gamma = 0_{\wh{G}}\}$ so that 
\begin{equation*}
\Lambda(A) = \alpha.\sum_{\gamma \in H}{\wh{1_A}(\gamma)^2} + O(\sup_{\gamma \neq 0_{\wh{G}}}{|\wh{1_A}(\gamma)|}\alpha)
\end{equation*} 
by Parseval's theorem. Note that if $\gamma \in H$ then $\gamma$ is a real character so $|\wh{1_A}(\gamma)|^2=\wh{1_A}(\gamma)^2$, thus we certainly have
\begin{equation*}
\sum_{\gamma \in H}{\wh{1_A}(\gamma)^2}=\sum_{\gamma \in H}{|\wh{1_A}(\gamma)|^2} \geq |\wh{1_A}(0_{\wh{G}})|^2=\alpha^2.
\end{equation*}
This is the previously mentioned positivity observation of Lev. It follows that either $\Lambda(A) \geq \alpha^3/2$ and we are done or $\sup_{\gamma \neq 0_{\wh{G}}}{|\wh{1_A}(\gamma)|} = \Omega(\alpha^2)$ in which case we apply Lemma \ref{lem.rmit} and are done.
\end{proof}
Now, the above lemma can be iterated to get Theorem \ref{thm.weak}, and again we shall use essentially the same style of iteration in \S\ref{sec.end} to prove Theorem \ref{thm.weighted}.
\begin{proof}[Proof of Theorem {\ref{thm.weak}} (sketch)]
We apply the preceding lemma repeatedly incrementing the density at each stage that we are in the second case of the lemma and terminating if we are in the first case.

At each stage we have $\alpha \mapsto \alpha+ \Omega(\alpha^2)$.  Thus, after $O(\alpha^{-1})$ iterations the density will have doubled. Since density cannot increase above $1$ we have that the iteration terminates after
\begin{equation*}
O(\alpha^{-1}) + O((2\alpha)^{-1}) + O((4\alpha^{-1})) + \dots = O(\alpha^{-1})
\end{equation*}
steps.

When the iteration terminates we have some group $G' \leq G$ with $|G:G'| = \exp(O(\alpha^{-1}))$ such that
\begin{equation*}
\Lambda(A)=\Omega(\alpha^3|G:G'|^2)=\exp(O(\alpha^{-1})).
\end{equation*}
The result follows.
\end{proof}

We shall exploit some of the additional structure of $\Z_4^n$ to effectively improve Lemma \ref{lem.itrm} and thereby gain our strengthening of the Roth-Meshulam-Lev argument.

In $G=\Z_4^n$ a triple $(x,y,z)$ with $x+z=2.y$ must have $x$ and $z$ in the same coset of $\Im 2$, where $2$ denotes the map $x \mapsto 2.x$. Thus it is natural to partition $A$ by the cosets of $\Im 2$, because when counting three-term arithmetic progressions we only ever need to consider sums $x+z$ with $x$ and $z$ in the same coset.

Since $\Im2= \ker 2$ we shall index the elements of this partition of $A$ by elements of $\ker 2$ and, for simplicity later, translate them all so that they lie in $\Im 2$.  Specifically then, we proceed as follows.

Suppose that $G$ is a finite abelian group and $A\subset G$. Define
\begin{equation*}
f_A:\Im2 \rightarrow [0,1]; u \mapsto \E_{z \in G:2.z=u}{1_A(z)},
\end{equation*}
and note that
\begin{eqnarray*}
\Lambda(A)& = & \E_{x,d \in G}{1_A(x)1_A(x+2.d)\E_{d' \in G: 2.d'=2.d}{1_A(x+d')}}\\ & = & \E_{x,d \in G}{1_A(x)1_A(x+2.d)f_A(2.(x+d))}\\ & = & \E_{x,u \in G}{1_A(x)1_A(2.u-x)f_A(2.u)}.
\end{eqnarray*}
Now, for each $y \in \Im 2$ let $t_y \in G$ be such that $2.t_y=y$, and let $A_y:=A \cap (t_y+\ker 2) -t_y \subset \ker 2$. Furthermore, for each $y \in \Im 2$ let $\tau_y$ be `translation by $y$' defined by
\begin{equation*}
\tau_y:L^1(\Im2) \rightarrow L^1(\Im2); f \mapsto (x \mapsto f(x+y)).
\end{equation*}
In this notation we have
\begin{eqnarray}
\label{eqn.ke}\Lambda(A)& = & \E_{y \in \Im2}{\E_{x \in t_y+\ker 2, v \in \Im2}{1_{A_y}(x-t_y)1_{A_y}(v-x-t_y)f_A(v)}}\\ \nonumber & = & \E_{y \in \Im2}{\E_{z \in \ker 2, v \in \Im2}{1_{A_y}(z)1_{A_y}(v-y-z)f_A(v)}}\\ \nonumber & = &\E_{y \in \Im2}{\langle \tau_y(1_{A_y} \ast 1_{A_y}), f_A\rangle_{L^2(\Im 2)}}.
\end{eqnarray}
Note that the convolution here denotes convolution on $\ker2$ since this is where the sets $A_y$ have been arranged to live.  In $\Z_4^n$ we have that $\Im2=\ker 2$ which simplifies this expression so that it only involves one group.

Our argument will consider two cases depending on whether or not $f_A$ supports large $L^2$-mass or not.
\begin{enumerate}
\item \emph{(Large $L^2$-mass)} Suppose that $\|f_A\|_{L^2(\Im 2)}^2 \geq \alpha^{5/3}$. Then, on average $A$ has density $\alpha^{2/3}$ on the fibres of the points in $2.A$. We wish to estimate inner products of the form $\langle \tau_y(1_{A_y} \ast 1_{A_y}),f_A\rangle_{L^2(\Im 2)}$ where the set $A_y$ is the fibre of $y$. Plancherel's theorem tells us that
\begin{eqnarray*}
\langle \tau_y(1_{A_y} \ast 1_{A_y}),f_A \rangle_{L^2(\Im 2)} & = & \sum_{\gamma \in \wh{G}}{|\wh{1_{A_y}}(\gamma)|^2\gamma(y)\wh{f_A}(\gamma)}\\ & = & \alpha_y^2\alpha + O(\sup_{\gamma \neq 0_{\wh{G}}}{|\wh{f_A}(\gamma)|}\alpha_y)
\end{eqnarray*}
where $\alpha_y$ is the density of the fibre. If $\alpha_y \geq \alpha^{2/3}$ then we get a non-trivial character at which $\wh{f_A}(\gamma) =\Omega(\alpha^{5/3})$. This leads to a corresponding density increment which could only be iterated $O(\alpha^{-2/3})$ times before the density would have to exceed 1.
\item \emph{(Small $L^2$-mass)} Suppose that $\|f_A\|_{L^2(\Im 2)}^2 \leq \alpha^{5/3}$. Then it follows that $2.A$ has density at least $\alpha^{1/3}$. We now replace $f_A$ with $1_{2.A}$ and find, in much the same way as above, that we have a non-trivial Fourier mode (this time of a fibre) of size $\Omega(\alpha^{5/3})$. If one could now perform a density increment in a way that was simultaneous for all fibres then this could only happen $O(\alpha^{-2/3})$ times.
\end{enumerate}
These two cases would combine to suggest that $A$ contained $\exp(-O(\alpha^{-2/3}))$ three-term arithmetic progressions. Unfortunately the second is too optimistic; the content of this paper is in making a version of the above sketch work and, in particular, dealing with the harder case of small $L^2$-mass.

\section{Families}\label{sec.families}

We make a new definition which we shall work with for the remainder of the paper to help simplify some of the inductive steps later and which should seem fairly natural given the discussion of the previous section.

Suppose that $H$ is a finite (abelian) group of exponent $2$. A \emph{family} on $H$ is a vector $\mathcal{A}=(A_h)_{h \in H}$ where $A_h \subset H$ for all $h \in H$; we call the set $A_h$ a \emph{fibre} of $\mathcal{A}$. We define the \emph{density function} of $\mathcal{A}$ to be
\begin{equation*}
f_\mathcal{A}:H \rightarrow [0,1]; h \mapsto \P_H(A_h),
\end{equation*}
and refer to $\E_{x \in H}{f_\mathcal{A}(x)}$ as the \emph{density} of $\mathcal{A}$ denoted $\P_H(\mathcal{A})$. 

We are interested in the quantity
\begin{equation*}
\Lambda(\mathcal{A}):=\E_{h \in H}{\langle \tau_h(1_{A_h} \ast 1_{A_h}),f_\mathcal{A} \rangle_{L^2(H)}},
\end{equation*}
and it is useful to note that $|H|^4\Lambda(\mathcal{A})$ is the number of quadruples $(a,a',y,h)$ with $a,a' \in A_h$ and $y \in A_{a+a'-h}$.

If $A \subset \Z_4^n$ then the family $\mathcal{A}:=(A_y)_{y \in \Im2}$ defined earlier for use in (\ref{eqn.ke}) has $\Lambda(\mathcal{A})=\Lambda(A)$ and density $\alpha$. Conversely, given any family $\mathcal{A}$ on $\Z_2^n$ we can clearly construct a set $A$ in $\Z_4^n$ such that $\Lambda(A)=\Lambda(\mathcal{A})$; families are simply a notational convenience. The bulk of the paper now concerns the proof that $\Lambda(\mathcal{A})$ is large in terms of the density of $\mathcal{A}$.

\section{Density increments on families}\label{sec.dense}

The arguments of this section are straightforward and encode the various ways in which we shall try to increment the density of our family under certain circumstances. The simplest of these is the standard $\ell^\infty$-density increment lemma which follows.
\begin{lemma}\label{lem.trivialdensityincrement}
Suppose that $H$ is a finite abelian group of exponent $2$, $f:H \rightarrow [0,1]$ and $\gamma$ is a non-trivial character. Then the subgroup $H':=\{\gamma\}^\perp$ has index $2$ and $\|f \ast \P_{H'}\|_{L^\infty(H)} = \E_{h \in H}{f(h)} + |\wh{f}(\gamma)|$.
\end{lemma}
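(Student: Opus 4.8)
The plan is to compute $f \ast \P_{H'}$ completely explicitly; it will turn out to be constant on each of the two cosets of $H'$, and reading off the two values gives the statement at once.

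First I would settle the claim about the index. Since $H$ has exponent $2$, every character of $H$ takes values in $\{-1,1\}\leq S^1$, so the non-trivial character $\gamma$ is a surjective homomorphism $\gamma:H\rightarrow\{-1,1\}$, whence $H':=\{\gamma\}^\perp=\ker\gamma$ has index $|\{-1,1\}|=2$. In particular $H$ is partitioned into the two cosets $H'$ and $H\setminus H'$, each of size $|H|/2$, with $\gamma\equiv 1$ on the former and $\gamma\equiv -1$ on the latter.

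Next I would unwind the convolution. Viewing $\P_{H'}$ as a function on $H$, it equals $|H:H'|\,1_{H'}$ (the Radon--Nikodym derivative of the measure $\P_{H'}$ with respect to $\P_H$); using this together with the fact that $h-y=h+y$ on a group of exponent $2$, one computes for each $h\in H$
\[
(f\ast\P_{H'})(h)=\E_{y\in H}{f(y)\P_{H'}(h-y)}=|H:H'|\E_{y\in H}{f(y)1_{H'}(h+y)}=\E_{y\in h+H'}{f(y)},
\]
the average of $f$ over the coset of $H'$ containing $h$. Hence $f\ast\P_{H'}$ is constant on each of the two cosets, with value $\beta_0:=\E_{y\in H'}{f(y)}$ on $H'$ and $\beta_1:=\E_{y\in H\setminus H'}{f(y)}$ on $H\setminus H'$.

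Finally I would relate $\beta_0,\beta_1$ to the quantities in the statement. Writing $\beta:=\E_{h\in H}{f(h)}$ and using that the two cosets split $H$ into halves gives $\beta=\tfrac12(\beta_0+\beta_1)$, while (using again that $\gamma$ is $\pm1$-valued, hence real) $\wh{f}(\gamma)=\E_{h\in H}{f(h)\overline{\gamma(h)}}=\E_{h\in H}{f(h)\gamma(h)}=\tfrac12(\beta_0-\beta_1)$; in particular $\wh{f}(\gamma)$ is real. Thus $\beta_0=\beta+\wh{f}(\gamma)$ and $\beta_1=\beta-\wh{f}(\gamma)$, and so
\[
\|f\ast\P_{H'}\|_{L^\infty(H)}=\max(\beta_0,\beta_1)=\beta+|\wh{f}(\gamma)|,
\]
which is the assertion. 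There is no genuine obstacle here; the only points requiring care are the normalisation of $\P_{H'}$ as a function and the repeated use of the exponent-$2$ hypothesis, both to replace $h-y$ by $h+y$ in the convolution and to guarantee that $\gamma$, and hence $\wh{f}(\gamma)$, is real.
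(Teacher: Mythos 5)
Your proof is correct and follows essentially the same route as the paper's: both exploit the two-coset decomposition induced by $\ker\gamma$, identify $f\ast\P_{H'}$ with the coset averages of $f$, and combine the identities $\E_{h\in H}f(h)=\tfrac12(\beta_0+\beta_1)$ and $|\wh{f}(\gamma)|=\tfrac12|\beta_0-\beta_1|$ to read off the maximum. Your write-up is just slightly more explicit about the index-$2$ claim and the normalisation $\P_{H'}=2\cdot 1_{H'}\,\P_H$, which the paper states without comment.
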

\begin{proof}
Let $h_0 \in H \setminus H'$ so that $h_0+H'$ is the coset of $H'$ in $H$ not equal to $H'$. By definition
\begin{equation*}
|\wh{f}(\gamma)| = |\E_{h \in H}{1_{H'}(h)f(h)} - \E_{h \in H}{1_{h_0+H'}(h)f(h)}|.
\end{equation*}
We also have
\begin{equation*}
\E_{h \in H}{f(h)}=\E_{h \in H}{1_{H'}(h)f(h)} + \E_{h \in H}{1_{h_0+H'}(h)f(h)},
\end{equation*}
which on being added to the previous tells us that
\begin{equation*}
2.\max\{\E_{h \in H}{1_{H'}(h)f(h)},\E_{h \in H}{1_{h_0+H'}(h)f(h)}\} = \E_{h \in H}{f(h)}+|\wh{f}(\gamma)|.
\end{equation*}
Since the index of $H'$ in $H$ is $2$ we have that $2(\P_H)|_{H'}=\P_{H'}$, whence
\begin{equation*}
\max\{(f \ast \P_{H'})(0_H), (f \ast \P_{H'})(h_0)\} = \E_{h \in H}{f(h)}+|\wh{f}(\gamma)|,
\end{equation*}
and the result follows.
\end{proof}
The next lemma is a sort of simultaneous version of the above. If a family has a large number of its fibres having a large Fourier coefficient at the same non-trivial character $\gamma$ then there is a related family with increased density.
\begin{lemma}\label{lem.coreincrement}
Suppose that $H$ is a finite abelian group of exponent $2$, $\mathcal{A}=(A_h)_{h \in H}$ is a family on $H$ and $\gamma$ is a non-trivial character. Then there is a subgroup $H'\leq H$ of index $2$  and a family $\mathcal{A}'$ on $H'$ such that
\begin{equation*}
\Lambda(\mathcal{A}) \geq 2^{-4}\Lambda(\mathcal{A}') \textrm{ and } \P_{H'}(\mathcal{A}') \geq \P_H(\mathcal{A}) + \E_{h \in H}{|\wh{1_{A_h}}(\gamma)|}.
\end{equation*}
\end{lemma}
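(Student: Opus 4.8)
The plan is to use $\gamma$ to split each fibre and then, on each fibre, to run the $\ell^\infty$-density increment of Lemma~\ref{lem.trivialdensityincrement} but in a way that is coherent across the whole family. Write $H'=\{\gamma\}^\perp$, which has index $2$ by that lemma, and fix $h_0\in H\setminus H'$. For a subset $B\subset H$ I will write $B^{(0)}:=B\cap H'$ and $B^{(1)}:=(B\cap(h_0+H'))-h_0$, both viewed as subsets of $H'$; thus Lemma~\ref{lem.trivialdensityincrement} applied to $f=1_B$ says $\max\{\P_{H'}(B^{(0)}),\P_{H'}(B^{(1)})\}=\P_H(B)+|\wh{1_B}(\gamma)|$. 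For each $h\in H$ let $\epsilon(h)\in\{0,1\}$ be a choice of index achieving this maximum for the fibre $A_h$, and set $A'_h:=A_h^{(\epsilon(h))}\subset H'$. Since the index set $H$ of the family is still the wrong group, I will also need to re-index: the natural move is to take the new family to be $\mathcal{A}'=(A'_h)_{h\in H'}$ on $H'$, discarding the fibres indexed by $h\in h_0+H'$ — but this loses a factor I must control, so more care is needed (see below).

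For the density bound: by construction $\P_{H'}(A'_h)=\P_H(A_h)+|\wh{1_{A_h}}(\gamma)|$ for \emph{every} $h\in H$, so averaging over any set of $h$'s of the form $H'$ would only give the desired inequality if the average of $\P_H(A_h)+|\wh{1_{A_h}}(\gamma)|$ over $H'$ is at least its average over all of $H$; that need not hold. The fix is to choose the sub-family index set wisely: partition $H$ into the cosets $H'$ and $h_0+H'$, observe that on \emph{one} of these two cosets $C$ the average of $\P_H(A_h)$ is at least $\P_H(\mathcal{A})$, and — separately — that the average of $|\wh{1_{A_h}}(\gamma)|$ over $C$ could be small on that same coset. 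So instead I keep all fibres but re-index them through a bijection $H\to H'$; concretely, declare the new family to be $\mathcal{A}'=(A'_{\phi(h)})_{h\in H'}$ for a suitable bijection $\phi\colon H'\to H$. Since density is an average over the index group and $|H|=2|H'|$, re-indexing by any bijection preserves the average, so $\P_{H'}(\mathcal{A}')=\E_{h\in H}\big(\P_H(A_h)+|\wh{1_{A_h}}(\gamma)|\big)=\P_H(\mathcal{A})+\E_{h\in H}|\wh{1_{A_h}}(\gamma)|$, exactly as required.

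For the $\Lambda$ bound I will unwind the combinatorial description given after the definition of $\Lambda$: $|H'|^4\Lambda(\mathcal{A}')$ counts quadruples $(a,a',y,h)$ with $h\in H'$, $a,a'\in A'_{\phi(h)}$ and $y\in A'_{\phi(a+a'-h)}$ (the last index computed in $H'$). Each such quadruple lifts, under $\phi$ and the coset embeddings $H'\hookrightarrow H$ (shifting by $0$ or $h_0$ according to the relevant $\epsilon$), to a genuine quadruple counted by $|H|^4\Lambda(\mathcal{A})$: indeed $A'_{\phi(h)}\subset A_{\phi(h)}$ up to the translation by $\epsilon(\cdot)h_0$, and translating all four coordinates by elements of the two-element coset set changes the "$a+a'-h$" index only by an element of $\{0_H,h_0\}$, which one checks is absorbed correctly because $2.h_0=0_H$ in an exponent-$2$ group. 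The lift is injective, and the number of sign/coset patterns involved is at most $2^4=16$, while $|H|^4=16|H'|^4$; comparing counts gives $\Lambda(\mathcal{A})\ge 2^{-4}\Lambda(\mathcal{A}')$.

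The main obstacle I expect is the bookkeeping in this last step: making precise how a quadruple for $\mathcal{A}'$ sits inside a quadruple for $\mathcal{A}$ once the fibres have been cut down to a coset of $H'$, translated, and re-indexed by $\phi$ — in particular verifying that the constraint $y\in A'_{\phi(a+a'-h)}$ really does pull back to $y+(\text{shift})\in A_{\phi(a+a'-h)}$ with the shift lying in $\{0_H,h_0\}$, so that the whole thing is one of at most $16$ cases. Everything else (the index-$2$ claim, the density identity, the harmless effect of re-indexing on the average) is immediate from Lemma~\ref{lem.trivialdensityincrement} and the definitions.
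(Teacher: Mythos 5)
There is a genuine gap at the re-indexing step. Your per-fibre application of Lemma \ref{lem.trivialdensityincrement} is fine, but your proposed fix --- ``keep all fibres but re-index them through a bijection $\phi\colon H'\to H$'' --- cannot work as stated: $|H|=2|H'|$, so no such bijection exists, and a family on $H'$ has by definition only $|H'|$ fibres, so you cannot keep all $|H|$ of them. More seriously, even if one replaced ``bijection'' by some two-to-one identification, an \emph{arbitrary} re-indexing map destroys exactly the structure the second half of the lemma needs: the count $|H'|^4\Lambda(\mathcal{A}')$ is over quadruples satisfying the additive constraint $y'\in A'_{a_0'+a_1'-h'}$, and for the lift to land in a quadruple counted by $\Lambda(\mathcal{A})$ you need the index map to respect the expression $a_0+a_1-h$. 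A generic $\phi$ gives no relation between $\phi(a_0'+a_1'-h')$ and $\phi(a_0')+\phi(a_1')-\phi(h')$, so the claim that each quadruple ``lifts to a genuine quadruple counted by $|H|^4\Lambda(\mathcal{A})$'' fails; the $2^4$ coset-pattern count does not rescue this.

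The worry that drove you to this detour is also unfounded. You do not need the averages of $\P_H(A_h)$ and of $|\wh{1_{A_h}}(\gamma)|$ to be simultaneously large on the same coset: having set $B_h:=A_h\cap(x_h+H')-x_h$ with $\P_{H'}(B_h)\geq f_{\mathcal{A}}(h)+|\wh{1_{A_h}}(\gamma)|$, one averages the \emph{combined} quantity $\P_{H'}(B_h)$ over $h\in H$, and then by the pigeonhole principle some coset $h_1+H'$ has $\E_{h\in h_1+H'}\P_{H'}(B_h)$ at least the global average $\P_H(\mathcal{A})+\E_{h\in H}|\wh{1_{A_h}}(\gamma)|$. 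One then re-indexes only the fibres of that coset, by the \emph{translation} $h'\mapsto h_1+h'$, setting $A'_{h'}:=B_{h_1+h'}$. Because this re-indexing is a translation and the shifts satisfy $2.x_h=0_H$ and $2.h_1=0_H$, one has $a_0+a_1-h=a_0'+a_1'-h'+h_1$ for the lifted quadruple, so the constraint pulls back correctly and the injective lift gives $|H'|^4\Lambda(\mathcal{A}')\leq|H|^4\Lambda(\mathcal{A})=2^4|H'|^4\Lambda(\mathcal{A})$. In short: choose a good coset by averaging the already-incremented fibre densities, and re-index affinely; discarding the other coset costs nothing in the density bound and is what makes the $\Lambda$ comparison go through.
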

\begin{proof}
Let $H':=\{\gamma\}^\perp$ and let $h_0 \in H \setminus H'$ so that $h_0+H'$ is the coset of $H'$ in $H$ not equal to $H'$. For each $h \in H$ apply Lemma \ref{lem.trivialdensityincrement} to see that
\begin{equation*}
\|1_{A_h} \ast \P_{H'}\|_{L^\infty(H)} \geq \E_{\tilde{h} \in H}{1_{A_h}(\tilde{h})} + |\wh{1_{A_h}}(\gamma)|.
\end{equation*}
Now let $x_h \in H$ be such that $\|1_{A_h} \ast \P_{H'}\|_{L^\infty(H)} =(1_{A_h}\ast \P_{H'})(x_h)$ and define $B_h:=A_h \cap (x_h+H') - x_h \subset H'$, whence
\begin{equation*}
\P_{H'}(B_h) \geq \E_{\tilde{h} \in H}{1_{A_h}(\tilde{h})} + |\wh{1_{A_h}}(\gamma)|=f_\mathcal{A}(h)+|\wh{1_{A_h}}(\gamma)|.
\end{equation*}
It follows that
\begin{equation*}
\E_{h \in H}{\P_{H'}(B_h) } \geq \E_{h \in H}{f_\mathcal{A}(h)}+\E_{h \in H}{|\wh{1_{A_h}}(\gamma)|},
\end{equation*}
whence by averaging there is a coset $h_1+H'$ of $H$ such that
\begin{equation*}
\E_{h \in h_1+H'}{\P_{H'}(B_h) } \geq \E_{h \in H}{f_\mathcal{A}(h)}+\E_{h \in H}{|\wh{1_{A_h}}(\gamma)|}.
\end{equation*}
Now we define a family $\mathcal{A}'$ on $H'$ as follows: for each $h' \in H'$ let $A_{h'}':=B_{h_1+h'}$. It is immediate that $\mathcal{A}'$ has the required density; it remains to show that $\Lambda(\mathcal{A}) \geq 2^{-4}\Lambda(\mathcal{A}')$, which is a relatively simple counting exercise.

There are $|H'|^4\Lambda(\mathcal{A}')$ quadruples $(a_0',a_1',y',h')$ with $a_0', a_1' \in A_{h'}'$ and $y' \in A_{a_0'+a_1'-h'}'$. Every such quadruple corresponds uniquely to a quadruple
\begin{equation*}
(a_0,a_1,y,h):=(a_0'+x_{h_1+h'},a_1'+x_{h_1+h'},y'+x_{a_0'+a_1'-h'+h_1},h_1+h');
\end{equation*}
unique since there is an obvious inverse on the image taking $(a_0,a_1,y,h)$ to
\begin{equation*}
(a_0',a_1',y',h')=(a_0-x_h,a_1-x_h,y-x_{a_0+a_1-h-2.x_h+2.h_1},h-h_1).
\end{equation*}
Now,
\begin{equation*}
a_0 = a_0'+x_{h_1+h'} \in A_h' +x_{h_1+h'} = B_{h_1+h'} + x_{h_1+h'} \subset A_{h_1+h'}=A_h,
\end{equation*}
and similarly $a_1 \in A_h$. Furthermore
\begin{eqnarray*}
y=y'+x_{a_0'+a_1'-h'+h_1} &\in &A_{a_0'+a_1'-h'}' + x_{a_0'+a_1'-h'+h_1} \\& = & B_{a_0'+a_1'-h'+h_1}+x_{a_0'+a_1'-h'+h_1}\\ & \subset & A_{a_0'+a_1'-h'+h_1} = A_{a_0+a_1-h}
\end{eqnarray*}
since $2.x_{h_1+h'}=0_H$ and $2.h_1=0_H$. It follows that every quadruple $(a_0',a_1',y',h')$ with $a_0',a_1' \in A_{h'}'$ and $y' \in A_{a_0'+a_1'-h'}'$ corresponds to a unique quadruple $(a_0,a_1,y,h)$ with $a_0,a_1 \in A_h$ and $y \in A_{a_0+a_1-h}$, whence $|H'|^4\Lambda(\mathcal{A}) \leq |H|^4\Lambda(\mathcal{A})$ and the result follows on noting that $|H|^4 = 2^4 |H'|^4$.
\end{proof}
The last part of the above proof is a rather fiddly verification of a type which we shall have to do repeatedly, and while we have been rather comprehensive in the details above, in the future we shall include fewer of them.

The final lemma of the section takes a family where the density function is non-uniform and produces a new family with a larger density, again very much in the spirit of the previous to lemmas.
\begin{lemma}\label{lem.coreincrement2}
Suppose that $H$ is a finite abelian group of exponent $2$, $\mathcal{A}=(A_h)_{h \in H}$ is a family on $H$ and $\gamma$ is a non-trivial character. Then there is a subgroup $H'\leq H$ of index $2$  and a family $\mathcal{A}'$ on $H'$ such that
\begin{equation*}
\Lambda(\mathcal{A}) \geq 2^{-4}\Lambda(\mathcal{A}') \textrm{ and } \P_{H'}(\mathcal{A}') \geq \P_H(\mathcal{A}) + |\wh{f_\mathcal{A}}(\gamma)|.
\end{equation*}
\end{lemma}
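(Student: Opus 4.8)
The plan is to amalgamate the two preceding lemmas, with the twist that the density gain is now extracted from the non-uniformity of $f_\mathcal{A}$ rather than from the individual fibres, so the two applications of Lemma \ref{lem.trivialdensityincrement} play interchanged roles compared with the proof of Lemma \ref{lem.coreincrement}. As there, I would set $H' := \{\gamma\}^\perp$, which has index $2$ in $H$ by Lemma \ref{lem.trivialdensityincrement}.

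First I would restrict each fibre to a coset of $H'$ without losing any mass: for each $h \in H$ apply Lemma \ref{lem.trivialdensityincrement} to $1_{A_h}$ and discard the non-negative term $|\wh{1_{A_h}}(\gamma)|$ to obtain $x_h \in H$ with $(1_{A_h} \ast \P_{H'})(x_h) \geq f_\mathcal{A}(h)$; then put $B_h := (A_h \cap (x_h+H')) - x_h \subset H'$, so that $\P_{H'}(B_h) \geq f_\mathcal{A}(h)$. Separately, apply Lemma \ref{lem.trivialdensityincrement} directly to $f_\mathcal{A}$: its proof exhibits $\P_H(\mathcal{A}) + |\wh{f_\mathcal{A}}(\gamma)|$ as the larger of the two averages of $f_\mathcal{A}$ over the cosets of $H'$, so there is a coset $h_1 + H'$ with $\E_{h \in h_1+H'}f_\mathcal{A}(h) \geq \P_H(\mathcal{A}) + |\wh{f_\mathcal{A}}(\gamma)|$. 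Now define $\mathcal{A}'$ on $H'$ by $A_{h'}' := B_{h_1+h'}$ for $h' \in H'$; its density is $\E_{h' \in H'}\P_{H'}(B_{h_1+h'}) \geq \E_{h' \in H'}f_\mathcal{A}(h_1+h') = \E_{h \in h_1+H'}f_\mathcal{A}(h) \geq \P_H(\mathcal{A}) + |\wh{f_\mathcal{A}}(\gamma)|$, which is the required density bound.

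It remains to verify $\Lambda(\mathcal{A}) \geq 2^{-4}\Lambda(\mathcal{A}')$, and here I would simply re-run the counting argument of Lemma \ref{lem.coreincrement}: each of the $|H'|^4\Lambda(\mathcal{A}')$ quadruples $(a_0',a_1',y',h')$ with $a_0',a_1' \in A_{h'}'$ and $y' \in A_{a_0'+a_1'-h'}'$ maps to $(a_0'+x_{h_1+h'},\,a_1'+x_{h_1+h'},\,y'+x_{h_1+a_0'+a_1'-h'},\,h_1+h')$, which --- since $2.x_{h_1+h'}=0_H$ and $2.h_1=0_H$, so that $a_0+a_1-h = h_1 + (a_0'+a_1'-h')$ --- is one of the $|H|^4\Lambda(\mathcal{A})$ quadruples $(a_0,a_1,y,h)$ with $a_0,a_1 \in A_h$ and $y \in A_{a_0+a_1-h}$, and the map is injective since $(a_0,a_1,y,h) \mapsto (a_0-x_h,\,a_1-x_h,\,y-x_{a_0+a_1-h},\,h-h_1)$ inverts it. Thus $|H'|^4\Lambda(\mathcal{A}') \leq |H|^4\Lambda(\mathcal{A})$ and the claim follows from $|H| = 2|H'|$. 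The only delicate point is this final verification, and as it duplicates the one already done in full for Lemma \ref{lem.coreincrement} I would refer to that rather than repeat it.
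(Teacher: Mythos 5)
Your proposal is correct and follows essentially the same route as the paper: pass to $H'=\{\gamma\}^\perp$, use Lemma \ref{lem.trivialdensityincrement} applied to $f_\mathcal{A}$ to select the good coset $h_1+H'$, restrict each fibre to its denser $H'$-coset (your application of Lemma \ref{lem.trivialdensityincrement} to $1_{A_h}$ with the $|\wh{1_{A_h}}(\gamma)|$ term discarded is exactly the paper's ``take the larger half'' choice of $x_{h_1+h'}\in\{0_H,h_0\}$), and then run the same injective quadruple-counting map to get $\Lambda(\mathcal{A})\geq 2^{-4}\Lambda(\mathcal{A}')$. The paper likewise treats that final verification as a repeat of the one in Lemma \ref{lem.coreincrement}, so deferring to it is in keeping with the original argument.
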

\begin{proof}
Let $H':=\{\gamma\}^\perp$ and let $h_0 \in H \setminus H'$ so that $h_0+H'$ is the coset of $H'$ in $H$ not equal to $H'$. Apply Lemma \ref{lem.trivialdensityincrement} so that we have
\begin{equation*}
\|f_\mathcal{A} \ast \P_{H'}\|_{L^\infty(H)} = \P_H(\mathcal{A}) + |\wh{f_\mathcal{A}}(\gamma)|.
\end{equation*}
Let $h_1 \in H'$ be such that $(f_\mathcal{A}\ast\P_{H'})(h_1)=\|f_\mathcal{A} \ast \P_{H'}\|_{L^\infty(H)}$. Now, define a family $\mathcal{A}'$ as follows: for each $h' \in H'$
\begin{enumerate}
\item if $A_{h_1+h'} \cap H'$ is larger than $A_{h_1+h'} \cap (h_0+H')$ then put $x_{h_1+h'}:=0_H$ and $A_{h'}':=A_{h_1+h'} \cap H'$;
\item otherwise put $x_{h_1+h'}:=h_0$ and $A_{h'}':=A_{h_1+h'} \cap (h_0+H')-h_0$.
\end{enumerate}
By averaging we have that $\P_{H'}(A_{h'}') \geq \P_H(A_{h_1+h'})$, whence
\begin{equation*}
\E_{h' \in H'}{\P_{H'}(A_{h'}')} \geq (f_\mathcal{A}\ast\P_{H'})(h_1) = \P_H(\mathcal{A}) + |\wh{f_\mathcal{A}}(\gamma)|,
\end{equation*}
which yields the required density condition. It remains, as before, to show that $\Lambda(\mathcal{A}) \geq 2^{-4}\Lambda(\mathcal{A}')$; we proceed as in the previous lemma.

There are $|H'|^4\Lambda(\mathcal{A}')$ quadruples $(a_0',a_1',y',h')$ with $a_0', a_1' \in A_{h'}'$ and $y' \in A_{a_0'+a_1'-h'}'$. Every such quadruple corresponds uniquely to a quadruple
\begin{equation*}
(a_0,a_1,y,h):=(a_0'+x_{h_1+h'},a_1'+x_{h_1+h'},y'+x_{a_0'+a_1'-h'+h_1},h_1+h')
\end{equation*}
with $a_0,a_1 \in A_h$ and $y \in A_{a_0+a_1-h}$, whence $|H'|^4\Lambda(\mathcal{A}) \leq |H|^4\Lambda(\mathcal{A})$ and the result follows on noting that $|H|^4 = 2^4 |H'|^4$.
\end{proof}

\section{Families with large mean square density}

In this section we show how a family $\mathcal{A}$ for which $\|f_\mathcal{A}\|_{L^2(H)}$ is large (compared with its trivial lower bound of $\P_H(\mathcal{A})^2$) has $\Lambda(\mathcal{A})$ large. The basic idea is that if $\|f_\mathcal{A}\|_{L^2(H)}$ is large then most of the fibres $A_h$ have large density and so are more easily `uniformized'. When they are uniform the count $\Lambda(\mathcal{A})$ is easily seen to be large.

It is instructive to consider a simplified situation. Suppose that $\mathcal{A}$ is a family which is assumed to have fibres of density either $0$ or $\delta$ and the support of $f_\mathcal{A}$ has density $\sigma$. This family has density $\delta \sigma$ and $\|f_\mathcal{A}\|_{L^2(H)}^2=\delta^2\sigma$, which is large compared with the trivial lower bound of $(\delta\sigma)^2$ if $\sigma$ is small. Now, the standard Roth-Meshulam argument can be used to show that $\Lambda(\mathcal{A}) = \exp(O(\delta^{-1}\sigma^{-1}))$, and the proposition below asserts that this can be improved when $\sigma$ is small.
\begin{proposition}\label{prop.largeL}
Suppose that $H$ is a finite abelian group of exponent $2$ and $\mathcal{A}=(A_h)_{h \in H}$ is a family on $H$ such that $f_\mathcal{A}=\delta1_S$ for some $\delta \in (0,1]$ and $S \subset H$ of density $\sigma$. Then $\Lambda(\mathcal{A}) = \exp(-O(\delta^{-1}\log \sigma^{-1}))$
\end{proposition}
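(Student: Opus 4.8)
The plan is to run the standard density-increment iteration on the family $\mathcal{A}$, but measured against the $L^2$-mass rather than the raw density, so that the number of iterations is governed by $\sigma^{-1}$ rather than $(\delta\sigma)^{-1}$. First I would record the Fourier expansion of $\Lambda(\mathcal{A})$: by Plancherel applied to the inner product defining $\Lambda(\mathcal{A})$,
\begin{equation*}
\Lambda(\mathcal{A}) = \E_{h \in H}{\sum_{\gamma \in \wh{H}}{|\wh{1_{A_h}}(\gamma)|^2\gamma(h)\wh{f_\mathcal{A}}(\gamma)}}.
\end{equation*}
Separating the contribution of $\gamma=0_{\wh{H}}$, whose inner sum contributes $\E_{h}{\P_H(A_h)^2\cdot\P_H(\mathcal{A})}$, and bounding the remaining terms crudely by $\sup_{\gamma\neq 0}{|\wh{f_\mathcal{A}}(\gamma)|}\cdot\E_h{\|\wh{1_{A_h}}\|_{\ell^2}^2} = \sup_{\gamma\neq 0}{|\wh{f_\mathcal{A}}(\gamma)|}\cdot\P_H(\mathcal{A})$, we get a dichotomy: either
\begin{equation*}
\Lambda(\mathcal{A}) \geq \tfrac12\E_h{\P_H(A_h)^2}\,\P_H(\mathcal{A}) \geq \tfrac12\|f_\mathcal{A}\|_{L^2(H)}^2\,\P_H(\mathcal{A}),
\end{equation*}
using Cauchy--Schwarz ($\E_h{\P_H(A_h)^2}\geq (\E_h{\P_H(A_h)})^2$ is too weak; instead note $\E_h{\P_H(A_h)^2}=\delta^2\sigma=\|f_\mathcal{A}\|_{L^2}^2$ exactly here, or in general $\geq \|f_\mathcal{A}\|_{L^2}^2$ since $\P_H(A_h)^2\geq f_\mathcal{A}(h)^2$ is false — rather $f_\mathcal{A}(h)=\P_H(A_h)$ so it is an identity), or else $\sup_{\gamma\neq 0}{|\wh{f_\mathcal{A}}(\gamma)|} \geq \tfrac12\|f_\mathcal{A}\|_{L^2(H)}^2 / \P_H(\mathcal{A}) = \tfrac12\delta$ since $\|f_\mathcal{A}\|_{L^2}^2=\delta^2\sigma$ and $\P_H(\mathcal{A})=\delta\sigma$.

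In the second case I would apply Lemma \ref{lem.coreincrement2} at the large character $\gamma$ to pass to a subgroup $H'$ of index $2$ and a family $\mathcal{A}'$ on $H'$ with $\Lambda(\mathcal{A})\geq 2^{-4}\Lambda(\mathcal{A}')$ and $\P_{H'}(\mathcal{A}') \geq \P_H(\mathcal{A})+\Omega(\delta)$. The crucial bookkeeping point is that throughout the iteration the fibre densities stay bounded by, and in fact are exactly, some value at most $1$, but more importantly the density $\P(\mathcal{A})$ increases by $\Omega(\delta)$ each step while remaining at most $\min(1,\,\|f_{\mathcal{A}}\|_{L^2}\cdot 1)$ — I need to track that the density function never has $L^\infty$ norm exceeding $1$, so $\P(\mathcal{A})\leq \|f_\mathcal{A}\|_{L^\infty}\leq 1$, and since each increment is $\Omega(\delta)$, the iteration terminates (i.e. we land in the first, ``uniform'' case) after at most $O(\delta^{-1})$ steps. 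Wait — that would give $\exp(-O(\delta^{-1}))$, not $\exp(-O(\delta^{-1}\log\sigma^{-1}))$; the extra $\log\sigma^{-1}$ must come from controlling how $\|f_\mathcal{A}\|_{L^2}^2$ can degrade. The honest version is: the increment is $\Omega(\|f_\mathcal{A}\|_{L^2}^2/\P(\mathcal{A}))$, and as the density doubles from $\delta\sigma$ towards $1$ the ``effective $\delta$'' (namely $\|f_\mathcal{A}\|_{L^2}^2/\P(\mathcal{A})$) need not stay $\delta$; but one checks it stays $\gtrsim\P(\mathcal{A})$ (since $\|f_\mathcal{A}\|_{L^2}^2\geq\P(\mathcal{A})^2$ always), so the increment is $\Omega(\P(\mathcal{A}))$, i.e. geometric growth $\P(\mathcal{A})\mapsto(1+\Omega(1))\P(\mathcal{A})$, giving $O(\log(1/(\delta\sigma)))=O(\log\delta^{-1}+\log\sigma^{-1})$ steps. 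To match the stated bound one then absorbs $\log\delta^{-1}$ into the final exponential, since $\log\delta^{-1}\leq\delta^{-1}$ — actually the clean statement comes from noticing the first $\asymp\log\delta^{-1}$ steps (while $\P(\mathcal{A})$ climbs from $\delta\sigma$ to $\sigma$) have increment $\asymp\delta$ costing $O(\delta^{-1})$... I would reconcile this by splitting the iteration at the threshold $\P(\mathcal{A})\asymp\sigma$: below it use the fixed increment $\Omega(\delta)$ — but there are only $O(\delta^{-1}\cdot\sigma)=O(1)$... no. The cleanest route, which I would write up, is simply to track $\P(\mathcal{A})$ and use increment $\Omega(\delta)$ throughout (valid as long as $\|f_\mathcal{A}\|_{L^2}^2\geq\delta\cdot\P(\mathcal{A})$, which I must verify is preserved), giving $O(\delta^{-1})$ steps and hence $\Lambda\geq 2^{-4\cdot O(\delta^{-1})}\cdot(\text{uniform bound})$; then the uniform bound $\Lambda(\mathcal{A})\geq\tfrac12\|f_\mathcal{A}\|_{L^2}^2\P(\mathcal{A})\geq\tfrac12\delta^3\sigma^2$-ish contributes the $\log$ terms once one accounts for the index blow-up $|H:H''|=2^{O(\delta^{-1})}$ — and matching constants to get precisely $\exp(-O(\delta^{-1}\log\sigma^{-1}))$ is the delicate part.

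So the main obstacle, as the hedging above signals, is the precise accounting of the iteration: one must choose the right potential function (I believe it is $\P(\mathcal{A})$ compared against the invariant $\|f_\mathcal{A}\|_{L^2}^2/\P(\mathcal{A})$) and verify that this quantity, which controls the per-step increment, does not collapse under the operations of Lemma \ref{lem.coreincrement2} — intuitively it cannot, because restricting fibres to a coset and passing to a subgroup can only increase average fibre density relative to the density, but this needs a genuine check. Granting that, the number of steps is $O(\delta^{-1})$ (or $O(\min(\delta^{-1},\log(\delta\sigma)^{-1}\cdot\text{something}))$), each step costs a factor $2^4$ in $\Lambda$ and a factor $2$ in index, the terminal uniform estimate is $\Lambda(\mathcal{A})\geq\tfrac12\|f_\mathcal{A}\|_{L^2(H)}^2\P_H(\mathcal{A})$, and unwinding the substitutions through the index growth yields $\Lambda(\mathcal{A})=\exp(-O(\delta^{-1}\log\sigma^{-1}))$. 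The secondary technical nuisance — already flagged by the author's own ``fiddly verification'' remark after Lemma \ref{lem.coreincrement} — is that the relation $\Lambda(\mathcal{A})\geq 2^{-4}\Lambda(\mathcal{A}')$ is multiplicative in the number of steps, so one accumulates $2^{-4k}$ with $k$ the step count; this is harmless since $k=O(\delta^{-1})$, but it must be carried through cleanly rather than swept aside.
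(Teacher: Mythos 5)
Your Fourier expansion and separation of the trivial mode are fine, but the proposal has two concrete problems. First, an algebra slip: with the error term bounded by $\sup_{\gamma\neq 0}|\wh{f_\mathcal{A}}(\gamma)|\cdot\P_H(\mathcal{A})$ and the main term equal to $\|f_\mathcal{A}\|_{L^2(H)}^2\,\P_H(\mathcal{A})$, the second horn of the dichotomy is $\sup_{\gamma\neq 0}|\wh{f_\mathcal{A}}(\gamma)|\geq\tfrac12\|f_\mathcal{A}\|_{L^2(H)}^2=\delta^2\sigma/2$, not $\delta/2$: the factor $\P_H(\mathcal{A})$ cancels rather than divides. So the increment available from Lemma \ref{lem.coreincrement2} is only $\delta^2\sigma/2$, i.e. a multiplicative gain $(1+\delta/2)$ in $\P_H(\mathcal{A})$, and even in the best case your iteration runs for $O(\delta^{-1}\log(\delta\sigma)^{-1})$ steps, giving $\exp(-O(\delta^{-1}\log\delta^{-1}+\delta^{-1}\log\sigma^{-1}))$, which is weaker than the stated bound when $\log\delta^{-1}\gg\log\sigma^{-1}$. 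Second, and this is the gap you yourself flag: after one application of Lemma \ref{lem.coreincrement2} the new family has no reason to satisfy an invariant of the type $\|f_{\mathcal{A}'}\|_{L^2}^2\geq\delta\,\P_{H'}(\mathcal{A}')$ --- that lemma only restricts fibres to cosets and selects a good coset of indices, and the fibre densities can become arbitrarily uneven --- so the ``effective $\delta$'' controlling the next increment is unprotected and the iteration cannot be closed as written.

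The paper closes exactly this gap by proving a bespoke driver (Lemma \ref{lem.itlem1}) rather than invoking Lemma \ref{lem.coreincrement2}, and two features of it are missing from your plan. (i) The Fourier analysis is run against $1_S$, the indicator of the support, not against $f_\mathcal{A}$: since $\E_{h\in H}{\sum_{\gamma\neq 0_{\wh{H}}}{|\wh{1_{A_h}}(\gamma)|^2}}\leq\delta\sigma$, the non-uniform case yields $\sup_{\gamma\neq 0_{\wh{H}}}|\wh{1_S}(\gamma)|\geq\delta\sigma/2$, i.e. a density increment for $S$ itself, $\sigma\mapsto\sigma(1+\delta/2)$, with $\delta$ untouched. (ii) The new family is rebuilt so that the special structure is preserved exactly: each surviving fibre, restricted to its best coset of the index-two subgroup, has density at least $\delta$, and one trims it to a subset of density exactly $\delta$, so that $f_{\mathcal{A}'}=\delta 1_{S'}$ with the same $\delta$. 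Termination is then forced by $\sigma_i\leq 1$ after at most $2\delta^{-1}\log\sigma^{-1}$ steps, and the terminal uniform bound $\delta^3\sigma_i^2/2$ together with the accumulated $2^{-4}$ losses gives $\exp(-O(\delta^{-1}\log\sigma^{-1}))$. Without (ii), or a proven substitute invariant, your iteration does not go through; without (i), even granting the invariant, the step count and hence the exponent are too large.
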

Naturally the proof is iterative with the following lemma acting as the driver.
\begin{lemma}\label{lem.itlem1}
Suppose that $H$ is a finite abelian group of exponent $2$, $\mathcal{A}=(A_h)_{h \in H}$ is a family on $H$ and $f_\mathcal{A} = \delta1_S$ for some $\delta \in (0,1]$ and $S \subset H$ of density $\sigma$. Then either
\begin{equation*}
\Lambda(\mathcal{A}) \geq \delta^3\sigma^2/2,
\end{equation*}
or there is a subgroup $H' \leq H$ of index $2$, a family $\mathcal{A}'$ on $H'$ and set $S' \subset H'$ such that $f_{\mathcal{A}'}= \delta 1_{S'}$ and
\begin{equation*}
\P_{H'}(S') \geq \sigma(1+\delta/2) \textrm{ and } \Lambda(\mathcal{A}) \geq 2^{-4}\Lambda(\mathcal{A}').
\end{equation*}
\end{lemma}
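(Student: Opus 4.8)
The plan is to carry out a single Roth--Meshulam density increment, adapted to the family setting, driven by a Fourier expansion of $\Lambda(\mathcal{A})$ exactly as in the sketch of \S\ref{sec.out}. First I would expand each inner product by Plancherel's theorem. Since $H$ has exponent $2$ every character is real, $\wh{1_{A_h}}(\gamma)^2=|\wh{1_{A_h}}(\gamma)|^2$, and $\wh{\tau_h g}(\gamma)=\gamma(h)\wh{g}(\gamma)$; also, for $h\notin S$ the fibre $A_h$ is empty and the inner product vanishes, while for $h\in S$ we have $\wh{1_{A_h}}(0_{\wh{H}})=\delta$. Using $\wh{f_\mathcal{A}}=\delta\wh{1_S}$ and $\wh{1_S}(0_{\wh{H}})=\sigma$ and isolating the trivial character, for $h\in S$
\begin{equation*}
\langle \tau_h(1_{A_h}\ast 1_{A_h}), f_\mathcal{A}\rangle_{L^2(H)} = \delta\sum_{\gamma\in\wh{H}}\gamma(h)\wh{1_{A_h}}(\gamma)^2\wh{1_S}(\gamma) = \delta^3\sigma + \delta\sum_{\gamma\neq 0_{\wh{H}}}\gamma(h)\wh{1_{A_h}}(\gamma)^2\wh{1_S}(\gamma).
\end{equation*}
Averaging over $h\in H$ (which contributes a factor $\P_H(S)=\sigma$) then gives
\begin{equation*}
\Lambda(\mathcal{A}) = \delta^3\sigma^2 + \delta\sigma\sum_{\gamma\neq 0_{\wh{H}}}\wh{1_S}(\gamma)\,\E_{h\in S}[\gamma(h)\wh{1_{A_h}}(\gamma)^2].
\end{equation*}

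If the error term is at least $-\delta^2\sigma/2$ then $\Lambda(\mathcal{A})\geq \delta^3\sigma^2/2$ and we are in the first case. Otherwise its absolute value exceeds $\delta^2\sigma/2$; bounding $|\gamma(h)|=1$, using $\wh{1_{A_h}}(\gamma)^2\geq 0$, and writing $M:=\sup_{\gamma\neq 0_{\wh{H}}}|\wh{1_S}(\gamma)|$,
\begin{equation*}
\frac{\delta^2\sigma}{2} < \sum_{\gamma\neq 0_{\wh{H}}}|\wh{1_S}(\gamma)|\,\E_{h\in S}[\wh{1_{A_h}}(\gamma)^2] \leq M\,\E_{h\in S}\sum_{\gamma\neq 0_{\wh{H}}}\wh{1_{A_h}}(\gamma)^2 \leq M\delta,
\end{equation*}
the last step being Parseval's theorem applied to each fibre: $\sum_{\gamma}\wh{1_{A_h}}(\gamma)^2=\|1_{A_h}\|_{L^2(H)}^2=\delta$. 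Hence there is a non-trivial character $\gamma_0$ with $|\wh{1_S}(\gamma_0)|>\delta\sigma/2$. Applying Lemma~\ref{lem.trivialdensityincrement} to $1_S$ yields the subgroup $H':=\{\gamma_0\}^\perp$ of index $2$ and a coset $c+H'$ with $|S\cap(c+H')|/|H'| = \sigma + |\wh{1_S}(\gamma_0)| > \sigma(1+\delta/2)$; I set $S':=(S\cap(c+H'))-c\subset H'$, which has $\P_{H'}(S') > \sigma(1+\delta/2)$.

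It remains to construct a family $\mathcal{A}'$ on $H'$ with $f_{\mathcal{A}'}=\delta1_{S'}$ and $\Lambda(\mathcal{A})\geq 2^{-4}\Lambda(\mathcal{A}')$; this goes exactly along the lines of Lemmas~\ref{lem.coreincrement} and~\ref{lem.coreincrement2}. For $h'\in S'$ the fibre $A_{c+h'}$ has density $\delta$ in $H$, so on at least one coset of $H'$ the translated restriction of $A_{c+h'}$ into $H'$ has density $\geq\delta$; I pick a representative $x_{c+h'}\in H$ of such a coset and let $A'_{h'}$ be any density-$\delta$ subset of $(A_{c+h'}\cap(x_{c+h'}+H'))-x_{c+h'}$. (Trimming down to density exactly $\delta$ only shrinks fibres, which can only help the $\Lambda$-inequality below; the minor integrality point is harmless in the regime of interest.) For $h'\notin S'$ the fibre $A_{c+h'}$ is empty, so I set $A'_{h'}:=\emptyset$; thus $f_{\mathcal{A}'}=\delta1_{S'}$ with the required density. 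Finally, each quadruple $(a_0',a_1',y',h')$ counted by $|H'|^4\Lambda(\mathcal{A}')$ is sent to
\begin{equation*}
(a_0,a_1,y,h):=(a_0'+x_{c+h'},\,a_1'+x_{c+h'},\,y'+x_{c+(a_0'+a_1'-h')},\,c+h');
\end{equation*}
using $2.x=0_H$ throughout and the identity $a_0+a_1-h = c+(a_0'+a_1'-h')$ one verifies, as in Lemma~\ref{lem.coreincrement}, that $a_0,a_1\in A_h$, that $y\in A_{a_0+a_1-h}$, and that the map is injective, whence $|H'|^4\Lambda(\mathcal{A}')\leq|H|^4\Lambda(\mathcal{A})=2^4|H'|^4\Lambda(\mathcal{A})$. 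The Fourier input here is routine (it is essentially the computation sketched in \S\ref{sec.out}); the main obstacle is the bookkeeping in the last step, namely keeping the coset shifts $x_{c+h'}$ and $x_{c+(a_0'+a_1'-h')}$ consistent across the two relevant indices so that the injection genuinely lands inside the quadruples counted by $\Lambda(\mathcal{A})$.
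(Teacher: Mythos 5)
Your proposal is correct and follows essentially the same route as the paper's own proof: the Plancherel expansion with the trivial mode isolated, the Parseval bound $\E_{h}\sum_{\gamma}\wh{1_{A_h}}(\gamma)^2\leq\delta$ forcing $\sup_{\gamma\neq 0_{\wh{H}}}|\wh{1_S}(\gamma)|\geq\delta\sigma/2$, the application of Lemma \ref{lem.trivialdensityincrement} to $1_S$ to produce $H'$ and $S'$, the trimmed density-$\delta$ fibres giving $f_{\mathcal{A}'}=\delta 1_{S'}$, and the same quadruple-injection yielding the $2^{-4}$ loss. The only differences (averaging over $S$ rather than $H$, choosing any good coset rather than the maximising one, and explicitly flagging the integrality of $\delta|H'|$, which the paper also glosses over) are cosmetic.
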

\begin{proof}
Since $f_\mathcal{A}=\delta 1_S$ we have that
\begin{equation*}
\Lambda(\mathcal{A}) = \delta \E_{h \in H}{\langle \tau_h(1_{A_h} \ast 1_{A_h}),1_S \rangle_{L^2(H)}}.
\end{equation*}
Applying Plancherel's theorem to the inner products we get that
\begin{equation*}
\Lambda(\mathcal{A}) = \delta \E_{h \in H}{\sum_{\gamma \in \wh{H}}{|\wh{1_{A_h}}(\gamma)|^2\wh{1_S}(\gamma)\gamma(h)}}.
\end{equation*}
The triangle inequality may be used on these inner sums to separate out the trivial mode. Indeed, since $\wh{1_{A_h}}(0_{\wh{H}})=f_\mathcal{A}(h)$ and $\wh{1_S}(0_{\wh{H}})=\sigma$ we get -- after a little manipulation -- that
\begin{eqnarray*}
\E_{h \in H}{\sum_{\gamma \neq 0_{\wh{H}}}{|\wh{1_{A_h}}(\gamma)|^2|\wh{1_S}(\gamma)|}} & \geq & \E_{h \in H}{f_\mathcal{A}(h)^2}\sigma - \delta^{-1}\Lambda(\mathcal{A})\\ & = & \delta^2\sigma^2 - \delta^{-1}\Lambda(\mathcal{A}).
\end{eqnarray*}
Now, we are done unless $\Lambda(\mathcal{A}) \leq \delta^3\sigma^2/2$ (in fact, unless $\Lambda(\mathcal{A}) < \delta^3\sigma^2/2$ but we shall not use this), whence
\begin{equation*}
\E_{h \in H}{\sum_{\gamma \neq 0_{\wh{H}}}{|\wh{1_{A_h}}(\gamma)|^2|\wh{1_S}(\gamma)|}}\geq  \delta^2\sigma^2/2.
\end{equation*}
On the other hand
\begin{equation*}
\E_{h \in H}{\sum_{\gamma \neq 0_{\wh{H}}}{|\wh{1_{A_h}}(\gamma)|^2}} = \E_{h \in H}{(f_\mathcal{A}(h) - f_\mathcal{A}(h)^2)} =\delta(1-\delta)\sigma \leq \delta \sigma
\end{equation*}
by Parseval's theorem. Using this with the triangle inequality in the previous expression tells us that $S$ is linearly biased:
\begin{equation*}
\sup_{\gamma \neq 0_{\wh{H}}}{|\wh{1_S}(\gamma)|} \geq \delta\sigma/2.
\end{equation*}
Thus, by Lemma \ref{lem.trivialdensityincrement} there is a subgroup $H' \leq H$ of index $2$ such that
\begin{equation}\label{eqn.increment}
\|1_S \ast \P_{H'}\|_{L^\infty(H)} \geq \sigma(1+\delta/2).
\end{equation}
Let $h_1 \in H$ be such that $(1_S \ast \P_{H'})(h_1) = \|1_S \ast \P_{H'}\|_{L^\infty(H)}$ and define a family $\mathcal{A}':=(A'_{h'})_{h' \in H'}$ as follows. For each $h' \in H'$ let $x_{h'+h_1}$ be such that $(1_{A_{h'+h_1}} \ast \P_{H'})(x_{h'+h_1})$ is maximal. If $(1_{A_{h'+h_1}} \ast \P_{H'})(x_{h'+h_1})>0$ then 
\begin{equation*}
0 < (1_{A_{h'+h_1}} \ast \P_{H'})(x_{h'+h_1})/2 \leq f_{\mathcal{A}}(h'+h_1) =\delta 1_S(h'+h_1),
\end{equation*}
whence
\begin{equation*}
1_{A_{h'+h_1}} \ast \P_{H'}(x_{h'+h_1}) \geq \E_{h \in H}{1_{A_{h'+h_1}}(h)}=f_\mathcal{A}(h'+h_1)=\delta,\end{equation*}
and $A_{h'+h_1} \cap (x_{h'+h_1}+H') - x_{h'+h_1}$ contains a set of density $\delta$; let $A_{h'}'$ be such a set. If $(1_{A_{h'+h_1}} \ast \P_{H'})(x_{h'+h_1})=0$ then let $A_{h'}'=\emptyset$. Finally, we write $S':=S \cap (h_1+H')-h_1$ and it remains to check that we have the required properties.

First, note that 
\begin{equation*}
f_{\mathcal{A}'}(h') =\P_{H'}(A_{h'}') \leq 2\P_{H}(A_{h'+h_1})=2f_{\mathcal{A}}(h'+h_1),
\end{equation*}
thus if $f_{\mathcal{A}'}(h')>0$ then $h'+h_1 \in S$ and so $h' \in S'$. Similarly,
\begin{equation*}
f_{\mathcal{A}'}(h') =\P_{H'}(A_{h'}') \geq \P_{H}(A_{h'+h_1})=2f_{\mathcal{A}}(h'+h_1),
\end{equation*}
so if $f_{\mathcal{A}'}(h')=0$ then $h'+h_1 \not \in S$, whence $h' \not \in S'$. By design, $f_{\mathcal{A}'}$ takes only the values $0$ and $\delta$ and so we have the representation $f_{\mathcal{A}'}=\delta 1_{S'}$.

Secondly, we have that $\P_{H'}(S') = 1_S \ast \P_{H'}(h_1)$, whence $\P_{H'}(S') \geq \sigma(1+\delta/2)$ by (\ref{eqn.increment}). Lastly, we check that  $\Lambda(\mathcal{A}) \geq 2^{-4}\Lambda(\mathcal{A}')$ in the usual fashion.

There are $|H'|^4\Lambda(\mathcal{A}')$ quadruples $(a_0',a_1',y',h')$ with $a_0', a_1' \in A_{h'}'$ and $y' \in A_{a_0'+a_1'-h'}'$. Every such quadruple corresponds uniquely to a quadruple
\begin{equation*}
(a_0,a_1,y,h):=(a_0'+x_{h_1+h'},a_1'+x_{h_1+h'},y'+x_{a_0'+a_1'-h'+h_1},h_1+h')
\end{equation*}
with $a_0,a_1 \in A_h$ and $y \in A_{a_0+a_1-h}$, whence $|H'|^4\Lambda(\mathcal{A}) \leq |H|^4\Lambda(\mathcal{A})$ and the result follows on noting that $|H|^4 = 2^4 |H'|^4$.
\end{proof}

\begin{proof}[Proof of Proposition \ref{prop.largeL}]
Let $H_0:=H$, $\mathcal{A}_0:=\mathcal{A}$, $\alpha_0:=\delta \sigma$, $S_0:=S$ and $\sigma_0:=\sigma$. Suppose that we have a finite abelian group $H_i$ of exponent $2$ with a family $\mathcal{A}_i$ on $H_i$ of density $\alpha_i$ and a set $S_i$ of density $\sigma_i$ such that $f_{\mathcal{A}_i}=\delta1_{S_i}$. Apply Lemma \ref{lem.itlem1} to see that either
\begin{equation*}
\Lambda(\mathcal{A}_i) \geq  \delta^3\sigma_i^2/2,
\end{equation*}
or there is a subgroup $H_{i+1}$ of index $2$ in $H_i$, a family $\mathcal{A}_{i+1}$ and a set $S_{i+1}$ such that
\begin{equation*}
f_{\mathcal{A}_{i+1}} = \delta1_{S_{i+1}}, \sigma_{i+1} \geq \sigma_i(1+ \delta/2) \textrm{ and } \Lambda(\mathcal{A}_i) \geq 2^{-4} \Lambda(\mathcal{A}_{i+1}).
\end{equation*}
Since $\sigma_i \leq 1$ we see that this iteration must terminate at some stage $i$ with $(1+\delta/2)^i \leq \sigma^{-1}$ i.e. with $i \leq 2\delta^{-1}\log \sigma^{-1}$. It follows that
\begin{equation*}
\Lambda(\mathcal{A}) \geq 2^{-8\delta^{-1}\log \sigma^{-1}}\delta^3\sigma^2/2,
\end{equation*}
which is the result
\end{proof}
Proposition \ref{prop.largeL} will be used again as is in \S\ref{sec.highfibre} but it may seem like the rather special form of the family considered is too restrictive. However, a standard dyadic decomposition lets us apply this proposition to an arbitrary family; we gain precisely in the case when $\|f_\mathcal{A}\|_{L^2(H)}^2\alpha^{-2} \rightarrow \infty$.
\begin{corollary}\label{cor.kuy}
Suppose that $H$ is a finite abelian group of exponent $2$, $\mathcal{A}=(A_h)_{h \in H}$ is a family on $H$ of density $\alpha$ and $\|f_\mathcal{A}\|_{L^2(H)}=K\alpha^2$ for some $K \geq 2$. Then\begin{equation*}
\Lambda(\mathcal{A}) = \exp(-O(\alpha^{-1} K^{-1}\log^2 K)).
\end{equation*}
\end{corollary}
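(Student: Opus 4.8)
The plan is to reduce to Proposition~\ref{prop.largeL} by a dyadic decomposition of the density function, the hypothesis entering only to locate a level of the decomposition on which the fibres are simultaneously dense and numerous. Write $K := \|f_\mathcal{A}\|_{L^2(H)}^{2}\alpha^{-2}$, so that $K\geq 1$, and note $K\alpha \leq 1$ since $\|f_\mathcal{A}\|_{L^2(H)}^{2} \leq \|f_\mathcal{A}\|_{L^1(H)} = \alpha$; in particular $\alpha^{-1}K^{-1}\log^{2}K \geq \log^{2}K \geq \log^{2}2$. For each integer $j \geq 0$ put $T_j := \{h \in H : f_\mathcal{A}(h) > 2^{-j-1}\}$ and $\sigma_j := \P_H(T_j)$. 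For $h \in T_j$ one has $|A_h| > 2^{-j-1}|H|$, so we may pick $B_h \subseteq A_h$ of density exactly $2^{-j-1}$; setting $B_h := \emptyset$ for $h \notin T_j$, the family $\mathcal{A}^{(j)} := (B_h)_{h \in H}$ satisfies $f_{\mathcal{A}^{(j)}} = 2^{-j-1}1_{T_j}$, and since $B_h \subseteq A_h$ for every $h$ each quadruple counted by $\Lambda(\mathcal{A}^{(j)})$ is also counted by $\Lambda(\mathcal{A})$, whence $\Lambda(\mathcal{A}) \geq \Lambda(\mathcal{A}^{(j)})$. Feeding $\mathcal{A}^{(j)}$ into Proposition~\ref{prop.largeL} (and reading off the lower-order factors appearing in its proof) gives $\Lambda(\mathcal{A}) \geq \Lambda(\mathcal{A}^{(j)}) \geq \exp(-O(2^{j}\log\sigma_j^{-1} + j))$ for every $j \geq 0$, so it suffices to exhibit a single $j$ with $2^{j}\log\sigma_j^{-1} + j = O(\alpha^{-1}K^{-1}\log^{2}K)$.

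I would then record three estimates for the $\sigma_j$. First, Markov's inequality gives $\sigma_j \leq 2^{j+1}\alpha$. Secondly, from $\E_{h \in H}{f_\mathcal{A}(h)} = \alpha$ and $f_\mathcal{A} \leq 1$ one gets $\sigma_j \geq \alpha - 2^{-j-1}$, which a second-moment (Paley--Zygmund) argument upgrades to $\sigma_j \geq (1 - 2^{-j-1}\alpha^{-1})^{2}K^{-1}$ whenever $2^{-j-1} < \alpha$. Thirdly, and this is the only place the hypothesis is used, estimating $f_\mathcal{A}^{2} \leq 2^{-j-1}f_\mathcal{A}$ off $T_j$ and $f_\mathcal{A}^{2} \leq 1$ on $T_j$ yields $K\alpha^{2} = \|f_\mathcal{A}\|_{L^2(H)}^{2} \leq 2^{-j-1}\alpha + \sigma_j$, i.e.\ $\sigma_j \geq \alpha(K\alpha - 2^{-j-1})$.

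The principal choice is $j^{*} := \lceil\log_{2}(2/(K\alpha))\rceil$: then $2^{j^{*}}$ is of order $(K\alpha)^{-1}$ and $2^{-j^{*}-1} \leq K\alpha/4$, so the third estimate gives $\sigma_{j^{*}} \geq \frac{3}{4}K\alpha^{2}$ and hence $2^{j^{*}}\log\sigma_{j^{*}}^{-1} + j^{*} = O\bigl((K\alpha)^{-1}\log(1/(K\alpha^{2}))\bigr)$; since $\log(1/(K\alpha^{2})) \leq 2\log\alpha^{-1} = O(\log^{2}K)$ as soon as $\log K \gtrsim \sqrt{\log\alpha^{-1}}$, this settles the corollary for all such $K$. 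The main obstacle is the complementary range $2 \leq K \lesssim \exp(\sqrt{\log\alpha^{-1}})$, where $j^{*}$ is wasteful by a factor of order $\log\alpha^{-1}/\log^{2}K$. For these $K$ one must move to a larger level: taking $j$ near $\log_{2}\alpha^{-1}$ makes $\sigma_j \gtrsim K^{-1}$ by the Paley--Zygmund estimate (so $\log\sigma_j^{-1}=O(\log K)$), while passing to the level at which $\sigma_j$ first reaches a constant multiple of $\P_H(\{f_\mathcal{A}>0\})$ makes $\log\sigma_j^{-1}=O(1)$; and for bounded $K$ one may simply fall back on the trivial bound $\Lambda(\mathcal{A}) = \exp(-O(\alpha^{-1}))$, which holds for every family of density $\alpha$ by Theorem~\ref{thm.weak} together with the family--set correspondence of \S\ref{sec.families}, and is of the right order there since $\alpha^{-1}K^{-1}\log^{2}K$ is then of order $\alpha^{-1}$. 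Reconciling these level choices so that the exponent emerges as $\alpha^{-1}K^{-1}\log^{2}K$ — with $\log^{2}K$ rather than $\log\alpha^{-1}$ — uniformly over the whole range $K \geq 2$ is the delicate bookkeeping at the heart of the argument, and is where the ``considerable further expense'' of the introduction is incurred.
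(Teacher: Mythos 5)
Your reduction to Proposition~\ref{prop.largeL} via superlevel sets is sound as far as it goes, and your ``principal choice'' $j^*$ with threshold $\approx K\alpha$ does handle the regime $\log K \gtrsim \sqrt{\log\alpha^{-1}}$, but the proof has a genuine gap exactly where the corollary has content: the range where $K\to\infty$ but $\log K = o(\sqrt{\log \alpha^{-1}})$. There your own accounting shows the level $j^*$ only yields $\exp(-O((K\alpha)^{-1}\log\alpha^{-1}))$, and none of the proposed rescues closes the deficit. Taking $j$ near $\log_2\alpha^{-1}$ with the Paley--Zygmund bound $\sigma_j\gtrsim K^{-1}$ gives $2^j\log\sigma_j^{-1}\approx \alpha^{-1}\log K$, which exceeds the target $\alpha^{-1}K^{-1}\log^2K$ by a factor $K/\log K$; e.g.\ for $K=\log\alpha^{-1}$ the target is $\alpha^{-1}(\log\log\alpha^{-1})^2/\log\alpha^{-1}$, while your two explicit choices give $\approx\alpha^{-1}$ and $\approx\alpha^{-1}\log\log\alpha^{-1}$ respectively. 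The suggestion of ``the level at which $\sigma_j$ first reaches a constant multiple of $\P_H(\{f_\mathcal{A}>0\})$'' does not give $\log\sigma_j^{-1}=O(1)$ unless the support itself has constant density, and that level may sit at enormous $2^j$; and the trivial bound only covers $K=O(1)$. So the whole intermediate range is deferred to ``delicate bookkeeping'' that is never carried out --- and it cannot be carried out from the three estimates you recorded, because at threshold $\approx K\alpha$ they only guarantee $\sigma\gtrsim K\alpha^2$, i.e.\ $\log\sigma^{-1}\approx\log\alpha^{-1}$ rather than the needed $O(\log K)$.

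The missing idea is a weighted pigeonhole over \emph{all} dyadic levels which ties the measure of the selected level to its height. Decompose into shells $S_i=\{2^{-(i+1)}\leq f_\mathcal{A}\leq 2^{-i}\}$, discard the levels below $\alpha/2$ so that $\sum_{i\leq\lceil\log_2\alpha^{-1}\rceil}2^{-2i}\P_H(S_i)\geq \tfrac34 K\alpha^2$, and compare this sum against the weights $2^{\epsilon i}$ with $\epsilon=1/(1+\log K)$, whose total is $O(\epsilon^{-1}\alpha^{-\epsilon})$. Averaging then produces a single level $i$ satisfying \emph{simultaneously} $\delta:=2^{-(i+1)}=\Omega(K\alpha/\log K)$ (using $2^{-(i+1)}\P_H(S_i)\leq\alpha$) and $\sigma:=\P_H(S_i)\gtrsim \epsilon K(\alpha/\delta)^{2+\epsilon}$, so that $\log\sigma^{-1}=O(\log(K\delta\alpha^{-1}))$ --- the point being that the lower bound on $\sigma$ scales like $K(\alpha/\delta)^2$ up to logarithmic losses, not like $K\alpha^2$. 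Feeding this level into Proposition~\ref{prop.largeL} as you do gives $\Lambda(\mathcal{A})\geq\exp(-O(\delta^{-1}\log(K\delta\alpha^{-1})))$, and since $t\mapsto t^{-1}\log(Kt\alpha^{-1})$ is decreasing for $t$ in the relevant range, the worst case is $\delta\asymp K\alpha/\log K$, which yields $\exp(-O(\alpha^{-1}K^{-1}\log^2K))$ uniformly in $K\geq 2$. This weighted selection (with the specific choice $\epsilon=1/(1+\log K)$) is precisely the bookkeeping your write-up postpones, and without it the $\log^2K$ in the exponent degenerates to $\log\alpha^{-1}$ in the main range.
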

\begin{proof}
Let $S_i:=\{h \in H: 2^{-(i+1)} \leq f_\mathcal{A}(h) \leq 2^{-i}\}$ and $S':=\{h \in H: f_\mathcal{A}(h) \leq \alpha/2\}$. We may use these sets to partition the range of summation in $\|f_\mathcal{A}\|_{L^2(H)}^2$: by the triangle inequality
\begin{equation*}
\sum_{i \leq \lceil \log_2 \alpha^{-1} \rceil}{2^{-2i}\P_H(S_i)} + (\alpha/2)^2 \geq \|f_\mathcal{A}\|_{L^2(H)}^2.
\end{equation*}
The Cauchy-Schwarz inequality tells us that $\|f_{\mathcal{A}}\|_{L^2(H)}^2 \geq \alpha^2$, whence 
\begin{equation}\label{eqn.lk}
\sum_{i \leq \lceil \log_2 \alpha^{-1} \rceil}{2^{-2i}\P_H(S_i)}  \geq 3\|f_\mathcal{A}\|_{L^2(H)}^2/4.
\end{equation}
Now let $\epsilon \in (0,1]$ be a parameter to be chosen later and note that
\begin{eqnarray*}
\sum_{i \leq \lceil \log_2 \alpha^{-1} \rceil}{2^{\epsilon i}} & \leq & 2\alpha^{-\epsilon} \sum_{ i \leq \lceil \log_2 \alpha^{-1} \rceil}{2^{\epsilon(i-\lceil \log_2 \alpha^{-1} \rceil)}}\\ & \leq & 2\alpha^{-\epsilon} \sum_{j=0}^\infty{2^{-\epsilon j}}\\ & = & 2\alpha^\epsilon/(1-2^{-\epsilon}) \leq 2\epsilon^{-1}\alpha^{-\epsilon}.
\end{eqnarray*}
Returning to (\ref{eqn.lk}) we see that
\begin{equation*}
\sum_{i \leq \lceil \log_2 \alpha^{-1} \rceil}{2^{\epsilon i}.2^{-(2+\epsilon)i}\P_H(S_i)}  \geq 3\|f_\mathcal{A}\|_{L^2(H)}^2/4,
\end{equation*}
and so by averaging from our previous calculation there is some $i \leq \lceil \log_2 \alpha^{-1} \rceil$ such that
\begin{equation}\label{eqn.j}
(2\epsilon^{-1}\alpha^{-\epsilon}). 2^{-(2+\epsilon)i}\P_H(S_i) \geq 3\|f_\mathcal{A}\|_{L^2(H)}^2/4.
\end{equation}
Moreover $2^{-(i+1)}\P_H(S_i) \leq \E_{h \in H}{f_\mathcal{A}(h)}=\alpha$; so, recalling that $\|f_\mathcal{A}\|_{L^2(H)}^2=K\alpha^2$ we have
\begin{equation*}
2^{-(1+\epsilon)i} \geq 3 \epsilon K\alpha^{1+\epsilon}/16.
\end{equation*}
If we take $\epsilon = 1/(1+\log K)$ then we get that
\begin{equation}\label{eqn.dyadic}
2^{-(i+1)} = \Omega(\alpha K/\log K).
\end{equation}
Let $\mathcal{A}'$ be a family defined as follows. If $h \in S_i$ then $A'_{h}$ is a subset of $A_h$ of density $2^{-(i+1)}$ and $A'_h$ is empty otherwise. By comparison of the terms in $\Lambda(\mathcal{A})$ with those in $\Lambda(\mathcal{A}')$ we see that $\Lambda(\mathcal{A}) \geq \Lambda(\mathcal{A}')$.

We now apply Proposition \ref{prop.largeL} to $\mathcal{A}'$; it is easy to see from (\ref{eqn.j}) and (\ref{eqn.dyadic}) that 
\begin{equation*}
\delta^{-1}=2^{(i+1)} = O( \alpha^{-1}K^{-1}\log K)
\end{equation*}
and
\begin{equation*}
\log \sigma^{-1} =\log \P_H(S_i)^{-1} = O(\log((\delta \alpha^{-1})^{2+\epsilon}K^{-1}\log K)) = O(\log K\delta \alpha^{-1}).
\end{equation*}
The result follows on noting that
\begin{equation*}
\Lambda(\mathcal{A}) = \exp(-O(\delta^{-1}\log K\delta\alpha^{-1}))
\end{equation*}
increases as $\delta$ decreases.
\end{proof}
It should be noted that one cannot completely remove the logarithmic term in this corollary. We might have $K \sim \alpha^{-1}$, but $\Lambda(\mathcal{A})$ may still be $\exp(-\Omega(\log K))$. To see this consider, for example, the family $\mathcal{A}$ where every fibre $A_h$ is a random set of density $\alpha$. Of course, the logarithmic power will not significantly affect our final result and is only critical when $K$ is much smaller than $\alpha^{-1}$, in which case it may be possible to remove it entirely.

\section{A quasi-random Balog-Szemer{\'e}di-Gowers-Fre{\u\i}man theorem}\label{sec.d}

The Balog-Szemer{\'e}di-Gowers-Fre{\u\i}man theorem is a now ubiquitous result in additive combinatorics   introduced by Gowers in \cite{WTG}. It combines (a refined proof of) the Balog-Szemer{\'e}di theorem \cite{ABES} with the structure theorem of Fre{\u\i}man \cite{GAF} concerning sets with small sumset. Since we are working in finite abelian groups of exponent $2$ we actually require the far easier torsion version of Fre{\u\i}man's theorem due to Ruzsa \cite{IZRArb}. In fact, in this setting a version of the Balog-Szemer{\'e}di-Gowers-Fre{\u\i}man theorem is known with relatively good bounds.
\begin{theorem}[{\cite[Theorem 1.7]{BJGTCTF}}]\label{thm.bsg}
Suppose that $H$ is a group of exponent $2$, $A \subset H$ has density $\alpha$ and $\|1_A \ast 1_A \|_{L^2(H)}^2 \geq c\alpha^3$. Then there is an element $x\in H$ and a subgroup $H' \leq H$ such that
\begin{equation*}
\P_H(H') = \exp(-O(c^{-1} \log c^{-1}))\alpha \textrm{ and } (1_A \ast \P_{H'})(x) \geq c/2.
\end{equation*}
\end{theorem}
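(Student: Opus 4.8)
The plan is to read the hypothesis as an almost-extremal additive-energy bound and then run the Balog--Szemer\'edi--Gowers and Fre{\u\i}man machinery, using throughout that $H$ has exponent $2$ so that $A+A=A-A$ and Ruzsa's calculus is especially clean.

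The first step is to unpack the hypothesis. Since $\widehat{1_A\ast 1_A}=\widehat{1_A}^2$, Parseval gives $\|1_A\ast 1_A\|_{L^2(H)}^2=\sum_{\gamma\in\widehat H}|\widehat{1_A}(\gamma)|^4$, and on applying the inversion formula this equals $|H|^{-3}$ times the additive energy $E(A):=\#\{(a_1,a_2,a_3,a_4)\in A^4:a_1+a_2=a_3+a_4\}$. So the hypothesis is exactly $E(A)\geq c|A|^3$, i.e. $A$ has additive energy within a factor $c$ of the Cauchy--Schwarz maximum $|A|^3$. Two trivial reductions: if $|A|\leq\exp(O(c^{-1}\log c^{-1}))$ we may take $H'=\{0_H\}$ and any $x\in A$; and since the diagonal contributes only $|A|^2$ to $E(A)$, once $|A|>2/c$ we retain $\geq(c/2)|A|^3$ genuinely non-trivial additive quadruples, which is the regime in which structure must appear.

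Next is the Balog--Szemer\'edi--Gowers step (the graph-theoretic argument of \cite{ABES} as refined by Gowers \cite{WTG}): from $E(A)\geq c|A|^3$ one extracts a subset $A'\subseteq A$ with $|A'|\gg c|A|$ and small doubling $|A'+A'|\leq K|A'|$, $K=O(c^{-1})$. It is worth running this in the sharpest available form --- doubling \emph{linear} in $c^{-1}$ --- since a crude polynomial bound here would leak an extra logarithm later. One then applies the torsion Fre{\u\i}man theorem of Ruzsa \cite{IZRArb} in its exponent-$2$ form (via the Ruzsa covering lemma, with the sharpening that replaces Ruzsa's original $\exp(O(K^2))$ by $\exp(O(K\log K))$): such an $A'$ lies in a coset $x_0+\widetilde H$ of a subgroup with $|\widetilde H|\leq\exp(O(K\log K))|A'|=\exp(O(c^{-1}\log c^{-1}))|A|$, so $A$ already has density $\geq|A'|/|\widetilde H|=\exp(-O(c^{-1}\log c^{-1}))$ on that coset. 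To finish one passes to a suitable subgroup $H'\leq\widetilde H$ and a coset of it on which $A$ is dense; I would recover the asserted density $c/2$ by also tracking the popular-sums set $P:=\{t\in H:|A\cap(t+A)|\geq(c/2)|A|\}$ --- which in exponent $2$ lies inside $A+A$ and satisfies $(c/2)|A|\leq|P|\leq(2/c)|A|$ directly from $E(A)\geq c|A|^3$, and whose threshold is exactly the $c/2$ appearing in the conclusion --- through the extraction, so that $A'$ may be arranged to fill a $c/2$-proportion of its coset, the residual $\exp(O(c^{-1}\log c^{-1}))$ loss in $|H'|$ being absorbed into the stated bound.

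The main obstacle is quantitative bookkeeping rather than any new idea: one needs the Balog--Szemer\'edi--Gowers output with doubling $O(c^{-1})$ (not merely $c^{-O(1)}$) and retained density $\gg c\alpha$, and the torsion Fre{\u\i}man theorem with the $\exp(O(K\log K))$ bound rather than $\exp(O(K^2))$; and the genuinely delicate point --- the reason one quotes the careful treatment in \cite{BJGTCTF} rather than a black-box combination --- is forcing the final coset density up to exactly $c/2$, as opposed to some fixed power $c^{O(1)}$, while keeping $H'$ at the claimed scale, which requires carrying the popular-sumset information coherently through both steps and is where the exponent-$2$ symmetry is genuinely used.
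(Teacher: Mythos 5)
There is nothing to compare against inside the paper: Theorem \ref{thm.bsg} is quoted verbatim from Green and Tao \cite{BJGTCTF} and Sanders gives no proof of it, only the de-coupled Corollary \ref{cor.bsgcor} built on top of it. Judged on its own, your sketch has a genuine gap at exactly the point you yourself flag as delicate. Composing a graph-theoretic Balog--Szemer\'edi--Gowers extraction with the torsion Fre{\u\i}man theorem gives, as you compute, a coset of a subgroup $\widetilde H$ with $|\widetilde H|\leq\exp(O(c^{-1}\log c^{-1}))|A|$ on which $A$ has density only $\exp(-O(c^{-1}\log c^{-1}))$; the theorem demands density $c/2$, which is exponentially larger. ``Tracking the popular-sums set $P$ through the extraction'' is named but never carried out, and it is not bookkeeping: a pigeonhole/coset-splitting argument can only preserve density, not boost it, so climbing from $\exp(-O(c^{-1}\log c^{-1}))$ to $c/2$ without losing an uncontrolled index requires precisely the structural input that constitutes the content of the cited theorem. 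Note also that the polynomial-in-$c$ density is not a cosmetic refinement here: in Corollary \ref{cor.bsgcor} the value $c/2$ is fed into a further density-increment iteration with $\epsilon$ comparable to $\sqrt{c/K}$, and an exponentially small density at this stage would wreck the final bounds of the paper.

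A secondary but independent problem is your quantitative premise for the first step. The known graph-theoretic BSG statements give, from $E(A)\geq c|A|^3$, a subset $A'$ with $|A'|\gg c|A|$ and doubling $c^{-O(1)}$ where the exponent is strictly larger than $1$ (of order $3$--$4$ in the sharpest published forms); a version with doubling $O(c^{-1})$ \emph{and} $|A'|\gg c|A|$ is not available. With doubling $K=c^{-C}$, $C>1$, even the subgroup-size bound $\P_H(H')=\exp(-O(c^{-1}\log c^{-1}))\alpha$ fails, becoming $\exp(-O(c^{-C}\log c^{-1}))\alpha$. This is why the cited Green--Tao argument does not proceed by the BSG-then-Fre{\u\i}man composition at all but exploits the finite-field setting directly to obtain the coset density $c/2$ alongside the $\exp(-O(c^{-1}\log c^{-1}))$ subgroup size; if you want a self-contained proof you should reproduce an argument of that type rather than the route sketched.
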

We actually require a slightly modified version of this result which also ensures that $A'$ behaves uniformly on $H'$. This can essentially be read out of the proof of Green and Tao \cite{BJGTCTF}; however, for completeness, we include a `de-coupled' proof here.
\begin{corollary}\label{cor.bsgcor}
Suppose that $H$ is a group of exponent $2$, $A \subset H$ has density $\alpha$ and $\|1_A \ast 1_A \|_{L^2(H)}^2 \geq c\alpha^3$, and $\epsilon \in (0,1]$ is a parameter. Then there is an element $x \in H$ and a subgroup $H' \leq H$ such that
\begin{equation*}
\P_H(H') = \exp(-O((c^{-1} +\epsilon^{-1})\log c^{-1}))\alpha \textrm{ and } (1_A \ast \P_{H'})(x) \geq c/2,
\end{equation*}
and writing $A':=A \cap (x+H') -x \subset H'$ one has
\begin{equation*}
\sup_{\gamma \neq 0_{\wh{H'}}}{|\wh{1_{A'}}(\gamma)|} \leq \epsilon \P_{H'}(A').
\end{equation*}
\end{corollary}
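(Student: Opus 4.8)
The plan is to obtain the corollary by first applying Theorem~\ref{thm.bsg} to get a subgroup $H''\leq H$ and an element $x_0\in H$ on which $A$ is dense, and then to iterate an $\ell^\infty$-density increment inside $H''$ until the relevant restriction of $A$ becomes Fourier-uniform. Concretely, set $A'':=A\cap(x_0+H'')-x_0\subset H''$, so that $\P_{H''}(A'')=(1_A\ast\P_{H''})(x_0)\geq c/2$ by Theorem~\ref{thm.bsg}. Now run the following loop: given a subgroup $K\leq H''$ and a translate making $A$ restrict to a set $B\subset K$ of density $\beta\geq c/2$, either $\sup_{\gamma\neq 0_{\wh K}}|\wh{1_B}(\gamma)|\leq\epsilon\beta$ and we stop, or there is a non-trivial $\gamma$ with $|\wh{1_B}(\gamma)|>\epsilon\beta$, in which case Lemma~\ref{lem.trivialdensityincrement} hands us an index-$2$ subgroup $K'=\{\gamma\}^\perp$ and a coset on which $1_B\ast\P_{K'}$ exceeds $\beta(1+\epsilon)$; replace $B$ by the corresponding restriction, which lives in $K'$ with density at least $\beta(1+\epsilon)$. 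The density is bounded by $1$ and starts at $\geq c/2$, so the loop runs at most $O(\epsilon^{-1}\log c^{-1})$ times; at termination we have a subgroup $H'\leq H''\leq H$ with $\P_{H''}(H')\geq 2^{-O(\epsilon^{-1}\log c^{-1})}$ and an $x\in H$ such that $A':=A\cap(x+H')-x$ has density $\geq c/2$ and is $\epsilon$-uniform in the required sense.

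To finish, one combines the index bounds: Theorem~\ref{thm.bsg} gives $\P_H(H'')=\exp(-O(c^{-1}\log c^{-1}))\alpha$, and the incrementing phase multiplies the relative density of $H'$ in $H''$ by $2^{-O(\epsilon^{-1}\log c^{-1})}=\exp(-O(\epsilon^{-1}\log c^{-1}))$, so altogether $\P_H(H')=\exp(-O((c^{-1}+\epsilon^{-1})\log c^{-1}))\alpha$, which is exactly the claimed bound. The condition $(1_A\ast\P_{H'})(x)\geq c/2$ is just the statement that the final restricted density is at least $c/2$, which is preserved throughout since each step only increases it. The word ``de-coupled'' in the statement refers precisely to this separation: Theorem~\ref{thm.bsg} is applied as a black box to locate the dense piece, and then the uniformization is performed afterwards as an independent iteration, rather than threading the uniformity requirement through the internals of the Balog--Szemer\'edi--Gowers--Fre{\u\i}man argument.

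The main point requiring a little care — and the only genuine obstacle — is bookkeeping the densities correctly through the index-$2$ steps so that the exponent really comes out as $O((c^{-1}+\epsilon^{-1})\log c^{-1})$ rather than something worse. The number of increments is $O(\epsilon^{-1}\log(c^{-1}))$ because each multiplies $\beta$ by $(1+\epsilon)$ and $\beta$ cannot grow past $1$ from its initial value $\geq c/2$; each increment costs a factor of $2$ in index; hence $\exp(O(\epsilon^{-1}\log c^{-1}))$ for this phase, and adding the $\exp(O(c^{-1}\log c^{-1}))$ from Theorem~\ref{thm.bsg} gives the stated form. One should also note that since $H'\leq H''$ one has $\P_H(H')=\P_H(H'')\cdot\P_{H''}(H')$, so the two contributions genuinely multiply; and that $x$ is obtained by composing the translate $x_0$ from Theorem~\ref{thm.bsg} with the translates accumulated during the increment loop (using, as in Lemma~\ref{lem.coreincrement}, that $2.h=0_H$ for all $h$, so these translations interact harmlessly). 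Beyond this, everything is routine: the only inputs are Theorem~\ref{thm.bsg} and Lemma~\ref{lem.trivialdensityincrement}, applied exactly as in the density-increment arguments of \S\ref{sec.dense}.
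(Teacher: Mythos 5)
Your proposal is correct and follows essentially the same route as the paper: apply Theorem~\ref{thm.bsg} as a black box to locate a dense translate $A_0\subset H_0$ with density at least $c/2$, then run the standard $\ell^\infty$-increment iteration of Lemma~\ref{lem.trivialdensityincrement} until $\epsilon$-uniformity holds, terminating after $O(\epsilon^{-1}\log c^{-1})$ index-$2$ steps, and multiply the two index losses. The bookkeeping you flag (densities only increase from $c/2$, translates compose since the group has exponent $2$, $\P_H(H')=\P_H(H_0)\P_{H_0}(H')$) is exactly what the paper does.
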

\begin{proof}
We apply the previous theorem (Theorem \ref{thm.bsg}) to get an element $x_0 \in H$ and a subgroup $H_0 \leq H$ such that
\begin{equation*}
\P_H(H_0) = \exp(-O(c^{-1} \log c^{-1})) \textrm{ and } (1_A \ast \P_{H_0})(x_0) \geq c/2.
\end{equation*}
Put $A_0:=A \cap (x_0+H_0)-x_0 \subset H_0$ and $\alpha_0:=\P_{H_0}(A_0)$.  Now, suppose that we have been given an element $x_i \in H$, a subgroup $H_i$ and a subset $A_i$ of $H_i$ of density $\alpha_i$. If
\begin{equation}\label{eqn.r}
\sup_{\gamma \neq 0_{\wh{H_i}}}{|\wh{1_{A_i}}(\gamma)|} \leq \epsilon \alpha_i
\end{equation}
then we terminate the iteration; otherwise we apply Lemma \ref{lem.trivialdensityincrement} to get a subspace $H_{i+1}$ of index $2$ in $H_i$ such that
\begin{equation*}
\|1_{A_i} \ast \P_{H_{i+1}}\|_{L^\infty(H_i)} \geq \alpha_i(1+\epsilon).
\end{equation*}
Let $x_{i+1}$ be such that $(1_{A_i} \ast \P_{H_{i+1}})(x_{i+1})=\|1_{A_i} \ast \P_{H_{i+1}}\|_{L^\infty(H_i)}$, and $A_{i+1} = A_i \cap (x_{i+1} + H_{i+1}) - x_{i+1} \subset H_{i+1}$. 

Since $\alpha_i \leq 1$ we see that this iteration must terminate at some stage $i$ with $(1+\epsilon)^i \leq \alpha_0^{-1}$ i.e. with $i \leq \epsilon^{-1}\log \alpha_0^{-1}=O(\epsilon^{-1}\log c^{-1})$. We put $x:=x_0+\dots+x_i$ and $H':=H_i$ so that $H_i$ has index $O(\epsilon^{-1}\log c^{-1})$ in $H_0$ and $A'=A_i$ has density at least $c/2$. Thus
\begin{equation*}
\P_H(H') =\P_H(H_0).\P_{H_0}(H_k) =\exp(-O((c^{-1} +\epsilon^{-1})\log c^{-1}))\alpha,
\end{equation*}
and it remains to note that the final condition of the corollary holds in view of the fact that we must have (\ref{eqn.r}) for the iteration to terminate.
\end{proof}
The iteration in the above proof is essentially the iteration at the core of the usual Roth-Meshulam argument (c.f. \S\ref{sec.out}) and consequently if one could improve the $\epsilon$-dependence in the above result one could probably improve the Roth-Meshulam argument directly. Unfortunately in our use of this corollary $\epsilon$ and $c$ are comparable; thus, even in the presence of Marton's conjecture, more commonly called the Polynomial Fre{\u\i}man Ruzsa conjecture (see \cite{BJGFFM}), we would see no significant improvement in our final result.

\section{Families with high fibered energy}\label{sec.highfibre}

In this section we use our previous work to show that if a family $\mathcal{A}$ has large additive energy in its fibres then $\Lambda(\mathcal{A})$ is large. The actual statement of the result is rather technical so we take a moment now to sketch the approach.

The key tool is the corollary of the Balog-Szemer{\'e}di-Gowers-Fre{\u\i}man theorem established in \S\ref{sec.d}. This may be applied individually to the fibres of $\mathcal{A}$ in each case, producing a subgroup on which the fibre is very dense. If all of these subgroups are very different then it is easy to see that $\Lambda(\mathcal{A})$ must be large; if not then by expanding them a little bit we find one subgroup on which a lot of fibres of $\mathcal{A}$ are very dense and we may use Proposition \ref{prop.largeL} to get that $\Lambda(\mathcal{A})$ is large.

Concretely, then, the purpose of this section is to prove the following.
\begin{lemma}\label{prop.highnrg}
Suppose that $H$ is a finite abelian group of exponent $2$, and $\mathcal{A}=(A_h)_{h \in H}$ is a family on $H$ of density $\alpha$ such that
\begin{equation*}
\sup_{\gamma \neq 0_{\wh{H}}}{|\wh{f_\mathcal{A}}(\gamma)|} \leq L \alpha^2,
\end{equation*}
for some parameter $L \geq 1$. Suppose, further that $S$ is a set of density $\sigma$ and $K\geq 1$ is a parameter such that
\begin{enumerate}
\item $K\alpha \geq f_\mathcal{A}(h) \geq K \alpha/2$ for all $h \in S$;
\item and $\|1_{A_h} \ast 1_{A_h}\|_{L^2(H)}^2 \geq cf_{\mathcal{A}}(h)^3$ for all $h \in S$.
\end{enumerate}
Then
\begin{equation*}
\Lambda(\mathcal{A}) \geq  \exp(-O(L(\log^2\alpha^{-1}+\log \sigma^{-1})\exp(O((c^{-1}+K^{1/2}c^{-1/2})\log c^{-1})))).
\end{equation*}
\end{lemma}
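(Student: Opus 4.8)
The plan is to push the family through the Balog--Szemer{\'e}di--Gowers--Fre{\u\i}man machinery one fibre at a time and then split into two cases. Fix a parameter $\epsilon\in(0,1]$, to be optimised at the very end (it will turn out $\epsilon\asymp (c/K)^{1/2}$). For each $h\in S$, hypotheses (i) and (ii) let me apply Corollary~\ref{cor.bsgcor} to the fibre $A_h$: I obtain $x_h\in H$ and a subgroup $H_h'\leq H$ of index at most $E_1/(K\alpha)$, where $E_1:=\exp(O((c^{-1}+\epsilon^{-1})\log c^{-1}))$, such that $B_h:=A_h\cap(x_h+H_h')-x_h\subseteq H_h'$ has relative density $\beta_h\geq c/2$ in $H_h'$ and is $\epsilon$-uniform, i.e.\ $\sup_{\gamma\neq 0}|\wh{1_{B_h}}(\gamma)|\leq\epsilon\beta_h$. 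In particular $\P_H(B_h)\geq cK\alpha/(2E_1)$ for every $h\in S$.

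Next I would see what $\Lambda(\mathcal A)$ makes of these structured pieces. Since $1_{A_h}\ast 1_{A_h}\geq 1_{A_h\cap(x_h+H_h')}\ast 1_{A_h\cap(x_h+H_h')}\geq 0$ and $f_\mathcal A\geq 0$, throwing away the fibres outside $S$ gives
\[
\Lambda(\mathcal A)\;\geq\;\E_{h\in H}1_S(h)\,\big\langle \tau_h\big(1_{x_h+B_h}\ast 1_{x_h+B_h}\big),f_\mathcal A\big\rangle_{L^2(H)}.
\]
Expanding each inner product by Plancherel on $H$ and peeling off the trivial character (every character of $H$ being real), the $h$-th term is at least $\P_H(B_h)^2\alpha$ minus a tail which, using $\sup_{\gamma\neq 0}|\wh{f_\mathcal A}(\gamma)|\leq L\alpha^2$, is at most $L\alpha^2\sum_{\gamma\neq 0}|\wh{1_{B_h}}(\gamma)|^2\leq L\alpha^2\P_H(B_h)$ (when the index of $H_h'$ is large one instead splits this sum over the cosets of $H_h'$ and uses the $\epsilon$-uniformity of $B_h$ to keep the loss under control). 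So modulo constants the problem becomes that of estimating $\E_{h}1_S(h)\P_H(B_h)\big(\P_H(B_h)\cdot(\text{average of }f_\mathcal A\text{ over }h+H_h')-L\alpha^2\big)$, and I would run the dichotomy on this.

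In the first case---when the subgroups $H_h'$, $h\in S$, are ``spread out'', so that the cosets $h+H_h'$ do not conspire against the mass of $f_\mathcal A$ and $\P_H(B_h)$ is not too small relative to $L\alpha$---the main term dominates, the above is $\Omega(\sigma(cK\alpha/E_1)^2\alpha)$, and this is already much larger than the claimed bound. In the remaining case the displayed quantity is small, which forces the cosets $h+H_h'$ to avoid the mass of $f_\mathcal A$; exploiting the smallness of $\wh{f_\mathcal A}$ off $0$ I would deduce that, for $h$ in a not-too-sparse subset $S_1\subseteq S$, the subgroups $H_h'$ all contain a common subgroup $\tilde H$ of bounded index---this is the step the sketch calls ``expanding them a little bit'', and balancing the index of $\tilde H$ (which controls how much $\epsilon$-uniformity of $B_h$ one needs to restrict $B_h$ to a single coset of $\tilde H$ with retained relative density $\Omega(c)$) against $E_1$ is where the choice $\epsilon\asymp (c/K)^{1/2}$ comes in. Aligning these cosets (the by-now-routine fiddly bookkeeping of Lemmas~\ref{lem.coreincrement}--\ref{lem.coreincrement2}, each index halving costing $2^{-4}$, so $[H:\tilde H]^{-4}$ in total) and applying a dyadic pigeonhole to the relative densities, I arrive at a family on $\tilde H$ whose density function has the form $\delta 1_{S'}$ with $\delta=\Omega(c)$ and $\P_{\tilde H}(S')$ bounded below in terms of $\alpha$, $\sigma$ and the parameters; Proposition~\ref{prop.largeL} applied here, undone through the descent, gives $\Lambda(\mathcal A)\geq [H:\tilde H]^{-4}\exp(-O(c^{-1}\log\P_{\tilde H}(S')^{-1}))$, and inserting the bounds for $[H:\tilde H]$ and $\P_{\tilde H}(S')$ and optimising $\epsilon$ yields the theorem.

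The hard part is the second case: extracting from ``the cosets $h+H_h'$ avoid the mass of $f_\mathcal A$'' an honest common subgroup $\tilde H$ of controlled index on which $\Omega(1)$-many fibres of $\mathcal A$ are $\Omega(c)$-dense over a set $S'$ that is not too sparse, and then threading the quantitative dependencies---the choice $\epsilon\asymp(c/K)^{1/2}$, the descent loss $[H:\tilde H]^{-4}$, and the $L(\log^2\alpha^{-1}+\log\sigma^{-1})$ factor, which enters through the size of $S'$---cleanly through Proposition~\ref{prop.largeL}. Everything else (the per-fibre Balog--Szemer{\'e}di--Gowers--Fre{\u\i}man application, the Plancherel split, and the coset alignments) is routine given the tools already assembled.
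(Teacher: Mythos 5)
Your opening matches the paper: apply Corollary \ref{cor.bsgcor} fibre-by-fibre with $\epsilon\asymp\sqrt{c/K}$, and split according to whether the coset $h+H_h$ captures an $\Omega(\alpha)$-fraction of the mass of $f_\mathcal{A}$ (your ``spread out'' case is the paper's $S_0:=\{h\in S:(f_\mathcal{A}\ast\P_{H_h})(h)\geq\alpha/2\}$, and your Case-1 output $\Omega(\sigma(cK\alpha/E_1)^2\alpha)$ is the paper's). But already there your error estimate does not close as written: Plancherel on all of $H$ with the global bound $|\wh{f_\mathcal{A}}(\gamma)|\leq L\alpha^2$ gives a term of the shape $\P_H(B_h)\alpha(\P_H(B_h)-L\alpha)$, and since $\P_H(B_h)\leq\exp(-\Omega(c^{-1}\log c^{-1}))\alpha\ll L\alpha$ this is typically negative. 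The paper's actual Case-1 argument evaluates the inner product over the single coset $h+H_h$ (Plancherel on $H_h$), and controls the nontrivial modes by the $\epsilon$-uniformity of $A_h'$ together with Cauchy--Schwarz and the defining property of $S_0$; your parenthetical gestures at this, but that is where the content lies, not an afterthought.

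The genuine gap is your Case 2. You propose to find a common subgroup $\tilde H$ \emph{contained in} many of the per-fibre subgroups $H_h'$, with relative index so small (about $\epsilon^{-1}$) that $\epsilon$-uniformity lets each $B_h$ retain density $\Omega(c)$ on a coset of $\tilde H$. There is no mechanism to produce such a $\tilde H$: the subgroups $H_h$ come from applying the Balog--Szemer\'edi--Gowers--Fre\u{\i}man corollary to unrelated fibres, they can be pairwise transverse, and the smallness of $\wh{f_\mathcal{A}}$ off $0$ constrains $f_\mathcal{A}$, not the $H_h$. The paper goes in the opposite direction: for $h\in S_1$ the failure of the coset-average condition forces $\sum_{0\neq\gamma\in H_h^\perp}|\wh{f_\mathcal{A}}(\gamma)|\geq\alpha/2$, and the hypothesis $|\wh{f_\mathcal{A}}(\gamma)|\leq L\alpha^2$ then forces at least $\alpha^{-1}/4L$ elements of the large spectrum $\mathcal{L}'$ of $f_\mathcal{A}$ to lie in $H_h^\perp$; taking $d\approx\log_2(\alpha^{-1}/4L)$ independent such characters and passing to their annihilator yields an \emph{enlarged} subgroup $H_h'\supseteq H_h$ of density at most $8L\alpha$, drawn from a collection of at most $\binom{|\mathcal{L}'|}{d}$ candidates (with $|\mathcal{L}'|$ bounded via Parseval), so a pigeonhole gives one common $H'$ for a $\binom{|\mathcal{L}'|}{d}^{-1}$-proportion of $S_1$ --- this is exactly where the $\log^2\alpha^{-1}$ in the bound is born. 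Moreover the fibres then have density only $\delta\geq L^{-1}\exp(-O((c^{-1}+K^{1/2}c^{-1/2})\log c^{-1}))$ inside $H'$, not $\Omega(c)$ as you claim (the common subgroup has density about $L\alpha$, vastly larger than $\P_H(H_h)$), and it is this $\delta^{-1}$, fed into Proposition \ref{prop.largeL}, that produces the factor $L\exp(O(\cdots))$ multiplying $\log^2\alpha^{-1}+\log\sigma^{-1}$ in the statement. Your accounting, which keeps $\delta=\Omega(c)$ and attributes the $L$ to the size of $S'$, would yield a bound of a strictly stronger shape than the lemma --- a sign that the construction you describe cannot be carried out.
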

\begin{proof}
By Corollary \ref{cor.bsgcor} (with $\epsilon = 2^{-2}\sqrt{c/K}$) we see that for each $h \in S$ there is an element $x_h \in H$ and a subgroup $H_h \leq H$ such that the set $A_h':=A_h \cap (x_h+H_h)-x_h$ has
\begin{equation*}
\P_H(H_h) =\exp(-O((c^{-1} +K^{1/2}c^{-1/2})\log c^{-1}))f_\mathcal{A}(h) \textrm{ and } \P_{H_h}(A_h')\geq c/2,
\end{equation*}
and, furthermore,
\begin{equation}\label{eqn.qrness}
\sup_{\gamma \neq 0_{\wh{H_h}}}{|\wh{1_{A_h'}}(\gamma)|} \leq \epsilon \P_{H_h}(A_h').
\end{equation}

Now, let $S_0:=\{h \in S: (f_\mathcal{A}\ast \P_{H_h})(h) \geq \alpha/2\}$ and $S_1 :=S\setminus S_0$; we shall now split into two cases according to which of $S_0$ or $S_1$ is larger.

\begin{case} Suppose that $\P_H(S_0) \geq \sigma/2$. Then
\begin{equation*}
\Lambda(\mathcal{A}) \geq \alpha^3\sigma \exp(-O((c^{-1}+K^{1/2}c^{-1/2})\log c^{-1})).
\end{equation*}
\end{case}
\begin{proof}
By non-negativity of the terms in $\Lambda(\mathcal{A})$ we have that
\begin{equation*}
\Lambda(\mathcal{A})  \geq \E_{h \in H}{1_{{S_0}}(h)\langle \tau_h(1_{A_h} \ast 1_{A_h}),f_\mathcal{A} \rangle_{L^2(H)}}.
\end{equation*}
We analyse these inner products individually. Suppose that $h \in S_0$ and note that
\begin{equation*}
\langle \tau_h(1_{A_h} \ast 1_{A_h}),f_\mathcal{A} \rangle_{L^2(H)} \geq \P_H(H_h)^2\langle \tau_h(1_{A_h'} \ast 1_{A_h'}),f_\mathcal{A} \rangle_{L^2(h+H_h)}.
\end{equation*}
As usual this inner product is analysed using the Fourier transform: by Plancherel's theorem we have that
\begin{equation*}
\langle 1_{A_h'} \ast 1_{A_h'},\tau_{-h}(f_\mathcal{A}) \rangle_{L^2(H_h)} = \sum_{\gamma \in \wh{H_h}}{|\wh{1_{A_h'}}(\gamma)|^2\wh{\tau_{-h}(f_\mathcal{A})}(\gamma)}.
\end{equation*}
Separating out the contribution from the trivial character we get
\begin{eqnarray}
\label{eqn.earlier} \langle 1_{A_h'} \ast 1_{A_h'},\tau_{-h}(f_\mathcal{A}) \rangle_{L^2(H_h)} &  \geq & \P_{H_h}(A_h')^2(f_\mathcal{A}\ast \P_{H_h})(h)\\ \nonumber & & -  \sum_{\gamma \neq 0_{\wh{H_h}}}{|\wh{1_{A_h'}}(\gamma)|^2|\wh{\tau_{-h}(f_\mathcal{A})}(\gamma)|}.
\end{eqnarray}
This last term sum can be estimated as follows using H{\"o}lder's inequality and the Cauchy-Schwarz inequality:
\begin{eqnarray*}
\sum_{\gamma \neq 0_{\wh{H_h}}}{|\wh{1_{A_h'}}(\gamma)|^2|\wh{\tau_{-h}(f_\mathcal{A})}(\gamma)|}  & \leq & \sup_{\gamma \neq 0_{\wh{H_h}}}{|\wh{1_{A_h'}}(\gamma)|}.\sum_{\gamma \in \wh{H_h}}{|\wh{1_{A_h'}}(\gamma)||\wh{\tau_{-h}(f_\mathcal{A})}(\gamma)|} \\ & \leq &\sup_{\gamma \neq 0_{\wh{H_h}}}{|\wh{1_{A_h'}}(\gamma)|}.\left(\sum_{\gamma \in \wh{H_h}}{|\wh{1_{A_h'}}(\gamma)|^2}\right)^{1/2}\\ & & \times\left(\sum_{\gamma \in \wh{H_h}}{|\wh{\tau_{-h}(f_\mathcal{A})}(\gamma)|^2}\right)^{1/2}
\end{eqnarray*}
By Parseval's theorem
\begin{equation*}
\sum_{\gamma \in \wh{H_h}}{|\wh{1_{A_h'}}(\gamma)|^2}=\P_{H_h}(A_h') \textrm{ and }\sum_{\gamma \in \wh{H_h}}{|\wh{\tau_{-h}(f_\mathcal{A})}(\gamma)|^2}=\|f_\mathcal{A}\|_{L^2(h+H_h)}^2,
\end{equation*}
and combining all this with (\ref{eqn.qrness}) tells us that
\begin{eqnarray*}
\sum_{\gamma \neq 0_{\wh{H_h}}}{|\wh{1_{A_h'}}(\gamma)|^2|\wh{\tau_{-h}(f_\mathcal{A})}(\gamma)|}  & \leq & \epsilon \P_{H_h}(A_h')^{3/2}\|f_\mathcal{A}\|_{L^2(h+H_h)}\\ & \leq & \epsilon \P_{H_h}(A_h')^{3/2}\sqrt{2K} (f_\mathcal{A} \ast \P_{H_h})(h).
\end{eqnarray*}
The last inequality here follows from the fact that $h\in S_0$ ensures that $f_\mathcal{A}(h) \leq K\alpha$ and $(f_\mathcal{A} \ast \P_{H_h})(h) \geq \alpha/2$. Finally, our choice of $\epsilon$ tells us that
\begin{equation*}
\sum_{\gamma \neq 0_{\wh{H_h}}}{|\wh{1_{A_h'}}(\gamma)|^2|\wh{\tau_{-h}(f_\mathcal{A})}(\gamma)|}  \leq  \P_{H_h}(A_h')^{2}(f_\mathcal{A} \ast \P_{H_h})(h)/2,
\end{equation*}
whence, inserting this in (\ref{eqn.earlier}), we get that
\begin{equation*}
 \langle 1_{A_h'} \ast 1_{A_h'},\tau_{-h}(f_\mathcal{A}) \rangle_{L^2(H_h)}  \geq \P_{H_h}(A_h')^2(f_\mathcal{A}\ast \P_{H_h})(h)/2.
\end{equation*}
Thus, our earlier averaging tells us that
\begin{equation*}
\Lambda(\mathcal{A}) \geq \E_{h \in H}{1_{S_0}(h)\P_H(H_h)^2.\P_{H_h}(A_h')^2(f_\mathcal{A}\ast \P_{H_h})(h)/2},
\end{equation*}
and hence immediately that
\begin{eqnarray*}
\Lambda(\mathcal{A}) & \geq & 2^{-3}c^2\alpha\E_{h \in H}{1_{S_0}(h)\P_H(H_h)^2}\\ & = & \alpha^3\sigma \exp(-O((c^{-1}+K^{1/2}c^{-1/2})\log c^{-1})).
\end{eqnarray*}
The case is complete.
\end{proof}

\begin{case}
Suppose that $\P_H(S_1) \geq \sigma/2$. Then
\begin{equation*}
\Lambda(\mathcal{A}) \geq \exp(-O(L(\log^2\alpha^{-1}+\log \sigma^{-1})\exp(O((c^{-1}+K^{1/2}c^{-1/2})\log c^{-1})))).
\end{equation*}
\end{case}
\begin{proof}
Suppose that $h \in S_1$ so that $(f_\mathcal{A} \ast \P_{H_h})(h) \leq \alpha/2$. By the Fourier inversion formula we have
\begin{equation*}
 \sum_{ \gamma \in H_h^\perp}{\wh{f_\mathcal{A}}(\gamma)\gamma(h)} =(f_\mathcal{A} \ast \P_{H_h})(h)  \leq \alpha/2.
\end{equation*}
Separating out the trivial mode where $\wh{f_\mathcal{A}}(0_{\wh{H}})=\alpha$ and apply the triangle inequality we have that
\begin{equation*}
\sum_{0_{\wh{H}} \neq \gamma \in H_h^\perp}{|\wh{f_\mathcal{A}}(\gamma)|} \geq \alpha/2.
\end{equation*}
Write $\mathcal{L}':=\{\gamma : |\wh{f_\mathcal{A}}(\gamma)| \geq \P_H(H_h)\alpha/4\}$ and note that since $|H_h^\perp| = \P_{H}(H_h)^{-1}$ we have
\begin{equation*}
\sum_{0_{\wh{H}} \neq \gamma \in \mathcal{L}'\cap H_h^\perp}{|\wh{f_\mathcal{A}}(\gamma)|}  \geq \sum_{0_{\wh{H}} \neq \gamma \in H_h^\perp}{|\wh{f_\mathcal{A}}(\gamma)|}  -\sum_{\gamma \in H_h^\perp \setminus \mathcal{L}'}{|\wh{f_\mathcal{A}}(\gamma)|} \geq \alpha/4.
\end{equation*}
Since $\sup_{\gamma \neq 0_{\wh{H}}}{|\wh{f_\mathcal{A}}(\gamma)|} \leq L\alpha^2$ we conclude that
\begin{equation*}
|\mathcal{L}'\cap H_h^\perp| \geq \alpha^{-1}/4L.
\end{equation*}
Let $I_h \subset \mathcal{L}'\cap H_h^\perp$ be a set of $d:=\lfloor \log_2 ( \alpha^{-1}/4L)\rfloor$ independent elements -- possible since $2^d \leq |\mathcal{L}' \cap H_h^\perp|$ -- and put $H_h':=I_h^\perp$. Since $I_h \subset H_h^\perp$, it follows that $H_h'=I_h^\perp \supset H_h$, whence $H_h \leq H_h'$. Since the elements of $I_h$ are independent we have that
\begin{equation*}
\P_H(H_h') =|I_h|^{-1} = 2^{-d} \leq 8L \alpha.
\end{equation*}
Since $h \in S_1$ we also have
\begin{equation*}
\P_H(H_h) = \exp(-O((c^{-1}+K^{1/2}c^{-1/2})\log c^{-1}))\alpha,
\end{equation*}
whence
\begin{equation*}
|H_h':H_h| \leq  L\exp(O((c^{-1}+K^{1/2}c^{-1/2})\log c^{-1})),
\end{equation*}
and it follows that 
\begin{equation*}
\P_{H_h'}(A_h') \geq L^{-1}\exp(-O((c^{-1}+K^{1/2}c^{-1/2})\log c^{-1})).
\end{equation*}
Thus there is some $\delta$ with $\delta |H|$ an integer and
\begin{equation*}
\delta \geq L^{-1}\exp(-O((c^{-1}+K^{1/2}c^{-1/2})\log c^{-1}))
\end{equation*}
such that $\P_{H_h'}(A_h') \geq \delta$ for all $h \in S_1$; for each $h \in S_1$ let $A_h''$ be a subset of $A_h'$ of density $\delta$.

Each $H_h'$ is defined by the set $I_h$ and there are at most $\binom{|\mathcal{L}'|}{d}$ such sets since $I_h \subset \mathcal{L}'$. It follows that there is some space $H' \leq H$ such that $H_h'=H'$ for at least a proportion $\binom{|\mathcal{L}'|}{d}^{-1}$ of the elements of $S_1$; call this set $S_2$.

We now turn to estimating the density of $S_2$. First, by Parseval's theorem
\begin{equation*}
|\mathcal{L}'|(\P_H(H_h)\alpha/4)^2 \leq \sum_{\gamma \in \wh{H}}{|\wh{f_\mathcal{A}}(\gamma)|^2} =\|f_\mathcal{A}\|_{L^2(H)}^2 \leq K\alpha^2.
\end{equation*}
It follows from the lower bounds in $\P_H(H_h)$ that
\begin{equation*}
|\mathcal{L}'| \leq \alpha^{-2}\exp(O((c^{-1}+K^{1/2}c^{-1/2})\log c^{-1})),
\end{equation*}
whence
\begin{equation*}
\binom{|\mathcal{L}'|}{d} \leq \exp(O((c^{-1}+K^{1/2}c^{-1/2})\log^2\alpha^{-1}\log c^{-1})).
\end{equation*}
This tells us that
\begin{equation*}
\P_H(S_2) \geq \P_H(S_1)/\binom{|\mathcal{L}'|}{d} \geq \sigma\exp(-O((c^{-1}+K^{1/2}c^{-1/2})\log^2\alpha^{-1}\log c^{-1})).
\end{equation*}
Finally, by averaging let $h_1+H'$ be a coset of $H'$ on which $S_2$ has at least the above density and define a new family $\mathcal{A}'''$ on $H'$ as follows. For each $h' \in S_2-h_1$, let $A_{h'}''':=A_{h_1+h'}''$; if $h' \in H' \setminus (S_2-h_1)$ then let $A_{h'}''':=\emptyset$. By the definition of $S_2$ for each $h' \in S_2-h_1$ $A_{h_1+h'}''$ is a subset of $H'=H_{h_1+h'}'$ of density $\delta$. Thus by Proposition \ref{prop.largeL} we have
\begin{eqnarray*}
\Lambda(\mathcal{A}''') & = & \exp(-O(\delta^{-1}(\log^2\alpha^{-1}(c^{-1}+K^{1/2}c^{-1/2})\log c^{-1}+\log \sigma^{-1})))\\ & = & \exp(-O(L(\log^2\alpha^{-1}+\log \sigma^{-1})\exp(O((c^{-1}+K^{1/2}c^{-1/2})\log c^{-1})))).
\end{eqnarray*}
Finally it remains for us to check that $|H|^4\Lambda(\mathcal{A}) \geq |H'|^4\Lambda(\mathcal{A}''')$ from which the case follows; we proceed in the usual manner. 

There are $|H'|^4\Lambda(\mathcal{A}')$ quadruples $(a_0',a_1',y',h')$ with $a_0', a_1' \in A_{h'}'''$ and $y' \in A_{a_0'+a_1'-h'}'''$. Every such quadruple corresponds uniquely to a quadruple
\begin{equation*}
(a_0,a_1,y,h):=(a_0'+x_{h_1+h'},a_1'+x_{h_1+h'},y'+x_{a_0'+a_1'-h'+h_1},h_1+h')
\end{equation*}
with $a_0,a_1 \in A_h$ and $y \in A_{a_0+a_1-h}$, whence $|H'|^4\Lambda(\mathcal{A}) \leq |H|^4\Lambda(\mathcal{A})$ and the result follows.
\end{proof}
Having concluded both cases it remains to note that certainly one of $\P_H(S_1)$ and $\P_H(S_0)$ is at least $\sigma/2$ and so at least one of the cases occurs.
\end{proof}

\section{Families with small mean square density}\label{sec.fams}

In this section we use our previous work to establish the following lemma which is the main driver in the proof of Theorem \ref{thm.weighted} in the case when the density function has small mean square.
\begin{lemma}\label{lem.mainitlem}
Suppose that $H$ is a finite abelian group of exponent $2$, $\mathcal{A}=(A_h)_{h \in H}$ is a family on $H$ of density $\alpha$, $\|f_\mathcal{A}\|_{L^2(H)}^2=K\alpha^2$ and $L \geq \max\{K,2\}$ is a parameter. Then there is an absolute constant $C_{\mathcal{S}}>0$ such that either
\begin{equation*}
\Lambda(\mathcal{A}) \geq \exp(-(1+\log^2\alpha^{-1})\exp(C_{\mathcal{S}}L^3\log^2 L)))
\end{equation*}
or there is a subgroup $H' \leq H$ of index $2$ and a family $\mathcal{A}'$ on $H'$ such that
\begin{equation*}
\P_{H'}(\mathcal{A}') \geq \alpha+L\alpha^2/4K \textrm{ and } \Lambda(\mathcal{A}) \geq 2^{-4}\Lambda(\mathcal{A}').
\end{equation*}
\end{lemma}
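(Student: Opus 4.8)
The plan is to expand $\Lambda(\mathcal{A})$ by Plancherel's theorem, strip off its principal term, and then run the familiar dichotomy: if a single non-trivial Fourier mode of $f_\mathcal{A}$ is already large we increment the density of the family directly via Lemma~\ref{lem.coreincrement2}, and otherwise $f_\mathcal{A}$ is essentially flat, so the fibres must account for the deficit in $\Lambda(\mathcal{A})$ and we extract from them a structured sub-family to which Lemma~\ref{prop.highnrg} applies. Concretely, arguing exactly as in the proof of Lemma~\ref{lem.itlem1}, I would first write
\[
\Lambda(\mathcal{A}) = \E_{h \in H} \sum_{\gamma \in \wh{H}} |\wh{1_{A_h}}(\gamma)|^2 \gamma(h) \wh{f_\mathcal{A}}(\gamma),
\]
and note that the $\gamma = 0_{\wh H}$ term contributes $\big(\E_h f_\mathcal{A}(h)^2\big)\wh{f_\mathcal{A}}(0_{\wh H}) = \|f_\mathcal{A}\|_{L^2(H)}^2\,\P_H(\mathcal{A}) = K\alpha^3$. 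If $\Lambda(\mathcal{A}) \geq K\alpha^3/2$ we are in the first alternative, since $K\alpha^3/2 \geq \alpha^3/2$ comfortably beats $\exp(-(1+\log^2\alpha^{-1})\exp(C_{\mathcal S}L^3\log^2 L))$ once $C_{\mathcal S}$ is a large enough absolute constant and $L \geq 2$. So from now on assume $\Lambda(\mathcal{A}) < K\alpha^3/2$; isolating the trivial mode and applying the triangle inequality gives
\[
\E_{h \in H}\sum_{\gamma \neq 0_{\wh H}} |\wh{1_{A_h}}(\gamma)|^2 |\wh{f_\mathcal{A}}(\gamma)| \;\geq\; K\alpha^3/2 .
\]

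Next I would split on $M := \sup_{\gamma \neq 0_{\wh H}} |\wh{f_\mathcal{A}}(\gamma)|$. If $M \geq L\alpha^2/4K$, then Lemma~\ref{lem.coreincrement2}, applied at a character realising the supremum, hands us an index-$2$ subgroup $H' \leq H$ and a family $\mathcal{A}'$ on $H'$ with $\Lambda(\mathcal{A}) \geq 2^{-4}\Lambda(\mathcal{A}')$ and $\P_{H'}(\mathcal{A}') \geq \alpha + M \geq \alpha + L\alpha^2/4K$ — precisely the second alternative. So we may assume $M < L\alpha^2/4K$, hence in particular $M < L\alpha^2$, which is the hypothesis that will let us invoke Lemma~\ref{prop.highnrg} with parameter $L$. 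Feeding the last display through Cauchy--Schwarz in $\gamma$, and using $\sum_{\gamma \neq 0_{\wh H}}|\wh{f_\mathcal{A}}(\gamma)|^2 = (K-1)\alpha^2$ together with $\sum_{\gamma}|\wh{1_{A_h}}(\gamma)|^4 = \|1_{A_h}\ast 1_{A_h}\|_{L^2(H)}^2$, I would deduce a lower bound for the averaged fibre energy $\E_h\|1_{A_h}\ast 1_{A_h}\|_{L^2(H)}^2$ — and, after removing the diagonal contribution, for the non-principal energy $\E_h\sum_{\gamma\neq 0_{\wh H}}|\wh{1_{A_h}}(\gamma)|^4$ — in terms of $\alpha$, $K$ and $L$.

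It then remains to turn this averaged statement into a single family satisfying the hypotheses of Lemma~\ref{prop.highnrg}. Here I would dyadically decompose the fibres according to the size of $f_\mathcal{A}(h)$, but — exactly as in the proof of Corollary~\ref{cor.kuy} — use the $\epsilon$-smoothed pigeonhole with $\epsilon \asymp 1/(1+\log L)$, so that the level $S_i := \{ h : 2^{-(i+1)} < f_\mathcal{A}(h) \leq 2^{-i}\}$ one picks out has density ratio $K' := 2^{-i}/\alpha$ bounded by a fixed power of $L$; inside $S_i$ I would keep only the sub-collection $S$ of fibres carrying their proportional share of the non-principal energy, so that $\|1_{A_h}\ast 1_{A_h}\|_{L^2(H)}^2 \geq c\, f_\mathcal{A}(h)^3$ holds for all $h \in S$ with $c^{-1}$ again bounded by a fixed power of $L$, while $\sigma := \P_H(S)$ has $\log\sigma^{-1} = O(\log^2\alpha^{-1})$. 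Applying Lemma~\ref{prop.highnrg} to $\mathcal{A}$ with this $S$ and the parameters $L$, $c$, $K'$ would give
\[
\Lambda(\mathcal{A}) \;\geq\; \exp\!\big(-O\big(L(\log^2\alpha^{-1}+\log\sigma^{-1})\exp(O((c^{-1}+(K')^{1/2}c^{-1/2})\log c^{-1}))\big)\big),
\]
and since $c^{-1}$ and $K'$ are at most a fixed power of $L$, the inner exponential is $\exp(O(\mathrm{poly}(L)))$ and the whole expression is $\geq \exp(-(1+\log^2\alpha^{-1})\exp(C_{\mathcal S}L^3\log^2 L))$ for a suitable absolute $C_{\mathcal S}$, i.e.\ the first alternative.

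The hard part will be exactly this last extraction: one has to choose the dyadic level and the energy threshold so that \emph{simultaneously} the density ratio $K'$ and the reciprocal energy parameter $c^{-1}$ remain polynomial in $L$, with $\sigma$ not too small — managing the interplay between the $\epsilon$-smoothing borrowed from Corollary~\ref{cor.kuy} and the energy inequality extracted above is where the real work lies, and is presumably why the statement carries an $L^3\log^2 L$ rather than something smaller. The only remaining chore — checking $\Lambda(\mathcal{A}) \geq 2^{-4}\Lambda(\mathcal{A}')$ for the family produced in the increment case — is routine, being the same quadruple-counting bijection used over and over in \S\ref{sec.dense}.
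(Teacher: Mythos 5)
Your high-level architecture (Plancherel, strip the trivial mode, dichotomy between a density increment and an application of Lemma \ref{prop.highnrg}) is the same as the paper's, but there is a genuine gap at the central step, and it is exactly the step you defer as ``the hard part''. The paper's second alternative is reserved not only for the case where some $|\wh{f_\mathcal{A}}(\gamma)|$ is large (your Lemma \ref{lem.coreincrement2} case) but also for the case where a single non-trivial character has large \emph{average fibre} coefficient, i.e. $\E_{h}{1_{S_i}(h)|\wh{1_{A_h}}(\gamma)|}\geq L\alpha^2/4K$, which is handled by the simultaneous increment Lemma \ref{lem.coreincrement}. Ruling out both cases gives the paper the pointwise-in-$\gamma$ bounds $\sup_{\gamma\neq 0_{\wh H}}\E_h{1_{S_i}(h)|\wh{1_{A_h}}(\gamma)|^2}\leq L\alpha^3$ and $\sup_{\gamma\neq 0_{\wh H}}\E_h{1_{S_i}(h)|\wh{1_{A_h}}(\gamma)|}\leq L\alpha^2/4K$, and it is these (via the convexity inequality $\bigl(\E_h{1_{S_i}|\wh{1_{A_h}}(\gamma)|^2}\bigr)^3\leq (L\alpha^2/4K)^2\,\E_h{1_{S_i}|\wh{1_{A_h}}(\gamma)|^4}$ together with a dyadic decomposition \emph{in $\gamma$} of the set of large modes) that convert the inequality $\sum_{\gamma\neq 0}\E_h{1_{S_i}|\wh{1_{A_h}}(\gamma)|^2}|\wh{f_\mathcal{A}}(\gamma)|\gtrsim \alpha\E_h{1_{S_i}f_\mathcal{A}^2}$ into a fibre-energy bound $\|1_{A_h}\ast 1_{A_h}\|_{L^2(H)}^2\geq c f_\mathcal{A}(h)^3$ with $c^{-1}$ polynomial in $L$. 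This is also why the increment in the statement is only $L\alpha^2/4K$ rather than $L\alpha^2$.

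Your substitute — Cauchy--Schwarz in $\gamma$ against $\bigl(\sum_{\gamma\neq 0}|\wh{f_\mathcal{A}}(\gamma)|^2\bigr)^{1/2}=O(\sqrt{K}\alpha)$ — only yields $\E_h\|1_{A_h}\ast 1_{A_h}\|_{L^2(H)}^2\gtrsim K\alpha^4$, which is short by a factor of order $\alpha$ of what is needed. Indeed, on the level set you would select ($f_\mathcal{A}(h)\asymp K'\alpha$ on a set of density $\sigma$), the best value of $c$ extractable from that average is about $K\alpha/(\sigma K'^3)$; in the typical case $\sigma\asymp 1/K'$ this is of order $\alpha$, so $c^{-1}\asymp\alpha^{-1}$, and Lemma \ref{prop.highnrg} then gives only $\Lambda(\mathcal{A})\geq\exp(-\exp(\Omega(\alpha^{-1})))$, far weaker than the first alternative you must reach. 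There is no choice of dyadic level and energy threshold that repairs this from the information you retain: you must treat ``some $\gamma\neq 0$ has $\E_h{1_{S}|\wh{1_{A_h}}(\gamma)|}$ large'' as a further density-increment case via Lemma \ref{lem.coreincrement}. A smaller point: your claim that the $\epsilon$-smoothed pigeonhole with $\epsilon\asymp 1/(1+\log L)$ bounds the density ratio $K'$ by a power of $L$ is not right as stated — that device (as in Corollary \ref{cor.kuy}) gives a \emph{lower} bound on the selected level; to cap $K'$ one first discards the fibres with $f_\mathcal{A}(h)\geq 4K\alpha$ and $f_\mathcal{A}(h)\leq\alpha/4$ using $\|f_\mathcal{A}\|_{L^2(H)}^2=K\alpha^2$, as the paper does, after which an ordinary pigeonhole over $O(\log K)$ classes suffices and $K'=O(K)=O(L)$.
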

\begin{proof}
Let $S_L:=\{h \in H:f_\mathcal{A}(h) \geq 4K\alpha\}$ and $S_S:=\{h \in H: f_\mathcal{A}(h) \leq \alpha/4\}$. Now, 
\begin{equation*}
\E_{h \in H}{1_{S_L}(h)f_\mathcal{A}(h)} \leq \frac{1}{4K\alpha}.\E_{h \in H}{1_{S_L}(h)f_\mathcal{A}(h)^2} \leq \frac{\alpha}{4},
\end{equation*}
and
\begin{equation*}
\E_{h \in H}{1_{S_S}(h)f_\mathcal{A}(h)} \leq \alpha/4
\end{equation*}
trivially, whence, putting $S:=H \setminus (S_L \cup S_S)$, we have that
\begin{equation*}
\E_{h \in H}{1_S(h)f_\mathcal{A}(h)} \geq \alpha/2.
\end{equation*}
Let $S_i:=\{h \in S: 2^{i-2}\alpha \leq f_\mathcal{A}(h) \leq 2^{i-1}\alpha\}$ and note that
\begin{equation*}
\sum_{i \leq \lceil \log K \rceil +1}{\E_{h \in H}{1_{S_i}(h).2^{i-1}\alpha}} \geq \alpha/2,
\end{equation*}
and thus by averaging there is some $i \leq \lceil \log K \rceil +1$ such that
\begin{equation*}
\E_{h \in H}{1_{S_i}(h).2^{i-1}\alpha} \geq \alpha/2(\lceil \log K \rceil +1).
\end{equation*}
As a by product note that $\P_H(S_i) = \Omega(1/K\log K)$ and we write $K_i = 2^{i-1}$ so that
\begin{equation*}
K_i \alpha \geq f_\mathcal{A}(h) \geq K_i\alpha/2 \textrm{ for all } h \in S_i
\end{equation*}
and
\begin{equation*}
\E_{h \in H}{1_{S_i}(h)f_\mathcal{A}(h)} = \Omega(\alpha/ (1+\log K)).
\end{equation*}

Now, if 
\begin{equation*}
\E_{h \in H}{1_{S_i}(h)|\wh{1_{A_h}}(\gamma)|^2} \geq L\alpha^3
\end{equation*}
then since $|\wh{1_{A_h}}(\gamma)| \leq 4K\alpha$ if $h \in S$ we conclude that
\begin{equation*}
\E_{h \in H}{1_{S_i}(h)|\wh{1_{A_h}}(\gamma)|} \geq L\alpha^2/4K.
\end{equation*}
Applying Lemma \ref{lem.coreincrement} we find we are in the second case of Lemma \ref{lem.mainitlem}.  Similarly, by Lemma \ref{lem.coreincrement2} we are done if $|\wh{f_\mathcal{A}}(\gamma)| \geq L\alpha^2/4K$. Thus we may assume that
\begin{equation}\label{eqn.asm1}
\sup_{\gamma \neq  0_{\wh{H}}}{\E_{h \in H}{1_{S_i}(h)|\wh{1_{A_h}}(\gamma)|^2} } \leq L\alpha^3,
\end{equation}
\begin{equation}\label{eqn.asm2}
\sup_{\gamma \neq  0_{\wh{H}}}{\E_{h \in H}{1_{S_i}(h)|\wh{1_{A_h}}(\gamma)|}} \leq L\alpha^2/4K
\end{equation}
and
\begin{equation}\label{eqn.asm3}
\sup_{\gamma \neq  0_{\wh{H}}}{|\wh{f_\mathcal{A}}(\gamma)|} \leq L\alpha^2/4K.
\end{equation}
As usual, by non-negativity of the terms in $\Lambda(\mathcal{A})$ we have that
\begin{equation*}
\Lambda(\mathcal{A}) \geq \E_{h \in H}{1_{S_i}(h)\langle \tau_h(1_{A_h} \ast 1_{A_h}),f_\mathcal{A} \rangle_{L^2(H)}}.
\end{equation*}
We apply Plancherel's theorem to the inner products on the right to get
\begin{equation*}
\langle \tau_h(1_{A_h} \ast 1_{A_h}),f_\mathcal{A} \rangle_{L^2(H)}=\sum_{\gamma \in \wh{H}}{|\wh{1_{A_h}}(\gamma)|^2\wh{f_\mathcal{A}}(\gamma)\gamma(h)}.
\end{equation*}
Separating out the trivial mode and applying the triangle inequality then tells us that
\begin{equation*}
\langle \tau_h(1_{A_h} \ast 1_{A_h}),f_\mathcal{A} \rangle_{L^2(H)} \geq f_\mathcal{A}(h)^2\alpha - \sum_{\gamma \neq 0_{\wh{H}}}{|\wh{1_{A_h}}(\gamma)|^2|\wh{f_\mathcal{A}}(\gamma)|}.
\end{equation*}
Thus
\begin{equation*}
\sum_{\gamma \neq 0_{\wh{H}}}{\E_{h \in H}{1_{S_i}(h)|\wh{1_{A_h}}(\gamma)|^2}|\wh{f_\mathcal{A}}(\gamma)|} \geq \alpha\E_{h \in H}{1_{S_i}(h)f_\mathcal{A}(h)^2} - \Lambda(\mathcal{A}).
\end{equation*}
It follows that either
\begin{equation*}
\Lambda(\mathcal{A}) \geq \alpha\E_{h \in H}{1_{S_i}(h)f_\mathcal{A}(h)^2}/2 =\Omega(\alpha^3/(1+\log K)),
\end{equation*}
and we are done or
\begin{equation}\label{eqn.rop}
\sum_{\gamma \neq 0_{\wh{H}}}{\E_{h \in H}{1_{S_i}(h)|\wh{1_{A_h}}(\gamma)|^2}|\wh{f_\mathcal{A}}(\gamma)|} \geq \alpha\E_{h \in H}{1_{S_i}(h)f_\mathcal{A}(h)^2}/2,
\end{equation}
which we now assume. Let 
\begin{equation*}
\mathcal{L}:=\left\{\gamma \in \wh{H}:\E_{h \in H}{1_{S_i}(h)|\wh{1_{A_h}}(\gamma)|^2} \geq \frac{(\E_{h \in H}{1_{S_i}(h)f_\mathcal{A}(h)^2})^2}{2^4K\E_{h \in H}{1_{S_i}(h)f_\mathcal{A}(h)}}\right\}.
\end{equation*}
It is easy enough to see that
\begin{equation}\label{eqn.cla}
\sum_{\gamma \not\in \mathcal{L}}{\E_{h \in H}{1_{S_i}(h)|\wh{1_{A_h}}(\gamma)|^2}|\wh{f_\mathcal{A}}(\gamma)|} \leq \alpha\E_{h \in H}{1_{S_i}(h)f_\mathcal{A}(h)^2}/4
\end{equation}
as we shall now show. We apply the triangle inequality to the left hand side after swapping the order of summation to get that it is at most
\begin{equation*}
\sup_{\gamma \not \in \mathcal{L}}{(\E_{h \in H}{1_{{S_i}}(h) |\wh{1_{A_h}}(\gamma)|^2})^{1/2}}\sum_{\gamma \in \wh{H}}{(\E_{h \in H}{1_{{S_i}}(h) |\wh{1_{A_h}}(\gamma)|^2})^{1/2}|\wh{f_\mathcal{A}}(\gamma)|}.
\end{equation*}
Now apply the Cauchy-Schwarz inequality to this to see that the sum is at most
\begin{equation*}
\left(\sum_{\gamma \in \wh{H}}{\E_{h \in H}{1_{{S_i}}(h) |\wh{1_{A_h}}(\gamma)|^2}}\right)^{1/2}\left(\sum_{\gamma \in \wh{H}}{|\wh{f_\mathcal{A}}(\gamma)|^2}\right)^{1/2} = \sqrt{\E_{h \in H}{1_{{S_i}}(h)f_\mathcal{A}(h)}K\alpha^2}
\end{equation*}
by Parseval's theorem after interchanging the order of summation again. The bound (\ref{eqn.cla}) now follows from the definition of $\mathcal{L}$. Combining this with (\ref{eqn.rop}) we see that
\begin{eqnarray*}
\sum_{0_{\wh{H}}\neq \gamma \in \mathcal{L}}{\E_{h \in H}{1_{S_i}(h)|\wh{1_{A_h}}(\gamma)|^2}|\wh{f_\mathcal{A}}(\gamma)|} & \geq & \alpha\E_{h \in H}{1_{S_i}(h)f_\mathcal{A}(h)^2}/4\\ & = & \Omega(K_i\alpha^3/(1+\log K)).
\end{eqnarray*}
Write
\begin{equation*}
\mathcal{L}_j:=\{\gamma \in \wh{H}:2^{-j}L\alpha^3 \geq \E_{h \in H}{1_{S_i}(h)|\wh{1_{A_h}}(\gamma)|^2} \geq 2^{-(j+1)}L\alpha^3\},
\end{equation*}
and note that by (\ref{eqn.asm1}) we have that $\mathcal{L}\setminus\{0_{\wh{H}}\}=\bigcup_{j=0}^{j_0}{\mathcal{L}_j}$, where $j_0$ is the smallest integer such that
\begin{equation*}
2^{-(j_0+1)}L\alpha^3\leq (\E_{h \in H}{1_{S_i}(h)f_\mathcal{A}(h)^2})^2/2^4K\E_{h \in H}{1_{S_i}(h)f_\mathcal{A}(h)};
\end{equation*}
crucially
\begin{equation*}
j_0=O(\log L) \textrm{ and } 2^{-(j_0+1)}L\alpha^3 = \Omega(K_i^2\alpha^3/K(1+\log K)).
\end{equation*}
It follows by averaging (and since $L \geq \max\{2,K\}$) that there is some $j\leq j_0$ such that
\begin{equation*}
\sum_{0_{\wh{H}}\neq \gamma \in \mathcal{L}_j}{\E_{h \in H}{1_{S_i}(h)|\wh{1_{A_h}}(\gamma)|^2}|\wh{f_\mathcal{A}}(\gamma)|} =\Omega(K_i\alpha^3/(1+\log K)\log L).
\end{equation*}
Inserting (\ref{eqn.asm3}) and dividing gives that
\begin{equation*}
\sum_{0_{\wh{H}}\neq \gamma \in \mathcal{L}_j}{\E_{h \in H}{1_{S_i}(h)|\wh{1_{A_h}}(\gamma)|^2}}=\Omega(K_iK\alpha/(1+\log K)L\log L).
\end{equation*}

Now, the usual convexity of $L^p$-norms tells us that
\begin{equation*}
\E_{h \in H}{1_{S_i}(h)|\wh{1_{A_h}}(\gamma)|^2} \leq \left(\E_{h \in H}{1_{S_i}(h)|\wh{1_{A_h}}(\gamma)|}\right)^{2/3}\left(\E_{h \in H}{1_{S_i}(h)|\wh{1_{A_h}}(\gamma)|^4}\right)^{1/3}.
\end{equation*}
Thus, by (\ref{eqn.asm2}) we have
\begin{equation*}
\left(\E_{h \in H}{1_{S_i}(h)|\wh{1_{A_h}}(\gamma)|^2}\right)^3 \leq \frac{L^2\alpha^4}{2^4K^2}\E_{h \in H}{1_{S_i}(h)|\wh{1_{A_h}}(\gamma)|^4}.
\end{equation*}
Dividing out and summing over $\mathcal{L}_j$, using the fact that it is a dyadic range, tells us that
\begin{equation*}
\sum_{\gamma \in \wh{H}}{\E_{h \in H}{1_{S_i}(h)|\wh{1_{A_h}}(\gamma)|^4}} = \Omega(\alpha^3 K_i^5K/(1+\log K)^3L^3 \log L).
\end{equation*}
Thus Parseval's theorem reveals that
\begin{equation*}
\E_{h \in H}{1_{S_i}(h)}{\|1_{A_h} \ast 1_{A_h}\|_{L^2(H)}^2} = \Omega(\alpha^3 K_i^5K/(1+\log K)^3L^3 \log L).
\end{equation*}
Finally, let
\begin{equation*}
S_i':=\{h \in S_i: \|1_{A_h} \ast 1_{A_h}\|_{L^2(H)}^2\geq \E_{h \in H}{1_{S_i}(h)}{\|1_{A_h} \ast 1_{A_h}\|_{L^2(H)}^2} /2\}
\end{equation*}
 and note that if $h \in S_i'$ then $f_\mathcal{A}(h) \leq K_i\alpha$, whence
 \begin{equation*}
 \|1_{A_h} \ast 1_{A_h}\|_{L^2(H)}^2=\Omega(\alpha^3 K_i^2K/(1+\log K)^3L^3 \log L).
 \end{equation*}
 Furthermore
\begin{equation*}
\E_{h \in H}{1_{S_i'}(h)}{\|1_{A_h} \ast 1_{A_h}\|_{L^2(H)}^2} \geq \Omega(\alpha^3 K_i^5K/(1+\log K)^3L^3 \log L),
\end{equation*}
whence $\P_H(S) =\Omega(L^4)$. We now apply Lemma \ref{prop.highnrg} to see that
\begin{equation*}
\Lambda(\mathcal{A}) \geq \exp(-(1+\log^2\alpha^{-1})\exp(O(K^{-1}(1+\log K)^3L^3\log L)))).
\end{equation*}
However, $K^{-1}(1+\log K)^3=O(1)$, whence we get the result.
\end{proof}
It may seem bizarre to have thrown away the extra strength of the $K^{-1}(1+\log K)^3$-term at the very end of the above proof. However, in applications we shall have a dichotomy between the case when $K$ is large and when $K$ is small. In the latter we shall not, in fact, be able to guarantee that $K$ is much bigger than $1$ whence the above estimate of $K^{-1}(1+\log K)^3=O(1)$ is tight.

\section{Proof of Theorem \ref{thm.weighted}}\label{sec.end}

As will have become clear the proof of Theorem \ref{thm.weighted} is iterative and is driven by Lemma \ref{lem.mainitlem} and Corollary \ref{cor.kuy}.
\begin{proof}[Proof of Theorem \ref{thm.weighted}]
Let $H_0:=\Im 2$ and $\mathcal{A}_0$ be the family corresponding to the set $A$, which has density $\alpha_0=\alpha$. We shall define a sequence of families $(\mathcal{A}_i)_i$ on subgroups $(H_i)_i$ with density $\alpha_i$ and the following properties:
\begin{equation*}
\Lambda(\mathcal{A}_{i+1}) \leq 2^{-4}\Lambda(\mathcal{A}_i) \leq 2^{-4i}\Lambda(A)
\end{equation*}
and
\begin{equation*}
\alpha_{i+1} \geq \alpha_i(1+ \Omega(\alpha_i \log^{1/6} \alpha_i^{-1}\log\log^{-5/3}\alpha_i^{-1})).
\end{equation*}
It is useful to define the auxiliary variables $K_i$ and $L_i$: let $L_i$ be the solution to 
\begin{equation*}
C_\mathcal{S}L_i^3\log^2L_i = \log \alpha_i^{-1}/2 \textrm{ and }K_i:=\alpha_i^{-2}\|f_{\mathcal{A}_i}\|_{L^2(H_i)}^2.
\end{equation*}
Suppose that we are at stage $i$ of the iteration; we consider two cases:
\begin{enumerate}
\item If $L_i \leq 2+K_i^2/(1+\log K_i)^2$ then apply Corollary \ref{cor.kuy} and terminate the iteration with
\begin{eqnarray*}
\Lambda(\mathcal{A}_i) & = & \exp(-O(\alpha_i^{-1}K_i^{-1}\log^2 K_i))\\ & = & \exp(-O(\alpha^{-1}\log^{-1/6} \alpha^{-1}\log \log^{5/3} \alpha^{-1})).
\end{eqnarray*}
\item If $L_i>2+K_i^2/(1+\log K_i)^2$ then apply Lemma \ref{lem.mainitlem} with parameter $L_i$. If we have the first conclusion of the lemma then 
\begin{equation*}
\Lambda(\mathcal{A}_i) \geq \exp(-(1+\log\alpha_i^{-1})^2\exp(C_{\mathcal{S}}L_i^3\log^2 L_i)).
\end{equation*}
In view of the definition of $L_i$ and the fact that $\alpha_i \geq \alpha$ we conclude that $\exp(C_{\mathcal{S}}L_i^3\log^2 L_i)\leq \alpha^{-1/2}$, whence we certainly have
\begin{equation*}
\Lambda(\mathcal{A}_i) =\exp(-O(\alpha^{-1}\log^{-1/6} \alpha^{-1}\log \log^{5/3} \alpha^{-1}))
\end{equation*}
again. The other conclusion of Lemma \ref{lem.mainitlem} tells us that we have a new subgroup $H_{i+1} \leq H_i$, and a family $\mathcal{A}_{i+1}$ on $H_{i+1}$ with
\begin{equation*}
\alpha_{i+1} \geq \alpha_i(1+ (L_i/4K_i)\alpha^{-1}) \textrm{ and } \Lambda(\mathcal{A}_{i+1}) \geq 2^{-4}\Lambda(\mathcal{A}_i);
\end{equation*}
this has the desired property for the iteration.
\end{enumerate}
In view of the lower bound on $\alpha_i$ we see that the density doubles in
\begin{equation*}
F(\alpha)=O(\alpha^{-1}\log^{-1/6}\alpha^{-1}\log \log^{5/3}\alpha^{-1})
\end{equation*}
steps, whence the iteration must terminate in at most $F(\alpha)+F(2\alpha)+F(2^2\alpha)+\dots$ steps. Of course $F(2\alpha') \leq F(\alpha')/\sqrt{2}$ whenever $\alpha' \in (0, c_0]$ for some absolute constant $c_0$. Thus, on summing the geometric progression we see that the iteration terminates in $O(F(\alpha))$ steps. It follows that at the time of termination we have
\begin{equation*}
\Lambda(A) \geq \exp(-O(\alpha^{-1}\log^{-1/6}\alpha^{-1}\log \log^{5/3}\alpha^{-1}))\Lambda(\mathcal{A}_i),
\end{equation*}
and we get the result.
\end{proof}

\section{Concluding remarks}\label{sec.obstacles}

No doubt some improvement could be squeezed out of our arguments by more judicious averaging but there is a natural limit placed on the method by Corollary \ref{cor.bsgcor} and it seems that to move the $1/6$ in Theorem \ref{thm.weighted} past $1$ would require a new idea. This, however, is a little frustrating for the following reason.

The well-known Erd{\H o}s-Tur{\'a}n conjecture is essentially equivalent to asking for Roth's theorem in $\Z/N\Z$ for any set of density $\delta(N)$ where $\delta(N)$ is a function with $\sum_{N}{N^{-1}\delta(N)} = \infty$. In particular, $\delta(N) = 1/\log N\log\log N\log\log\log N$ satisfies this hypothesis and so to have the analogue of the Erd{\H o}s-Tur{\'a}n conjecture in $\Z_4^n$ we would need to push the constant $1/6$ past $1$.

In light of the heuristic in \S\ref{sec.out} one might reasonably conjecture the following much stronger result.
\begin{conjecture}
Suppose that $G=\Z_4^n$ and $A \subset G$ contains no proper three-term arithmetic progressions. Then $|A|=O(|G|/\log^{3/2} |G|)$.
\end{conjecture}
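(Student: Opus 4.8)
The plan is to turn the heuristic of \S\ref{sec.out} into a proof of the analytic statement
\[
\Lambda(A) \geq \exp(-O(\alpha^{-2/3}))
\]
for every $A \subset \Z_4^n$ of density $\alpha$; the same rearrangement that deduces Theorem \ref{thm.maintheorem} from Theorem \ref{thm.weighted}, now solving $\exp(-C\alpha^{-2/3}) > |G|^{-1/2}$, then gives $|A| = O(|G|/\log^{3/2}|G|)$. Passing to families (\S\ref{sec.families}) it suffices to prove $\Lambda(\mathcal{A}) \geq \exp(-O(\alpha^{-2/3}))$ for an arbitrary family $\mathcal{A}$ of density $\alpha$, and this one attacks by the standard iteration: at each stage one either certifies that $\Lambda(\mathcal{A})$ is already this large, or produces a subgroup $H' \leq H$ of bounded index and a family $\mathcal{A}'$ on $H'$ with $\Lambda(\mathcal{A}) \geq 2^{-O(1)}\Lambda(\mathcal{A}')$ and $\P_{H'}(\mathcal{A}') \geq \P_H(\mathcal{A})(1 + \Omega(\P_H(\mathcal{A})^{2/3}))$. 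Such an increment doubles the density in $O(\alpha^{-2/3})$ steps, so the iteration halts after $O(\alpha^{-2/3})$ steps on a subgroup of index $\exp(O(\alpha^{-2/3}))$, which is exactly the budget producing the claimed bound.

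A single step is easy unless the density function is essentially flat. Writing $\Lambda(\mathcal{A}) = \E_{h}\sum_{\gamma}|\wh{1_{A_h}}(\gamma)|^2\wh{f_\mathcal{A}}(\gamma)\gamma(h)$ and isolating the trivial mode $\alpha\|f_\mathcal{A}\|_{L^2(H)}^2$, either $\Lambda(\mathcal{A}) \geq \alpha\|f_\mathcal{A}\|_{L^2(H)}^2/2 \geq \alpha^3/2$ -- already beating the target -- or $\sup_{\gamma \neq 0_{\wh{H}}}|\wh{f_\mathcal{A}}(\gamma)| > \|f_\mathcal{A}\|_{L^2(H)}^2/2$, in which case Lemma \ref{lem.coreincrement2} yields a density increment of size $\|f_\mathcal{A}\|_{L^2(H)}^2/2$; equivalently one may apply Proposition \ref{prop.largeL} to a heavy dyadic level $f_\mathcal{A} = \delta 1_S$ of the density function. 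Either way, when $\|f_\mathcal{A}\|_{L^2(H)}^2 \geq \alpha^{5/3}$ the increment is $\Omega(\alpha^{5/3})$, i.e.\ $\alpha \mapsto \alpha(1 + \Omega(\alpha^{2/3}))$, and the step is done. The genuine difficulty is the case $\|f_\mathcal{A}\|_{L^2(H)}^2 < \alpha^{5/3}$, with $f_\mathcal{A}$ too flat for either device above to guarantee an increment better than $\Omega(\alpha^2)$ (in the extreme $f_\mathcal{A}$ is constant at height $\alpha$, and one is then stuck at the Roth--Meshulam--Lev rate); here the whole $\Lambda$-count must be extracted from the structure of the fibres.

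In the flat case the only route I see is the energy argument behind Lemma \ref{lem.mainitlem}: peel off the trivial mode, and if $\Lambda(\mathcal{A})$ is not already large, localise the value of $f_\mathcal{A}$ dyadically and run the Plancherel and $L^4$ manipulations to exhibit a set of fibres, of density $\gtrsim \alpha^{1/3}$, on each of which the additive energy $\|1_{A_h} \ast 1_{A_h}\|_{L^2(H)}^2$ is a genuine constant multiple $c\,f_\mathcal{A}(h)^3$ of its minimal value -- and then feed these fibres into Lemma \ref{prop.highnrg}. For the conjecture one needs this to finish with either $\Lambda(\mathcal{A}) \geq \exp(-O(\alpha^{-2/3}))$ or an increment of size $\Omega(\alpha^{5/3})$; following the constants, that forces the energy parameter $c$ to be $\Omega(1)$, in which case Lemma \ref{prop.highnrg} together with Proposition \ref{prop.largeL} do give $\Lambda(\mathcal{A}) \geq \exp(-O(\log \alpha^{-1})) \geq \exp(-O(\alpha^{-2/3}))$. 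But the $L^4$ argument only delivers $c$ of size $1/\mathrm{polylog}(\alpha^{-1})$, and it is here that everything is lost.

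The precise bottleneck, and the step I expect to be the real obstacle, is the exponential dependence in the Balog--Szemer{\'e}di--Gowers--Fre{\u\i}man theorem: by Theorem \ref{thm.bsg} and Corollary \ref{cor.bsgcor}, to find the subgroup on which a fibre of relative additive energy $c$ is dense one loses a factor $\exp(-O(c^{-1}\log c^{-1}))$ in that subgroup's density, and since the quasi-randomness level $\epsilon$ that Corollary \ref{cor.bsgcor} must additionally supply has to be taken comparable to $c$, with $c$ only of polylogarithmic size this $\exp(\mathrm{polylog}(\alpha^{-1}))$ loss exactly cancels whatever one would gain by lowering the energy threshold. As the paper observes, even Marton's conjecture would not repair this, $\epsilon$ and $c$ being comparable. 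To reach exponent $3/2$ one therefore seems to need a structurally different treatment of the flat case: a proof that the per-fibre ``bad'' characters cluster in a common subspace of only polynomial codimension -- so that one subgroup of index $\exp(\mathrm{poly}(\alpha^{-1}))$, rather than the far larger index the current fibre-by-fibre application forces, serves almost all fibres simultaneously -- or an additive-energy increment losing only polynomially in its parameters, or an argument exploiting the structure of $\Z_4^n$ specifically (the squaring map and the self-pairing $\Im 2 = \ker 2$) that the family formalism currently discards. In its absence the exponent $1/6$ in Theorem \ref{thm.weighted} cannot, as the authors observe, be pushed past $1$.
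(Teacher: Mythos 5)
The statement you are addressing is not a theorem of the paper but its closing \emph{conjecture} (\S\ref{sec.obstacles}): the paper proves only the $\log^{1/6}$-type gain of Theorem \ref{thm.weighted} and explicitly says that moving the exponent past $1$ -- let alone reaching the $3/2$ of the conjecture -- would require a new idea. Your proposal, to its credit, is honest about this: the reduction to families, the budget count ($\Omega(\alpha^{5/3})$ increments or $\Lambda(\mathcal{A})\geq\exp(-O(\alpha^{-2/3}))$, iterated $O(\alpha^{-2/3})$ times), and the treatment of the large mean-square case via the trivial-mode dichotomy and Lemma \ref{lem.coreincrement2} are all sound. But the flat case $\|f_\mathcal{A}\|_{L^2(H)}^2<\alpha^{5/3}$, which is the entire content of the problem, is not closed, and you say so yourself. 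The obstruction you name is precisely the real one: Lemma \ref{lem.mainitlem}'s $L^4$/dyadic localisation only certifies relative fibre energy $c$ of polylogarithmic size, and synchronising the fibres through Corollary \ref{cor.bsgcor} and Lemma \ref{prop.highnrg} then costs a subgroup index $\exp(O((c^{-1}+K^{1/2}c^{-1/2})\log c^{-1}))$; the choice $C_{\mathcal{S}}L_i^3\log^2 L_i=\tfrac12\log\alpha_i^{-1}$ in \S\ref{sec.end} is exactly the balancing of this loss against the increment $L\alpha^2/4K$, and it is what caps the method at $\log^{1/6}$. Since $\epsilon$ in Corollary \ref{cor.bsgcor} must be taken comparable to $\sqrt{c/K}$, even a polynomial Fre{\u\i}man--Ruzsa statement would not remove the loss, as the paper notes.

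So the verdict is a genuine gap, not a fixable slip: what you have written is an accurate diagnosis of why the conjecture is open, together with a wish list (bad characters of distinct fibres clustering in a common subspace of polynomial codimension, or an energy-increment device with only polynomial losses, or a genuinely $\Z_4^n$-specific argument) for which no construction or proof is offered. None of the paper's machinery -- nor anything in your proposal -- supplies that missing ingredient, and without it the iteration cannot produce increments of size $\Omega(\alpha^{5/3})$ in the flat case, which is what the exponent $3/2$ requires. If you wish to salvage something rigorous from your write-up, it is a proof of the weaker Theorem \ref{thm.weighted} along the paper's own lines, not of the conjecture.
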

Of course it may well be that much more is true. We were able to find the following lower bound; as with $\Z_3^n$ (where the best lower bound is due to Edel \cite{YE}, but see also \cite{YLJW}) its density is of power shape.
\begin{proposition}\label{prop.tr}
Suppose that $G=\Z_4^n$. Then there is a set $A \subset G$ with no proper three-term arithmetic progressions and $|A|=\Omega(|G|^{2/3})$.
\end{proposition}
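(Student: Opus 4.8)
The plan is to construct $A$ as a "cap-set-like" object adapted to the structure of $\Z_4^n$, exploiting the fact that in $\Z_4^n$ a triple $(x,y,z)$ with $x+z=2.y$ forces $x$ and $z$ to lie in the same coset of $\Im 2 = \ker 2$. First I would recall the discussion in \S\ref{sec.families}: a set $A$ is encoded by a family $\mathcal{A}=(A_y)_{y\in\Im 2}$ of subsets of $\ker 2\cong\Z_2^m$ (where $m=n$), and the proper progressions of $A$ correspond, via (\ref{eqn.ke}), to quadruples $(a,a',y,h)$ with $a,a'\in A_h$, $y\in A_{a+a'-h}$, subject to the non-degeneracy condition that the resulting progression in $\Z_4^n$ is proper. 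So it suffices to build a family on $\Z_2^m$ of total density $\Omega(2^{-m/3})$ (giving $|A|=\Omega(4^n/ (4^n)^{1/3})=\Omega(|G|^{2/3})$) such that the only quadruples of the above type are the degenerate ones.

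The natural approach is to take the family to be supported on a single fibre, or rather to take $A$ to be a single "affine-free" set sitting over a thin base. Concretely, pick a subset $B\subseteq\Z_2^m$ of size $\Omega(2^{2m/3})$ that is a Sidon-type / $B_2^+$ set, i.e.\ containing no solutions to $b_1+b_2=b_3+b_4$ with $\{b_1,b_2\}\neq\{b_3,b_4\}$; such sets of size $\Omega(2^{m/2})$ certainly exist, but we actually want something of size $2^{2m/3}$, so instead one should think multiplicatively. The cleaner route: identify $\Z_4^n$ with $(\Z_4)^n$ and take $A=\{x\in\Z_4^n : x \text{ has exactly } n/3 \text{ coordinates equal to } 1,\ n/3 \text{ equal to } 3,\ n/3 \text{ equal to } 0\}$ or some similar "sphere" construction, chosen so that $x+z=2.y$ with $x,y,z\in A$ forces $x=z$. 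The key computation is: if $x,z\in A$ and $x+z\in\Im 2=\{0,2\}^n$, then in every coordinate $x_i+z_i\in\{0,2\}$, i.e.\ $x_i=z_i$ or $\{x_i,z_i\}=\{1,3\}$ or $\{x_i,z_i\}=\{0,2\}$; controlling the coordinate-profile of $A$ so that the middle case cannot occur except trivially forces $x=z$, killing all proper progressions. Optimizing the profile parameters to maximize $|A|$ against this constraint yields the exponent $2/3$, exactly as in the classical $\Z_3^n$ constructions of Behrend/Edel type.

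The key steps, in order, are: (i) reduce, via \S\ref{sec.families}, to constructing a family on $\Z_2^m$ with total density $\Omega(2^{-m/3})$ and no nondegenerate quadruples; (ii) write down the sphere-type set $A\subseteq\Z_4^n$ with a prescribed coordinate profile; (iii) verify that $x+z=2.y$, $x,y,z\in A$ implies $x=z$ (the progression is then of the form $(x,y,x)$, hence not proper), by a coordinate-by-coordinate case analysis; (iv) count $|A|$ via a multinomial coefficient and apply Stirling to get $|A|=\Omega(|G|^{2/3})$ after choosing the profile fractions optimally. The main obstacle will be step (iii)--(iv) together: one must choose the coordinate profile so that the arithmetic constraint is genuinely incompatible with two distinct points of $A$, \emph{and} simultaneously so that $|A|$ is as large as $|G|^{2/3}$; getting both at once is the whole content of the proposition, and it is plausible the author instead uses a known cap-set-type lower bound in a related group (e.g.\ pulling back an affinely-independent set, or a tensor-power construction iterating a small base case) rather than a bare-hands sphere, since tensor powers automatically give power-type density. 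If the direct sphere bound falls short of $2/3$, I would fall back on a product construction: find a small $\Z_4^k$ with a proper-progression-free set of relative density $\rho$, then take $k$th-power-style products to get density $\rho^{n/k}=|G|^{\log\rho/(k\log 4)}$, and choose the base case so that $\log\rho/(k\log 4)\geq -1/3$.
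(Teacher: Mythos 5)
Your proposal is correct in substance, but it goes by a genuinely different route than the paper. The paper's proof is a two-line tensor-power argument: it exhibits an explicit $16$-element set $A_0\subset\Z_4^3$ with no proper progressions (found by computer search) and observes that a product of proper-progression-free sets is proper-progression-free, so $A_0^{n/3}$ has size $16^{n/3}=|G|^{2/3}$. Your primary construction -- the ``sphere'' $A=\{x\in\Z_4^n:\ x$ has exactly $\lfloor n/3\rfloor$ coordinates equal to $0$, $\lfloor n/3\rfloor$ equal to $1$, and the rest equal to $3\}$ -- does in fact work, and the verification you leave as ``coordinate-by-coordinate case analysis'' closes cleanly: if $x+z=2.y$ with $x,y,z\in A$, then parity forces $x_i=0\Leftrightarrow z_i=0$, $y$ must vanish on those coordinates and also on every coordinate where $\{x_i,z_i\}=\{1,3\}$, and is nonzero elsewhere; since the number of zero coordinates of $y$ is prescribed, there are no coordinates of the latter type, so $x=z$ and the progression is degenerate. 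Counting by the multinomial coefficient this gives $|A|=\Omega(3^n/n)$, which is not merely $\Omega(|G|^{2/3})$ but essentially the stronger Koml{\'o}s--Chv{\'a}tal/Elsholtz bound $\Omega(|G|^{\log 3/\log 4}/\sqrt{\log|G|})$ quoted in the paper's concluding remarks (and indeed the paper's $A_0$ is essentially the $n=3$ sphere). So your approach trades the paper's minimal explicit base-case-plus-product argument for a global construction that costs a short case analysis but buys a better exponent. Minor blemishes: the opening detour through families and Sidon-type sets is unnecessary (and the density arithmetic there, $\Omega(2^{-m/3})$, is off -- the target density is $4^{-n/3}=2^{-2n/3}$); the comparison with Behrend/Edel and the claim that optimizing the profile ``yields the exponent $2/3$'' undersells what the sphere gives; and your fallback product construction is exactly the paper's method but is incomplete as stated, since its whole content is producing an explicit base set such as the paper's $A_0$.
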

\begin{proof}
Note that the set $A_0$ containing the elements
\begin{equation*}
\begin{array}{cccc}
(0,0,0) & (0,0,1) & (0,1,0) & (0,1,2)\\
(0,2,1) & (0,2,2) & (1,0,0) & (1,0,2)\\
(1,2,0) & (1,2,2) & (2,0,1) & (2,0,2)\\
(2,1,0) & (2,1,2) & (2,2,0) & (2,2,1)\\
\end{array}
\end{equation*}
is a set of size $16$ in $\Z_4^3$ which contains no proper three-term arithmetic progressions. The result now follows on noting that the product of two sets not containing any proper three-term arithmetic progressions does, itself, not contain any proper three-term arithmetic progressions:

Suppose that $B$ and $C$ are such sets and $(x_0,x_1),(y_0,y_1),(z_0,z_1) \in B\times C$ have $x+y=2.z$. Then $x_i+y_i=2.z_i$ for $i \in \{0,1\}$. However since $B$ and $C$ do not contain any proper progressions we have that $x_i=y_i$ for all $i \in \{0,1\}$ whence $x=y$ and so the progression is not proper.
\end{proof}
We are unaware of any serious search for better choices of $A_0$, although such no doubt exist.  Indeed,  recently Elsholtz observed that a more general construction designed for Moser's cube problem may be used.

Moser asked for large subsets of $\{0,1,2\}^n$ not containing three points on a line; Koml{\'o}s and Chv{\'a}tal \cite{VC} note that the sets
\begin{equation*}
S_n:=\{x \in \{0,1,2\}^n: x_i=1 \textrm{ for } \lfloor n/3 \rfloor \textrm{ values of } i \in [n]\},
\end{equation*}
have size $\Omega(3^n/\sqrt{n})$ by Stirling's formula and satisfy Moser's requirement.  Our set $A_0$ is equal to $S_3$.  Embedding $S_n$ in $\Z_4^n$ in the obvious way it may be checked that the lack of lines in $S_n$ yields a set containing no proper three-term arithmetic progressions and hence the following theorem.
\begin{theorem}[{\cite[Theorem 3]{CE}}] 
Suppose that $G=\Z_4^n$. Then there is a set $A \subset G$ with no proper three-term arithmetic progressions and
\begin{equation*}
|A|=\Omega(|G|^{\log 3/\log 4}/\sqrt{\log |G|}).
\end{equation*}
\end{theorem}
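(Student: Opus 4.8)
The plan is to take the set $S_n$ just recalled, embed it in $\Z_4^n$ via the obvious injection $\{0,1,2\}\hookrightarrow\Z_4$ on each coordinate, and verify two things about the image: that it contains no proper three-term progression, and that it is large. Write $m:=\lfloor n/3\rfloor$, so that $S_n$ is the set of $x\in\{0,1,2\}^n$ having exactly $m$ coordinates equal to $1$, and $|S_n|=\binom{n}{m}2^{n-m}$.

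For the progression-free property, suppose $x,y,z\in S_n$ (now regarded as elements of $\Z_4^n$) satisfy $x+z=2.y$; the claim will be that $x=z$, so that the triple $(x,y,z)$ is not proper. The heart of the matter is a coordinate-wise analysis: for each $i$ one has $x_i+z_i\equiv 2y_i\pmod 4$ with $x_i,y_i,z_i\in\{0,1,2\}$, and enumerating the solutions shows that $(x_i,y_i,z_i)$ must be one of
\[
(0,0,0),\quad(1,1,1),\quad(2,2,2),\quad(0,1,2),\quad(2,1,0),\quad(2,0,2),\quad(0,2,0).
\]
(The last two arise only because $2+2\equiv 0$ in $\Z_4$ and have no analogue for genuinely collinear triples over $\Z$; they cause no difficulty here since $x_i=z_i$ in both.) Now count coordinates equal to $1$: the only triple above with $x_i=1$ is $(1,1,1)$, so the number of $i$ with $x_i=1$ equals the number $t$ of coordinates of type $(1,1,1)$; since $x\in S_n$ this forces $t=m$. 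On the other hand $y_i=1$ precisely for the triples $(0,1,2)$, $(1,1,1)$ and $(2,1,0)$, so the number of $i$ with $y_i=1$ is $t+s$, where $s$ counts the coordinates of type $(0,1,2)$ or $(2,1,0)$; since $y\in S_n$ this forces $t+s=m$. Hence $s=0$, so every coordinate is of one of the five types in which $x_i=z_i$, and thus $x=z$. In particular $2.(y-x)=z-x=0_G$, so the triple is degenerate and the image of $S_n$ contains no proper three-term arithmetic progression.

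For the size, Stirling's formula gives $\binom{n}{m}=\Theta\!\left(3^n 2^{-2n/3}/\sqrt n\right)$ (using that the binary entropy of $1/3$ equals $\log_2 3-2/3$), and hence $|S_n|=\binom{n}{m}2^{n-m}=\Theta(3^n/\sqrt n)$. It remains to recast this in terms of $|G|=4^n$: since $n=\log|G|/\log 4$ we have $\sqrt n=\Theta(\sqrt{\log|G|})$, while $3^n=(4^n)^{\log 3/\log 4}=|G|^{\log 3/\log 4}$. Therefore the image $A$ of $S_n$ satisfies $|A|=\Omega(|G|^{\log 3/\log 4}/\sqrt{\log|G|})$, as required.

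The only genuine work is the bookkeeping in the middle paragraph: one must notice that reducing modulo $4$ introduces the two extra coordinate patterns $(2,0,2)$ and $(0,2,0)$ beyond those available for honest lines over $\Z$, and then observe that the single constraint ``exactly $m$ ones'' — applied to $x$ and to $y$ — already eliminates the only patterns, namely $(0,1,2)$ and $(2,1,0)$, on which $x$ and $z$ can disagree. Everything else is Stirling's formula and the identity $\log(4^n)=n\log 4$. This argument is, of course, exactly Elsholtz's observation that the Koml\'os--Chv\'atal sets for Moser's cube problem transfer to $\Z_4^n$.
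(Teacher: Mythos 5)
Your proof is correct, and it uses the same construction and the same Stirling estimate as the paper; the only difference lies in how the progression-freeness of the embedded set $S_n$ is established. The paper (following Elsholtz) states that ``the lack of lines in $S_n$'' yields the absence of proper three-term progressions and defers the verification to the cited paper, whereas you verify it directly from the defining constraint that every element of $S_n$ has exactly $\lfloor n/3\rfloor$ coordinates equal to $1$, via the list of seven coordinate patterns solving $x_i+z_i\equiv 2y_i \pmod 4$. This is worth pointing out because the transfer from Moser line-freeness is genuinely not automatic: for instance $\{(0,0),(1,2),(2,0)\}\subset\{0,1,2\}^2$ contains no three points on a geometric line, yet its image in $\Z_4^2$ satisfies $(0,0)+(2,0)=2.(1,2)$ and so contains a proper progression, precisely because of the wrap-around pattern $(0,2,0)$ that you isolate. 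Thus any complete argument must deal with the patterns $(0,2,0)$ and $(2,0,2)$, and your counting of the coordinates equal to $1$ in $x$ and in $y$ does exactly this, making your write-up a self-contained substitute for the check the paper leaves to Elsholtz; the size computation ($|S_n|=\Theta(3^n/\sqrt n)$, then $3^n=|G|^{\log 3/\log 4}$ and $\sqrt n=\Theta(\sqrt{\log|G|})$) matches the paper's.
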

The reader may wish to know that $\log 3/\log 4 = 0.792\dots$. The details along with some other results and generalisations are supplied in Elsholtz's paper.

\section*{Acknowledgments}
The author would like to thank Ernie Croot for a number of very useful conversations, Christian Elsholtz for supplying the preprint \cite{CE}, Olof Sisask for writing a program to find the example in Proposition \ref{prop.tr}, Terry Tao for useful comments and two anonymous referees for useful comments and careful reading.

\bibliographystyle{alpha}

\bibliography{master}

\end{document}